\tikzset{>=latex}
\def\FF {{\mathbb F}}     %% field
\def\QQ {{\mathbb Q}}     %% rationals
\def\RR {{\mathbb R}}     %% real numbers
\def\ZZ {{\mathbb Z}}     %% integers
\def\ring#1{\ifmmode \mathaccent'027 #1\else \rm\accent'027 #1\fi}
\def\mc {\mathcal}
\def\Ob {\mathrm{Ob}}
\def\Hom {\mathrm{Hom}}
\def\Aut {\mathrm{Aut}}
\def\Ext {\mathrm{Ext}}
\mathchardef\mhyphen="2D
\newtheorem{theorem}{Theorem}[section]
\newtheorem{lemma}[theorem]{Lemma}
\newtheorem{prop}[theorem]{Proposition}
\newtheorem{coro}[theorem]{Corollary}
\theoremstyle{plain}
\newtheorem{remark}[theorem]{Remark}
\theoremstyle{definition}
\newtheorem{df}[theorem]{Definition}
\theoremstyle{plain}
\numberwithin{equation}{section}
\newlength{\miniwidth}
\newcommand{\Addresses}{{% additional braces for segregating \footnotesize
  \bigskip
  \footnotesize

	(F.~Haiden) \textsc{University of Oxford, Mathematical Institute, Andrew Wiles Building, Woodstock Road, Oxford OX2 6GG, UK} \par\nopagebreak
	\textit{E-mail:} \texttt{Fabian.Haiden@maths.ox.ac.uk}
	\medskip
	
}}
\begin{document}

\title{Flags and tangles}
\author{Fabian Haiden}
\date{}
\maketitle

\begin{abstract}
We show that two constructions yield equivalent braided monoidal categories.
The first is topological, based on Legendrian tangles and skein relations, while the second is algebraic, in terms of chain complexes with complete flag and convolution-type products.
The category contains Iwahori--Hecke algebras of type $A_n$ as endomorphism algebras of certain objects.
\end{abstract}

\setcounter{tocdepth}{2}
\tableofcontents

\section{Introduction}

Braided monoidal categories~\cite{joyal_street93} provide a unifying framework for low-dimensional topology and representation theory.
Categories of tangles play a fundamental role, being freely generated among braided monoidal categories with certain extra properties~\cite{freyd_yetter89}.
Linearizations of the category of tangles, the passage from classical topology to ``quantum topology'', come from HOMFLY-PT skein theory. 
The novelty of this work is to consider \textit{legendrian} tangles and skein relations to obtain an analog of Turaev's Hecke category~\cite{turaev90}.
Our main result provides a very different description of this category in terms of complete flags on chain complexes and convolution-type products.

\subsection*{Prelude: Iwahori--Hecke algebras of type $A_n$.}

To motivate our later considerations we begin by recalling several points of view (algebraic and topological) on Iwahori--Hecke algebras of type $A_n$, denoted by $\mc H_n$.
More details may be found in several places, e.g. the textbook~\cite{kassel_turaev}.

Suppose we want to find a $q$-deformation of (the group algebra of) the symmetric group $S_{n+1}$.
The $q$-analog of a set $X$ with $n+1$ elements is a vector space $V$ of dimension $n+1$ over a finite field $\FF_q$, and the $q$-analog of a total order on $X$ is a complete flag on $V$.
Let $\QQ Fl(V)$ be the $\QQ$-vector space with basis the set of complete flags on $V$. 
The $q$-analog of the transposition $(i,i+1)\in S_{n+1}$ is the linear operator $T_i$ on $\QQ Fl(V)$, $i=1,\ldots,n$, which takes a complete flag
\[
0=V_0\subset V_1\subset \ldots\subset V_i\subset\ldots\subset V_{n+1}=V
\]
to the sum of the $q$ flags
\[
0=V_0\subset V_1\subset \ldots\subset V_i'\subset\ldots\subset V_{n+1}=V
\]
where $V_i'$ is a subspace with $V_{i-1}\subset V'\subset V_{i+1}$ and $V_i'\neq V_i$.
Thus, $T_i$ is a sort of \textit{random modification at the $i$-th step}.
It is easy to see that the relations
\begin{align}
T_iT_{i+1}T_i&=T_{i+1}T_iT_{i+1},\qquad i=1,\ldots,n-1 \label{hr1} \\
T_iT_j&=T_jT_i,\qquad\qquad\quad |i-j|>1 \label{hr2} \\
T_i^2&=(q-1)T_i+q,\qquad i=1,\ldots,n \label{hr3} 
\end{align}
hold, and these turn out to be a complete set of relations for the subalgebra of $\mathrm{End}(\QQ Fl(V))$ generated by the $T_i$.
Since the relations are polynomial in $q$, we can also treat $q$ as a formal parameter and define $\mc H_n$ over $\ZZ[q^{\pm 1}]$, as is usually done.
Setting $q=1$ gives the group algebra of the symmetric group $S_n$, and the definition by generators and relations generalizes to arbitrary Coxeter groups.

The fact that \eqref{hr1} and \eqref{hr2} are the defining relations of the braid group suggests that we represent elements of $\mc H_n$ by braid diagrams with $n+1$ strands, where $T_i$ corresponds to the braid with a single positive crossing of the $i$-th and $i+1$-st strand.
The relation \eqref{hr3} is then equivalent, modulo the second Reidemeister move, to the \textit{skein relation}
\begin{equation*}
\begin{tikzpicture}[baseline=-\dimexpr\fontdimen22\textfont2\relax,scale=.6]
\draw[thick,->] (1,-1) to (-1,1);
\draw[white,line width=2mm] (-1,-1) to (1,1);
\draw[thick,->] (-1,-1) to (1,1);
\draw[dashed] (0,0) circle [radius=1.414];
\end{tikzpicture}%
-q\;
\begin{tikzpicture}[baseline=-\dimexpr\fontdimen22\textfont2\relax,scale=.6]
\draw[thick,->] (-1,-1) to (1,1);
\draw[white,line width=2mm] (-1,1) to (1,-1);
\draw[thick,->] (1,-1) to (-1,1);
\draw[dashed] (0,0) circle [radius=1.414];
\end{tikzpicture}%
=(q-1)\;
\begin{tikzpicture}[baseline=-\dimexpr\fontdimen22\textfont2\relax,scale=.6]
\draw[thick,->] (-1,-1) to (-.62,-.62) to [out=45,in=-45] (-.62,.62) to (-1,1);
\draw[thick,->] (1,-1) to (.62,-.62) to [out=135,in=-135] (.62,.62) to (1,1);
\draw[dashed] (0,0) circle [radius=1.414];
\end{tikzpicture}%
\end{equation*}
which explains the relevance of $\mc H_n$ to knot theory, c.f. Jones~\cite{jones87}.

The purpose of this work is to extend the above from vector spaces to $\ZZ$-graded chain complexes of vector spaces.
Instead of defining a single algebra, it is natural to construct a braided monoidal category $\mc H$, see below.
It turns out that this category has a topological interpretation, though somewhat surprisingly we need to use Legendrian tangles instead of ordinary (topological) tangles.
The use of Legendrian tangles was suggested by the work by the author, where homomorphisms are constructed from Legendrian skein algebras to Hall algebras of Fukaya categories~\cite{haiden_skeinhall}.
This text is, for the most part, logically independent from \cite{haiden_skeinhall}.
We anticipate that the braided monoidal categories defined here will be useful in gluing ``frozen'' variants of the skein algebras and Hall algebras from \cite{haiden_skeinhall}.

\subsection*{The monoidal category $\mc H$}

Let us first describe the construction in terms of flags on chain complexes.
Fix a finite ground field $\mathbf k=\FF_q$, then the monoidal category $\mc H$ is defined as follows.
\begin{itemize}
\item
\textbf{Objects:} Finite-dimensional $\ZZ$-graded vector spaces $V=\bigoplus_{i\in\ZZ}V_i$ over $\mathbf k$ together with a complete flag of graded subspaces $F_iV\subseteq V$, $i=0,\ldots,\dim V$, $\dim F_iV=i$.
\item
\textbf{Monoidal product of objects:} $(V,F_\bullet V)\otimes(W,F_\bullet W)$ has underlying graded vector space $V\oplus W$ and complete flag
\begin{equation}\label{flag_on_direct_sum}
0=F_0V\subset F_1V\subset\cdots\subset F_{\dim V}V\subset V\oplus F_1W\subset\ldots\subset V\oplus F_{\dim W}W=V\oplus W.
\end{equation}
\item
\textbf{Morphisms:} $\Hom_{\mc H}(V,W)$ is the $\QQ$-vector space with basis the set $B(V,W)$ of equivalence classes of triples $(d_V,f,d_W)$ where $d_V$ is a differential (of degree 1) on $V$ with $d_V(F_iV)\subseteq F_{i-1}V$, $d_W$ is a differential on $W$ with $d_W(F_iW)\subseteq F_{i-1}W$, and $f:V\to W$ is a quasi-isomorphism.
Two triples $(d_V,f,d_W)$ and $(d_V',f',d_W')$ are equivalent if there are isomorphisms of graded vector spaces $\phi:V\to V'$, $\psi:W\to W'$ which preserve the flags and differentials, and such that $\psi\circ f$ and $f'\circ\phi$ are homotopic (equivalently: induce the same map on cohomology).
\item
\textbf{Composition:}
If $U,V,W\in\mathrm{Ob}(\mc H)$ and $(d_U,f,d_V)$ represents a morphism $U\to V$ and $(d_V',g,d_W)$ represents a morphism $V\to W$ then their composition is defined as
\begin{equation}
\label{h_comp}
(d_V',g,d_W)\circ(d_U,f,d_V):= \left(\prod_{i=1}^{\infty}\left|\Hom^{-i}_{< 0}(V,V)\right|^{(-1)^i}\right)\sum_b\left(d_U,gbf,d_W\right)
\end{equation}
where $\Hom^{-i}_{< 0}(V,V)$ denotes homogeneous maps of degree $-i$ sending $F_jV$ to $F_{j-1}V$, and the sum runs over all isomorphisms of chain complexes with complete flag, $b:(V,d_V)\to (V,d_V')$, i.e. preserving the grading, the differential, and the flag.
Extend this product bilinearly to all morphisms.
The origin of this formula is discussed in Subsection~\ref{subsec_comp_conceptual}.
\item
\textbf{Monoidal product of morphisms:}
This is essentially the Hall algebra product for dg-categories, see Subsection~\ref{subsec_hall}.
If $U,V,X,Y\in\mathrm{Ob}(\mc H)$, $(d_U,f,d_V)$ represents a morphism $U\to V$, and $(d_X,g,d_Y)$ represents a morphism $X\to Y$ then
\begin{align*}
(d_U,f,d_V)\otimes (d_X,g,d_Y):=&\left(\prod_{i=0}^{\infty}\left(\left|\Hom^{-i}(X,U)\right|\left|\Hom^{-i}(Y,V)\right|\left|\Hom^{-i-1}(X,V)\right|\right)^{(-1)^{i+1}}\right) \\
&\quad\cdot\sum_{\delta}\left(\begin{bmatrix} d_U & \delta_{11} \\ 0 & d_X \end{bmatrix},\begin{bmatrix} f & \delta_{12} \\ 0 & g \end{bmatrix},\begin{bmatrix} d_V & \delta_{22} \\ 0 & d_Y \end{bmatrix} \right)\nonumber
\end{align*}
where the sum is over 
\begin{equation*}
\delta=\left(\delta_{11},\delta_{12},\delta_{22}\right)\in\Hom^1(X,U)\oplus\Hom^0(X,V)\oplus\Hom^1(Y,V)
\end{equation*}
with
\begin{gather}
d_U\delta_{11}+\delta_{11}d_X=0,\qquad d_V\delta_{22}+\delta_{22}d_Y=0,\qquad \delta_{12}d_X+f\delta_{11}=d_V\delta_{12}+\delta_{22}g.
\end{gather}
\end{itemize}

To relate this to the Iwahori--Hecke algebra of type $A_n$ we note that for the object of $\mc H$ given by a vector space $V=\mathbf k^{n+1}$ concentrated in degree zero and with the standard complete flag we have 
\[
B(V,V)=B\setminus GL(n+1,\mathbf k)/B=S_{n+1}
\]
where $B$ is the subgroup of upper triangular matrices and the second equality is the Bruhat decomposition.
Moreover, the algebra $\Hom_{\mc H}(V,V)$ is just $\mc H_n$, specialized to a prime power $q$.

\begin{prop}
\label{prop_h_monoidal}
$\mc H$ as defined above is a monoidal category.
\end{prop}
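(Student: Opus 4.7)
The plan is to check the axioms of a monoidal category in turn, noting that most of the work reduces to a single calculation. The key simplification is that on objects the monoidal product is literally $V \oplus W$ with a prescribed concatenation of flags, hence strictly associative with strict unit $0$; the associator and unitors can then be taken to be identities, and the pentagon and triangle coherence axioms become trivial. The remaining tasks are (i)~composition is well-defined on equivalence classes, associative, and unital; (ii)~$\otimes$ on morphisms is well-defined and associative; and (iii)~bifunctoriality of $\otimes$, namely the interchange law together with $\mathrm{id}_V \otimes \mathrm{id}_W = \mathrm{id}_{V \otimes W}$.

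For (i), write $C(V) := \prod_{i \geq 1} |\Hom^{-i}_{< 0}(V, V)|^{(-1)^i}$. Associativity of composition is manifest from the definition: both bracketings of $(d_W', h, d_X) \circ (d_V', g, d_W) \circ (d_U, f, d_V)$ expand to the same double sum $C(V) C(W) \sum_{b, c} (d_U, h c g b f, d_X)$ indexed by flag- and differential-preserving isomorphisms $b \colon (V, d_V) \to (V, d_V')$ and $c \colon (W, d_W) \to (W, d_W')$. Well-definedness on equivalence classes uses that $g$ is a chain map, so that $g b f \simeq g b f'$ whenever $f \simeq f'$, and that a substitution by flag/differential-preserving isomorphisms $\phi, \psi$ certifying equivalence reindexes the sum bijectively via $b \mapsto b \psi^{-1}$. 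The two-sided identity is
\[
\mathrm{id}_V \; = \; \frac{1}{C(V)} \sum_{[d]} \frac{1}{|\mathrm{Aut}(V, d)|} \, (d, \mathrm{id}_V, d),
\]
summed over isomorphism classes of flag-decreasing differentials, with $\mathrm{Aut}(V, d)$ the group of flag-, grading-, and differential-preserving automorphisms; upon composition only the term $[d] = [d_V]$ contributes, and the sum over $b$ collects the factor $|\mathrm{Aut}(V, d_V)|$ that exactly cancels the normalization. For (ii), well-definedness follows by the analogous substitution on the $\delta$-data, and associativity of $\otimes$ on morphisms is immediate from the block-upper-triangular formulas: both bracketings sum over nested extensions of the threefold direct sum with matching constraints, and the prefactors agree by multiplicativity of $|\Hom^{-i}|$ over direct sums.

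The main obstacle is (iii). Both sides of the interchange law unfold into iterated sums of differing shape: the left-hand side contains two inner composition sums over isomorphisms of the intermediate objects (call them $M$ and $M'$), wrapped by an outer extension sum over $\delta$-data on $M \oplus M'$; the right-hand side has two independent extension sums on source and target direct sums, wrapped by a single composition sum over flag/differential-preserving isomorphisms $b$ of $M \oplus M'$. The matching proceeds by decomposing such a $b$ according to the two-step flag $M \subset M \oplus M'$: its diagonal blocks are isomorphisms of $M$ and $M'$ individually, while its off-diagonal block is a degree-zero map $M' \to M$ that pairs, modulo the chain-map constraint, with the $\delta_{12}$ datum on the right. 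Comparing the two sides then reduces to a prefactor identity of Euler-form type, relating $C(M \oplus M')/(C(M) C(M'))$ to appropriate products of $|\Hom^{-i}(M', M)|$ appearing in the monoidal prefactor; the specific signs $(-1)^i$ and $(-1)^{i+1}$ are engineered precisely so that, after expanding $|\Hom^{-i}_{< 0}(M \oplus M', M \oplus M')|$ into components and telescoping, everything collapses. This is the Euler-form calculation for the Hall product on the dg category of chain complexes underlying the whole construction, as discussed in Subsections~\ref{subsec_comp_conceptual} and~\ref{subsec_hall}. The identity $\mathrm{id}_V \otimes \mathrm{id}_W = \mathrm{id}_{V \otimes W}$ is a parallel check: the expansion of $\mathrm{id}_V \otimes \mathrm{id}_W$ via the extension sum, grouped by isomorphism class of the induced differential on $V \oplus W$, reproduces the identity formula for $V \oplus W$ via the same prefactor comparison.
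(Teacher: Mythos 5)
Your treatment of composition is fine: your formula for $\mathrm{id}_V$ agrees with the paper's \eqref{h_id} after orbit--stabilizer counting over conjugacy classes of differentials, the unit and associativity checks for $\circ$ are correct, and your direct verification that $\otimes$ is associative on morphisms (both bracketings sum over the same block-upper-triangular data on the threefold sums, with prefactors matching by additivity of $\Hom$ over direct summands) is a legitimately more elementary route than the paper's appeal to the general Hall-algebra formalism. The strictness remark disposing of the coherence axioms is also fine.

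The genuine gap is in (iii), which is exactly the step the paper regards as the most involved. (Incidentally you have swapped the two sides: it is $(\beta\otimes\delta)\circ(\alpha\otimes\gamma)$ that has the two inner extension sums wrapped by one composition sum over isomorphisms $b$ of $V\oplus Y$, and $(\beta\circ\alpha)\otimes(\delta\circ\gamma)$ that has two inner composition sums wrapped by one outer extension sum; this is cosmetic.) After decomposing $b$ into diagonal blocks $b_V,b_Y$ and an off-diagonal block $\beta\in\Hom^0(Y,V)$, the comparison is \emph{not} reducible to the formal identity $C(V\oplus Y)=C(V)C(Y)\prod_{i\ge1}|\Hom^{-i}(Y,V)|^{(-1)^i}$ plus telescoping. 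For fixed $b_V,b_Y$, one side is a sum over the data $(\delta^{(1)},\delta^{(2)},\beta)$ (two extension cocycles plus the off-diagonal block, coupled by a mixed chain-map constraint), the other over single extension cocycles $\epsilon$ for the composed maps; the assignment sending the former to the latter must be shown to be \emph{surjective with fibers of a constant cardinality equal to the prefactor ratio}. That is a cohomological statement, not bookkeeping: it fails for general chain maps and holds only because $f,g$ (and the $a_i$) are quasi-isomorphisms, which is what makes the restriction maps of $\Hom$-complexes $\Hom_{\mc D(A_4)}(X\to Y\to Y\to Z,\,U\to V\to V\to W)\to\Hom(X,U)\leftarrow\Hom_{\mc D(A_2)}(X\to Z,\,U\to W)$ quasi-isomorphisms. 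The paper's proof proceeds by showing the summands depend only on $\Ext^1$-classes (the $\mc D(A_4)$ analogue of Lemma~\ref{lem_monoidal_ext}, with the modified homotopy $f_2a_2h_{12}+f_2h_{23}g_1+h_{34}a_1g_1$), transporting classes through the isomorphism of $\Ext^1$'s induced by these quasi-isomorphisms, and then matching multiplicities via \eqref{mult_neg_euler}; some such argument is indispensable and is absent from your sketch. Your citation of Subsections~\ref{subsec_comp_conceptual} and~\ref{subsec_hall} does not close this: the Hall-algebra formalism there yields well-definedness and associativity of $\otimes$ and a conceptual origin for $\circ$, but the interchange law \eqref{hv_xchange} is precisely the extra compatibility between the two operations that is not a consequence of that formalism and must be proved by hand.
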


The proof of this proposition is completed in Subsection~\ref{subsec_h_monoidal}.
The equivalence with the category $\mc S|_q$ (see below) shows that $\mc H$ has a natural braiding.

\subsection*{The monoidal category $\mc S$}

The braided monoidal category $\mc S$ is defined in terms of graded Legendrian tangles.
Before giving the definition we recall some basic terminology from Legendrian knot theory.
A Legendrian curve $L$ in $\RR^3$ is an embedded curve which is everywhere tangent to the contact distribution $\mathrm{Ker}(dz-ydx)$.
The \textit{front projection} of $L$ is the image under the projection to the $(x,z)$-plane.
For generic compact Legendrian curve the front projection is locally the graph of a smooth function $z(x)$ except near a finite set of singular points which are either simple crossings $(\times)$, left cusps $(\prec)$, or right cusps $(\succ)$.
A \textit{graded Legendrian tangle} in $J^1[0,1]=[0,1]\times\RR^2\subset\RR^3$ is a compact graded Legendrian curve $L$ in $[0,1]\times\RR^2$ with boundary in $\{0,1\}\times\RR\times\{0\}$.
A \textit{grading} or \textit{Maslov potential} for a Legendrian curve is given, in terms of the front projection, by a labeling of the strands connecting cusps by integers such that at each cusp the number assigned to the lower strand is one greater than the number assigned to the upper strand.

We now come to the definition of $\mc S$.

\begin{itemize}
\item
\textbf{Objects:} Finite $\ZZ$-graded subsets $X$ of $\RR$ up to isotopy (A $\ZZ$-grading is simply a function $\deg:X\to\ZZ$.)
\item
\textbf{Monoidal product of objects:} $X\otimes Y$ is obtained by stacking $Y$ on top of $X$, which is well defined up to isotopy.
\item
\textbf{Morphisms:} $\Hom_{\mc S}(X,Y)$ is the module over $\ZZ[q^{\pm},(q-1)^{-1}]$ generated by isotopy classes of tangles $L$ with $\partial_0L=Y$ and $\partial_1L=X$ modulo the following skein relations, where $\delta_{m,n}$ is the Kronecker delta.
These are relations between tangles which are identical except inside a small ball where they differ as shown.
\begin{equation}\label{gs_1f}
\begin{tikzpicture}[baseline=-\dimexpr\fontdimen22\textfont2\relax,scale=.6]
\draw[thick] (-1.414,0) to [out=0,in=180] (0,-.5) to [out=0,in=180] (1.414,0);
\draw[line width=1mm,white] (1,1) to [out=-135,in=0] (0,0) to [out=0,in=135] (1,-1);
\draw[thick] (1,1) to [out=-135,in=0] (0,0) to [out=0,in=135] (1,-1);
\node[blue,above] at (-.8,-.2) {\scriptsize{n}};
\node[blue,left] at (.95,.85) {\scriptsize{m-1}};
\node[blue,left] at (.95,-.85) {\scriptsize{m}};
\draw[dashed] (0,0) circle [radius=1.414];
\end{tikzpicture}%
-q^{(-1)^{m-n}}\;
\begin{tikzpicture}[baseline=-\dimexpr\fontdimen22\textfont2\relax,scale=.6]
\draw[thick] (1,1) to [out=-135,in=0] (0,0) to [out=0,in=135] (1,-1);
\draw[line width=1mm,white] (-1.414,0) to [out=0,in=180] (0,.5) to [out=0,in=180] (1.414,0);
\draw[thick] (-1.414,0) to [out=0,in=180] (0,.5) to [out=0,in=180] (1.414,0);
\node[blue,below] at (-.8,.2) {\scriptsize{n}};
\node[blue,left] at (.95,.85) {\scriptsize{m-1}};
\node[blue,left] at (.95,-.85) {\scriptsize{m}};
\draw[dashed] (0,0) circle [radius=1.414];
\end{tikzpicture}%
=\delta_{m,n}(q-1)\;
\begin{tikzpicture}[baseline=-\dimexpr\fontdimen22\textfont2\relax,scale=.6]
\draw[thick] (-1.414,0) to [out=0,in=135] (1,-1);
\draw[thick] (1,1) to [out=-135,in=0] (0,.3) to [out=0,in=180] (1.414,0);
\node[blue,below] at (-.5,0) {\scriptsize{n}};
\node[blue,left] at (.95,.85) {\scriptsize{m-1}};
\node[blue,left] at (.95,-.05) {\scriptsize{m}};
\draw[dashed] (0,0) circle [radius=1.414];
\end{tikzpicture}%
-\delta_{m,n+1}(1-q^{-1})\;
\begin{tikzpicture}[baseline=-\dimexpr\fontdimen22\textfont2\relax,scale=.6]
\draw[thick] (-1.414,0) to [out=0,in=-135] (1,1);
\draw[thick] (1,-1) to [out=135,in=0] (0,-.3) to [out=0,in=180] (1.414,0);
\node[blue,below] at (-.7,.2) {\scriptsize{n}};
\node[blue,above] at (.8,-.2) {\scriptsize{m-1}};
\node[blue,left] at (.95,-.85) {\scriptsize{m}};
\draw[dashed] (0,0) circle [radius=1.414];
\end{tikzpicture}%
\tag{S1}
\end{equation}
\begin{equation}\label{gs_2f}
\begin{tikzpicture}[baseline=-\dimexpr\fontdimen22\textfont2\relax,scale=.6]
\draw[thick] (-1,0) to [out=0,in=180] (0,.6) to [out=0,in=180] (1,0) to [out=180,in=0] (0,-.6) to [out=180,in=0] (-1,0);
\draw[dashed] (0,0) circle [radius=1.414];
\end{tikzpicture}
=(q-1)^{-1}\;
\begin{tikzpicture}[baseline=-\dimexpr\fontdimen22\textfont2\relax,scale=.6]
\draw[dashed] (0,0) circle [radius=1.414];
\end{tikzpicture}
\tag{S2}
\end{equation}
\begin{equation}\label{gs_3f}
\begin{tikzpicture}[baseline=-\dimexpr\fontdimen22\textfont2\relax,scale=.6]
\draw[thick] (-1.414,0) to [out=0,in=-135] (.5,.5) to [out=-135,in=45] (-.5,-.5) to [out=45,in=180] (1.414,0);
\draw[dashed] (0,0) circle [radius=1.414];
\end{tikzpicture}
=0
\tag{S3}
\end{equation}
(The vertically flipped variant of \eqref{gs_1f} already follows from \eqref{gs_1f} and legendrian isotopy.)
\item
\textbf{Composition:} Horizontal composition (concatenation) of tangles
\item
\textbf{Monoidal product of morphisms:} Vertical composition (stacking) of tangles
\item
\textbf{Braiding:} The braiding morphism $X\otimes Y\to Y\otimes X$ is given by the following tangle (with grading determined by $X$ and $Y$).
We note that this is not an isomorphism in general, so gives a braiding only in a weak sense, i.e. representing the positive braid monoid only instead of the full braid group.
\begin{equation*}
\begin{tikzpicture}[scale=.6]
\draw[thick] (-3,-.5) to [out=0,in=180] (3,2.5);
\draw[thick] (-3,-.8) to [out=0,in=180] (3,2.2);
\draw[thick] (-3,-2.5) to [out=0,in=180] (3,.5);
\draw[white,line width=1mm] (-3,.5) to [out=0,in=180] (3,-2.5);
\draw[white,line width=1mm] (-3,2.5) to [out=0,in=180] (3,-.5);
\draw[white,line width=1mm] (-3,2.2) to [out=0,in=180] (3,-.8);
\draw[thick] (-3,.5) to [out=0,in=180] (3,-2.5);
\draw[thick] (-3,2.5) to [out=0,in=180] (3,-.5);
\draw[thick] (-3,2.2) to [out=0,in=180] (3,-.8);
\node at (-2.5,1.5) {$\vdots$};
\node at (-2.5,-1.5) {$\vdots$};
\node at (2.5,1.5) {$\vdots$};
\node at (2.5,-1.5) {$\vdots$};
\node at (0,0) {$\vdots$};
\draw (-3,-3) to (-3,3);
\draw (3,-3) to (3,3);
\node[left] at (-3,-1.5) {$Y$};
\node[left] at (-3,1.5) {$X$};
\node[right] at (3,1.5) {$Y$};
\node[right] at (3,-1.5) {$X$};
\end{tikzpicture}
\end{equation*}
\end{itemize}

The following proposition is automatic.

\begin{prop}
\label{prop_s_monoidal}
$\mc S$ as defined above is a braided monoidal category.
\end{prop}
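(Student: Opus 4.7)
The plan is to verify the axioms of a braided monoidal category in the order: category structure, monoidal structure, then braiding. Throughout, all verifications reduce to (a) standard facts about tangle categories, namely that Legendrian isotopy in $[0,1]\times\RR^2$ identifies horizontal and vertical compositions at different scales, and (b) the observation that each skein relation \eqref{gs_1f}, \eqref{gs_2f}, \eqref{gs_3f} is defined by local replacement inside a small ball, hence is compatible with any outside operation. This is what the author means by ``automatic''.

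First I would confirm that horizontal concatenation of graded Legendrian tangles descends to an associative composition on $\Hom_{\mc S}$-modules with identity the horizontal-strand tangle over $X$. Associativity and the identity property hold already at the level of isotopy classes after rescaling in the $x$-direction, and they survive the quotient by the skein module since the skein relations are local. Linearity in the coefficient ring $\ZZ[q^{\pm},(q-1)^{-1}]$ is immediate from the definition of $\Hom_{\mc S}$ as a free module modulo the skein submodule.

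Next, monoidal structure. Vertical stacking of $\ZZ$-graded subsets of $\RR$ is strictly associative and strictly unital (the empty set is the unit) up to isotopy, so the associator and unitors are identities and the pentagon and triangle are trivial. For morphisms, the interchange law $(f\otimes g)\circ(h\otimes k)=(f\circ h)\otimes(g\circ k)$ follows from the Legendrian isotopy that rearranges two horizontal pieces stacked vertically into the standard ``staircase'' form; locality of the skein relations again guarantees that the identification descends to the quotient. Naturality of the monoidal product in each argument is the same argument applied one side at a time.

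Finally, the braiding. I would check naturality of $\beta_{X,Y}$ by sliding any tangle representing a morphism on $X$ or $Y$ across the crossing block, which is a Legendrian isotopy. For the two hexagon axioms, the key identity $\beta_{X,Y\otimes Z}=(\mathrm{id}_Y\otimes\beta_{X,Z})\circ(\beta_{X,Y}\otimes\mathrm{id}_Z)$ and its mirror hold because the crossing block for $\beta_{X,Y\otimes Z}$ is Legendrian-isotopic to the two smaller crossing blocks on the right-hand side; the author's remark that $\beta_{X,Y}$ need not be invertible is precisely why no inverse-hexagon or Reidemeister-II-type input is required, and the structure is only that of a positive braided monoidal category. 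The main (and only) point requiring care is checking that these Reidemeister-III-type Legendrian isotopies are actually available for the particular crossing tangle drawn, with all Maslov potential labels respected; this is a direct front-projection calculation. No genuine obstacle arises.
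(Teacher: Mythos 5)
Your verification is correct and is exactly the routine check the paper has in mind when it calls the proposition ``automatic'': associativity/unitality and interchange from locality of the skein relations plus Legendrian isotopy, and naturality and the hexagons for the (non-invertible, hence only weak) braiding from sliding boxes through the crossing block via the Legendrian Reidemeister moves R2 and R3, which is legitimate since fronts have no vertical tangencies and the gradings are merely carried along. Since the paper supplies no written proof, your fleshed-out argument matches its intended reasoning and fills in the only point needing care, which you correctly identified.
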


The local description of front projections of Legendrian curves immediately implies that morphisms in $\mc S$ are generated by the following elementary tangles under horizontal and vertical composition.

\begin{equation}
\label{elem_tangles}
\begin{tikzpicture}[baseline=-\dimexpr\fontdimen22\textfont2\relax,scale=.6]
\draw[thick] (-1,0) to (1,0);
\node[above,blue] at (0,-.1) {\scriptsize{n}};
\node at (0,-1.5) {$1_n$};
\end{tikzpicture}
\qquad\qquad
\begin{tikzpicture}[baseline=-\dimexpr\fontdimen22\textfont2\relax,scale=.6]
\draw[thick] (1,.5) to [out=180,in=0] (-.5,0) to [out=0,in=180] (1,-.5);
\node[right,blue] at (.9,.5) {\scriptsize{n}};
\node[right,blue] at (.9,-.5) {\scriptsize{n+1}};
\node at (.25,-1.5) {$\lambda_n$};
\end{tikzpicture}
\qquad\qquad
\begin{tikzpicture}[baseline=-\dimexpr\fontdimen22\textfont2\relax,scale=.6]
\draw[thick] (-1,.5) to [out=0,in=180] (.5,0) to [out=180,in=0] (-1,-.5);
\node[left,blue] at (-.9,.5) {\scriptsize{n}};
\node[left,blue] at (-.9,-.5) {\scriptsize{n+1}};
\node at (-.25,-1.5) {$\rho_n$};
\end{tikzpicture}
\qquad\qquad
\begin{tikzpicture}[baseline=-\dimexpr\fontdimen22\textfont2\relax,scale=.6]
\draw[thick] (-1,-.5) to [out=0,in=180] (1,.5);
\draw[line width=1mm,white] (-1,.5) to [out=0,in=180] (1,-.5);
\draw[thick] (-1,.5) to [out=0,in=180] (1,-.5);
\node[right,blue] at (.9,.5) {\scriptsize{m}};
\node[right,blue] at (.9,-.5) {\scriptsize{n}};
\node at (0,-1.5) {$\sigma_{m,n}$};
\end{tikzpicture}
\end{equation}

A remark about conventions: 
In Legendrian knot theory tangles are drawn horizontally, with left and right boundary, as opposed to vertically.
The composition of morphisms $f:X\to Y$ and $g:Y\to Z$ is usually written in the order $g\circ f$.
Given tangles $f,g$, if we want $g\circ f$ to be the tangle with $g$ on the left and $f$ on the right, then the source of the tangle must be on the right and the target on the left, which is the convention adopted here.

\subsection*{Main result}

Fix a finite field $\mathbf k=\FF_q$ and consider $\QQ$ as an algebra over $R:=\ZZ[t^{\pm},(t-1)^{-1}]$ via the homomorphism $R\to\QQ$ which sends the formal variable $t$ to the prime power $q$.
Let $\mc S|_q$ be the category obtained from $\mc S$ by base change to $\QQ$.
Thus, $\mathrm{Ob}(\mc S|_q)=\mathrm{Ob}(\mc S)$ and morphisms in $\mc S|_q$ are $\QQ$-linear combinations of tangles modulo skein relations in which $q$ is a prime power.
A $\QQ$-linear functor of monoidal categories from $\mc S|_q$ is determined by its value on objects and elementary tangles. 
We claim that there is a functor $\Phi:\mc S|_q\to \mc H$ defined on objects by mapping a graded subset $X\subset \RR$ to the graded vector space $V$ with basis the elements of $X$, and filtration $F_iV:=\mathrm{Span}(x_1,\ldots,x_i)$ where $x_i$ is the $i$-th (smallest) element of $X$, and on elementary tangles as follows:
\begin{equation*}
\begin{tikzpicture}[baseline=-\dimexpr\fontdimen22\textfont2\relax,scale=.6]
\draw[thick] (-1,0) to (1,0);
\node[above,blue] at (0,-.1) {\scriptsize{n}};
\end{tikzpicture}
\qquad \mapsto\qquad 1_{\mathbf k[-n]}=(q-1)^{-1}(0,1,0)\in\Hom_{\mc H}(\mathbf k[-n],\mathbf k[-n])
\end{equation*}
\begin{equation*}
\begin{tikzpicture}[baseline=-\dimexpr\fontdimen22\textfont2\relax,scale=.6]
\draw[thick] (1,.5) to [out=180,in=0] (-.5,0) to [out=0,in=180] (1,-.5);
\node[right,blue] at (.9,.5) {\scriptsize{n}};
\node[right,blue] at (.9,-.5) {\scriptsize{n+1}};
\end{tikzpicture}
\qquad \mapsto\qquad (q-1)^{-1}\left(
\begin{bmatrix}
0 & 1\\ 0 & 0
\end{bmatrix}
,0,0\right)\in\Hom_{\mc H}(\mathbf k[-n-1]\oplus\mathbf k[-n],0)
\end{equation*}
\begin{equation*}
\begin{tikzpicture}[baseline=-\dimexpr\fontdimen22\textfont2\relax,scale=.6]
\draw[thick] (-1,.5) to [out=0,in=180] (.5,0) to [out=180,in=0] (-1,-.5);
\node[left,blue] at (-.9,.5) {\scriptsize{n}};
\node[left,blue] at (-.9,-.5) {\scriptsize{n+1}};
\end{tikzpicture}
\qquad \mapsto\qquad (q-1)^{-1}\left(
0,0,\begin{bmatrix}
0 & 1\\ 0 & 0
\end{bmatrix}
\right)\in\Hom_{\mc H}(0,\mathbf k[-n-1]\oplus\mathbf k[-n])
\end{equation*}
\begin{equation*}
\begin{tikzpicture}[baseline=-\dimexpr\fontdimen22\textfont2\relax,scale=.6]
\draw[thick] (-1,-.5) to [out=0,in=180] (1,.5);
\draw[line width=1mm,white] (-1,.5) to [out=0,in=180] (1,-.5);
\draw[thick] (-1,.5) to [out=0,in=180] (1,-.5);
\node[right,blue] at (.9,.5) {\scriptsize{m}};
\node[right,blue] at (.9,-.5) {\scriptsize{n}};
\end{tikzpicture}
\qquad \mapsto\qquad (q-1)^{-2}\left(0,
\begin{bmatrix} 0 & 1\\ 1 & 0 \end{bmatrix},0\right)
\in\Hom_{\mc H}(\mathbf k[-n]\oplus\mathbf k[-m],\mathbf k[-m]\oplus\mathbf k[-n])
\end{equation*}

Note that the shift of a graded vector space is by definition/convention given by $(V[n])^k:=V^{k+n}$, so $\mathbf k[-n]$ is concentrated in degree $n$.
The following theorem is the main result of this paper.

\begin{theorem}
\label{thm_phi_equivalence}
The above rules define a monoidal functor $\Phi:\mc S|_q\to \mc H$ which is an equivalence of categories.
\end{theorem}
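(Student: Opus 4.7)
The proof breaks into four tasks: well-definedness of $\Phi$, monoidality, essential surjectivity, and full faithfulness. For well-definedness, since $\mc S|_q$ is presented by the elementary tangles in \eqref{elem_tangles} subject to Legendrian isotopy (the front-projection Reidemeister moves) and the skein relations \eqref{gs_1f}--\eqref{gs_3f}, I would verify that each relation holds after applying $\Phi$. These are local assertions that reduce to finite computations in $\mc H$ involving objects of total dimension at most four, carried out using the composition formula \eqref{h_comp}. The normalization constants $(q-1)^{-1}$ on cusps and $(q-1)^{-2}$ on crossings are dictated precisely by \eqref{gs_2f} and \eqref{gs_1f}; the triple-point relation \eqref{gs_3f} holds because the resulting triple has an $f$ that fails to be a quasi-isomorphism, so its class vanishes. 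Monoidality is immediate on objects from \eqref{flag_on_direct_sum}, and on morphisms it reduces, via generators-and-relations, to checking that vertical stacking of elementary tangles corresponds to the Hall-type monoidal product in $\mc H$. Essential surjectivity is straightforward: given $(V,F_\bullet V) \in \Ob(\mc H)$, pick a homogeneous flag-adapted basis $x_1,\ldots,x_N$ and take $X \subset \RR$ to be $N$ ordered points with $\deg(x_i)$; then $\Phi(X) \cong (V,F_\bullet V)$.

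The substantive content is full faithfulness, i.e.\ showing that
\[
\Phi:\Hom_{\mc S|_q}(X,Y)\longrightarrow \Hom_{\mc H}(\Phi(X),\Phi(Y))
\]
is an isomorphism of $\QQ$-vector spaces. The target has the explicit basis $B(\Phi(X),\Phi(Y))$ of equivalence classes of triples $(d_V,f,d_W)$. My plan has three sub-steps. (a) Produce a spanning set of the source indexed by an explicit combinatorial set $N(X,Y)$ by reducing arbitrary tangles to a normal form under Legendrian isotopy and the skein relations, with a prescribed pattern of cups, caps, and crossings. (b) Establish a graded Bruhat-type decomposition for triples: by acting with the flag-and-differential-preserving automorphism groups on both sides, every class in $B(\Phi(X),\Phi(Y))$ admits a canonical representative in which $f$ is a permutation-type matrix and $d_V,d_W$ are in a prescribed strictly upper-triangular form, giving a parametrization of $B(\Phi(X),\Phi(Y))$ by the same set $N(X,Y)$. (c) Verify that $\Phi$ sends normal-form tangles to canonical representatives, which identifies the two parametrizations, so the spanning set maps bijectively to a basis and is itself a basis.

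The main obstacle is steps (a)--(b): constructing the normal form under the graded skein relations and matching the combinatorics to $B(V,W)$. The difficulty is that \eqref{gs_1f} branches according to the labels $m,n$ and their parity, so resolving crossings produces several cases, and a careful induction on a complexity measure (crossing number together with a monotonicity statistic on the front projection) is needed to terminate. On the algebraic side, the graded Bruhat reduction must track the extra degrees of freedom coming from the differentials, beyond the permutation datum of $f$. The bijection in (c) must pair these additional parameters precisely, so that the normalization factors $(q-1)^{\pm k}$ built into $\Phi$ conspire to balance out — this is the graded refinement of the classical Bruhat-decomposition identification $B(V,V) = B\backslash GL(n+1,\mathbf k)/B = S_{n+1}$ recalled in the prelude.
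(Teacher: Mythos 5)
Your overall skeleton (check the relations on elementary tangles, easy essential surjectivity, then match a topological spanning set against the combinatorial basis $B(\Phi X,\Phi Y)$) is aligned with the paper, and steps (a) and (b) are sound in outline: (a) is essentially Lemma~\ref{lem_tanglegen}, and (b) is the Barannikov/Bruhat classification of Proposition~\ref{prop_dclass}, which for pairs $(V,W)$ yields the description of $B(V,W)$ by partial rulings plus a bijection. The genuine gap is step (c). It is simply not true that $\Phi$ sends a normal-form tangle to a single canonical representative: already $\Phi(1_X)=1_V$ is, by \eqref{h_id}, a sum over \emph{all} flag-preserving differentials, and by Lemma~\ref{lem_basic_images} the image of any tangle containing a cusp or crossing is likewise a large linear combination of triples (in fact, by Proposition~\ref{prop_phi_rulings}, a weighted count of rulings). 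So "the spanning set maps bijectively to a basis" does not follow from any elementwise matching; what is needed, and what the paper supplies, is an ordering of the basis together with an upper-triangularity-with-unit-diagonal argument showing the images are linearly independent. Concretely, the paper reduces to one-sided morphisms via the bending/cone compatibilities \eqref{tangle_bend_iso}, \eqref{catH_cone} and Corollary~\ref{cor_2to1commutes} (which itself requires Lemma~\ref{lem_Hcone} and the computation $\Phi(\beta_Y)=\beta_{\Phi Y}$), identifies $\Phi$ on $\Hom_{\mc S|_q}(X,\emptyset)$ with the ruling count $\nu_X$ (Proposition~\ref{prop_phi_rulings}), and proves $\nu_X$ is an isomorphism by induction on $|X|$ with a block-upper-triangular change-of-basis matrix whose diagonal entries are units $q^m(q-1)^n$ (Proposition~\ref{prop_nu_iso}). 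Without some substitute for this triangularity/leading-term analysis, your step (c) fails, and with it the linear independence of the images and the injectivity of $\Phi$ on Hom-spaces.

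Two smaller points. First, your explanation of why \eqref{gs_3f} maps to zero ("$f$ fails to be a quasi-isomorphism") is off: morphisms in $\mc H$ have quasi-isomorphisms $f$ by definition, and the vanishing instead comes from the composition formula \eqref{h_comp} — the sum over isomorphisms $b$ is empty because the two filtered middle complexes (with differentials $e_{12}$ and $e_{23}$ on $\mathbf k[-n-1]\oplus\mathbf k[-n]\oplus\mathbf k[-n+1]$) are not isomorphic as filtered complexes. Second, working with two-sided Homs directly in (b)--(c) is possible in principle, but the paper's reduction to $Y=\emptyset$ is what makes the combinatorics (rulings of $X$ versus $B(\Phi X,0)$) tractable; if you avoid it you must redo the bookkeeping of partial rulings and the bijection $\sigma$ by hand, and you lose the clean identification of $\Phi$ with a ruling count that drives the triangularity argument.
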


As a consequence of this theorem we get positive bases on the Hom-spaces in $\mc S$, since Hom-spaces in $\mc H$ have natural positive bases.
It would be interesting to have a version of this theorem for $\ZZ/2$-graded tangles/chain complexes, since in this case the Legendrian skein relations are specializations of the framed HOMFLY-PT skein relations.
The definition of $\mc S$ is completely analogous, but the definition of $\mc H$ is not clear in this case.
This is related to the problem of defining the Hall algebra for $\ZZ/2$-periodic categories.

\subsection*{Outline}

In section~\ref{sec_H} we check that $\mc H$ is a monoidal category and study some of its properties.
The goal of Section~\ref{sec_S} is to obtain a more explicit description of the Hom-spaces in $\mc S$ using ideas from Legendrian knot theory.
The heart of the paper is Section~\ref{sec_functor}, in which we check that $\Phi$ is well-defined, i.e. invariant under Reidemeister moves and skein moves, and provides an equivalence $\mc S|_q\to \mc H$.
Finally, Section~\ref{sec_qrep} describes a dg-model for the category of complexes of quiver representations, while Section~\ref{sec_hcard} provides a more conceptual derivation of the formulas used in the definition of $\mc H$ in terms of homotopy cardinality.

\subsection*{Acknowledgments}
 
We thank Tom Bridgeland, Ben Cooper, Bernhard Keller, Dan Rutherford, Peter Samuelson, and Ivan Smith for stimulating discussions. 
The author presented an early version of the results in May 2019 at the conference on \textit{Interactions between Representation Theory and Homological Mirror Symmetry} at the University of Leicester and greatly benefited from interactions with other participants there.
We also thank an anonymous referee for carefully reviewing the manuscript and suggesting numerous improvements to the exposition.

\section{The category $\mc H$}
\label{sec_H}

This section is devoted to the category $\mc H$.
We begin with some remarks on chain complexes with complete flag in Subsection~\ref{subsec_flags}.
Subsection~\ref{subsec_h_monoidal} contains the proof that $\mc H$ is a monoidal category. 
The most involved step is to show that the monoidal product of morphisms is bifunctorial.
Two dualities for $\mc H$, corresponding to rotation in the category of tangles, are investigated in Subsection~\ref{subsec_h_dualities}.
The Hom-spaces of $\mc H$ are studied in more detail in Subsection~\ref{subsec_cones}.

\subsection{Chain complexes with complete flag}
\label{subsec_flags}

Fix  a base field $\mathbf k$ throughout.
Let $V$ be a $\ZZ$-graded vector space with complete flag $F_\bullet V$ of graded subspaces, i.e. on object of $\mc H$.
We are interested in the classification of differentials $d:V\to V[1]$, $d^2=0$, up to automorphisms of $V$ which preserve the grading and the flag.
This follows essentially the Bruhat decomposition and is contained in work of Barannikov~\cite{barannikov94}.
To state the result we introduce some terminology.

\begin{df}
Let $X$ be a finite, totally ordered, $\ZZ$-graded set.
A \textbf{partial ruling} on $X$ is given by a subset $D\subset X$ and an injective function $\delta:D\to X\setminus D$ such that $\delta(i)<i$ and $\deg\delta(i)=\deg i+1$ for all $i\in D$.
A partial ruling is a \textbf{ruling} if $X=D\cup \delta(D)$.
\end{df}

A $\ZZ$-graded vector space $V$ with complete flag $F_\bullet V$ is classified by the set $\Phi^{-1}V:=\{1,\ldots,n\}$, $n:=\dim V$, with the usual total order and grading such that $i\in X$ has the same degree as the one-dimensional space $F_iV/F_{i-1}V$.
Choose a homogeneous basis $b_1,\ldots,b_n$ of $V$ such that $b_1,\ldots,b_i$ span $F_iV$. 
A partial ruling $(D,\delta)$ on $\Phi^{-1}V$ determines a differential $d(D,\delta)$ on $V$ with $d(b_i)=b_{\delta(i)}$ if $i\in D$ and $d(b_i)=0$ otherwise.
For different choices of bases as above the resulting differentials are conjugate by an automorphism of $V$ preserving the grading and the flag.
The following is an easy consequence of the classical Bruhat decomposition, see~\cite{barannikov94} for details.

\begin{prop}\label{prop_dclass}
Let $V$ be a $\ZZ$-graded vector space with complete flag $F_\bullet V$, then the assignment $(D,\delta)\mapsto d(D,\delta)$ determines a one-to-one correspondence between partial rulings on $\Phi^{-1}V$ on the one hand, and flag-preserving differentials on $V$ up to conjugation by automorphism preserving the grading and the flag on the other.
\end{prop}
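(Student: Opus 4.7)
The plan is to translate the classification problem into matrix form and then apply a pivoting argument in the spirit of the Bruhat decomposition. Fix a homogeneous basis $b_1,\ldots,b_n$ of $V$ with $F_iV=\mathrm{Span}(b_1,\ldots,b_i)$. A flag-preserving degree-one differential $d$ then corresponds to a strictly upper triangular matrix $A=(A_{ji})$ (entries vanish unless $j<i$) satisfying $A^2=0$ and the grading constraint $A_{ji}\neq 0\Rightarrow \deg b_j=\deg b_i+1$, while a flag- and grading-preserving automorphism is an invertible upper triangular matrix $P$ with $P_{ji}\neq 0\Rightarrow \deg b_j=\deg b_i$, acting by $A\mapsto P^{-1}AP$. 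The differential $d(D,\delta)$ corresponds to the partial permutation matrix $E_{(D,\delta)}$ with a single $1$ in each column $i\in D$ at row $\delta(i)<i$. The claim is that every admissible $A$ is conjugate to a unique such $E_{(D,\delta)}$.

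For existence, I would perform Gaussian elimination using only the allowed conjugations. A single elementary conjugation by $P=I+cE_{i,i'}$ with $i<i'$ and $\deg b_i=\deg b_{i'}$ simultaneously adds $c$ times column $i$ to column $i'$ and subtracts $c$ times row $i'$ from row $i$. Processing candidate pivot positions $(j,i)$ in a suitable order (for instance, by decreasing $i-j$), at each step, if $A_{ji}\neq 0$ one uses it as a pivot: further conjugations with $i'>i$ clear the remaining entries of row $j$ to the right of column $i$, while conjugations involving rows $j'<j$ clear column $i$ above row $j$. The grading constraint is preserved because every conjugation used involves pairs of indices whose basis vectors have equal degree. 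After termination, at most one nonzero entry survives in each row and in each column; rescaling these to $1$ by a diagonal conjugation produces some $E_{(D,\delta)}$, and the identity $A^2=0$ forces $\delta$ to map $D$ injectively into $X\setminus D$ with $\delta(i)<i$ and $\deg\delta(i)=\deg i+1$, so $(D,\delta)$ is indeed a partial ruling.

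For uniqueness, I would exhibit $(D,\delta)$ as an invariant of the conjugacy class. For each pair $j<i$ in $\Phi^{-1}V$, set
\[
b_{ij}(d):=\dim\frac{d(F_iV)\cap F_jV}{d(F_{i-1}V)\cap F_jV+d(F_iV)\cap F_{j-1}V}.
\]
Any flag-preserving automorphism $g$ carries each of the three subspaces appearing here isomorphically onto its image under $g^{-1}$, so $b_{ij}(d)$ depends only on the conjugacy class of $d$. A direct calculation on the normal form $d(D,\delta)$ shows $b_{ij}=1$ precisely when $i\in D$ and $\delta(i)=j$, and vanishes otherwise; this recovers $(D,\delta)$ from $d$ and yields the bijection.

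The main subtlety lies in the bookkeeping for the pivoting step, namely verifying that a suitable elimination order respects the upper triangular and grading constraints at every stage without reintroducing previously cleared entries. This is precisely the content of Barannikov's argument in~\cite{barannikov94}, adapting the classical proof of the Bruhat decomposition to strictly upper triangular nilpotent matrices with a compatible grading.
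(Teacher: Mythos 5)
Your route is essentially the paper's own: the paper offers no argument beyond remarking that the statement is an easy consequence of the classical Bruhat decomposition and citing Barannikov, and your matrix reformulation plus elimination-by-simultaneous-row-and-column operations is precisely that argument, with the delicate bookkeeping deferred to the same reference. Two comments. First, your uniqueness step is more explicit than anything in the paper and is correct: the quantity $b_{ij}(d)$ is invariant because a flag-preserving automorphism $g$ carries each of the three subspaces for $d$ onto the corresponding subspace for $g^{-1}dg$, and on the normal form $d(D,\delta)$ one checks directly that $b_{ij}=1$ exactly when $i\in D$ and $\delta(i)=j$, so $(D,\delta)$ is recovered from the conjugacy class; this is a worthwhile addition. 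Second, a caution on the existence half: the hypothesis $d^2=0$ is needed not only at the end (to conclude $\delta(D)\cap D=\emptyset$) but already for the elimination to terminate in a form with at most one nonzero entry per row and column. For instance, take $\deg b_1=2$, $\deg b_2=\deg b_3=1$, $\deg b_4=0$ and the admissible matrix $A$ with entries $1$ at the four allowed positions $(1,2),(1,3),(2,4),(3,4)$; the only nondiagonal conjugations permitted by the grading are by $I+cE_{2,3}$, which replace $A_{13}$ by $A_{13}+cA_{12}$ and $A_{24}$ by $A_{24}-cA_{34}$, and when $\mathrm{char}\,\mathbf k\neq 2$ (so that $A^2\neq 0$) no choice of $c$ clears both row $1$ and column $4$, whereas under $A^2=0$ the two clearing conditions coincide. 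So the ``suitable order'' whose existence you defer to Barannikov is exactly where $d^2=0$ does its work, and your sketch slightly misplaces that hypothesis; since the paper itself outsources the same point to the same citation, this does not put your proposal below the paper's own standard, but if you want a self-contained argument you should run the induction of Barannikov's canonical form (pick the lowest admissible pivot of the last column, replace the target basis vector by the image, and use $d^2=0$ to see the new vector is a cycle before inducting) rather than an unspecified pivot order.
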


Fix a $\ZZ$-graded vector space $V$ with complete flag, $n=\dim V$, and a ruling $(D,\delta)$ on $\Phi^{-1}V$.
An automorphism of $V$ which preserves the grading, the flag, and the differential $d=d(D,\delta)$ is given by an $n\times n$ upper-triangular matrix $A$ with entries $a_{ij}$ where $a_{ij}=0$ if $\deg i\neq \deg j$, $a_{ii}\neq 0$, and
\[
\sum_ja_{ij}d_{jk}=\sum_jd_{ij}a_{jk}
\]
for all $i,k$.
The left-hand side above is $a_{i,\delta(k)}$ if $k\in D$ and $i\leq \delta(k)$ and vanishes otherwise, and similarly the right-hand side is $a_{\delta^{-1}(i),k}$ if $i\in\delta(D)$ and $\delta^{-1}(i)\leq k$ and vanishes otherwise.
It follows that $Ad=dA$ is equivalent to the following list of relations.
\begin{enumerate}[1)]
\item
$a_{i,\delta(k)}=0$ if $k\in D$, $i<\delta(k)$, $\deg i=\deg k+1$, and $i\notin\delta(D)$ or ($i\in\delta(D)$ and $\delta^{-1}(i)>k$).
\item
$a_{\delta^{-1}(i),k}=0$ if $i\in \delta(D)$, $\delta^{-1}(i)<k$, $\deg i=\deg k+1$, and $k\notin D$ or ($k\in D$ and $i>\delta(k)$).
\item
$a_{i,\delta(k)}=a_{\delta^{-1}(i),k}$ if $k\in D$, $i\in\delta(D)$, $i<\delta(k)$, $\delta^{-1}(i)<k$, and $\deg i=\deg k+1$.
\item
$a_{\delta(k),\delta(k)}=a_{kk}$ if $k\in D$.
\end{enumerate}
Picture pairs $(i,k)$ as positions in an $n\times n$-matrix, then the first three types of relations correspond to those pairs $(i,k)$ which are above or to the right of a pair of the form $(\delta(j),j)$.
Thus, as an abstract algebraic variety, the group $\mathrm{Aut}(V,d)$ of automorphisms of $V$ preserving the grading, the flag, and the differential $d$ is 
\[
\left(\mathbb{A}^1_{\mathbf k}\setminus 0\right)^{n-r}\times \mathbb{A}_{\mathbf k}^{m-s}
\]
where
\begin{gather*}
r=r(D,\delta):=|D|, \\
s=s(D,\delta):=\left|\left\{(i,j)\mid \deg(i)=\deg(j)+1,j\in D,\delta(j)>i\text{ or }i\in\delta(D),\delta^{-1}(i)<j\right\}\right|,\\
m=\left|\left\{(i,j)\mid i<j, \deg(i)=\deg(j) \right\}\right|.
\end{gather*}

\subsection{The proof of Proposition~\ref{prop_h_monoidal}}
\label{subsec_h_monoidal}

The composition \eqref{h_comp} is easily seen to be well-defined and associative.

\begin{lemma}
The identity $1_V$ of $V\in\mc H$ is given by
\begin{equation}\label{h_id}
1_V=(q-1)^{-\dim V}\left(\prod_{i=0}^{\infty}\left|\Hom^{-i}_{<0}(V,V)\right|^{(-1)^{i+1}}\right)\sum_d(d,1,d)
\end{equation}
where the sum is over all $d\in\Hom^1_{<0}(V,V)$ with $d^2=0$.
\end{lemma}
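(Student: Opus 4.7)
The plan is to verify directly from the composition formula \eqref{h_comp} that the proposed element acts as a two-sided identity on the basis $B(U,V)$ of any Hom-space; by bilinearity this suffices. I write $C:=(q-1)^{-\dim V}\prod_{i\geq 0}|\Hom^{-i}_{<0}(V,V)|^{(-1)^{i+1}}$ and $C':=\prod_{i\geq 1}|\Hom^{-i}_{<0}(V,V)|^{(-1)^{i}}$ (so that the proposed $1_V$ equals $C\sum_d(d,1,d)$ and the composition prefactor in \eqref{h_comp} with middle object $V$ is $C'$). Expanding the composition $1_V\circ(d_U,f,d_V)=CC'\sum_{d}\sum_{b}(d_U,bf,d)$, where $d$ runs over flag-preserving square-zero differentials on $V$ and, for each $d$, $b$ runs over isomorphisms $(V,d_V)\to(V,d)$ preserving grading, flag, and differential.

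The key observation is that every triple appearing in this double sum represents the same equivalence class as $(d_U,f,d_V)$: taking $\phi=\mathrm{id}_U$ and $\psi=b^{-1}\colon(V,d)\to(V,d_V)$ in the equivalence relation works because $\psi$ is a flag-preserving chain isomorphism and $\psi\circ(bf)=f=f\circ\phi$ on the nose (no homotopy needed). Therefore
\[
\sum_{d}\sum_{b}(d_U,bf,d)\ =\ N\cdot[(d_U,f,d_V)],\qquad N:=\#\{(d,b)\}.
\]
To count $N$, note that the group $G:=\mathrm{Aut}_{F,\deg}(V)$ of flag- and grading-preserving automorphisms acts by conjugation on the set of flag-preserving square-zero differentials, with orbit of $d_V$ equal to $\{d:d\sim d_V\}$ and stabilizer $\mathrm{Aut}(V,d_V)$. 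For each $d$ in this orbit the set of $b$'s is an $\mathrm{Aut}(V,d_V)$-torsor, while for $d$ outside the orbit it is empty; orbit--stabilizer therefore gives $N=|G|$.

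It remains to check that $CC'|G|=1$. From the description of $G$ as upper-triangular, grading-preserving invertible matrices one reads off $|G|=(q-1)^{\dim V}|\Hom^{0}_{<0}(V,V)|$, since the diagonal contributes $(q-1)^{\dim V}$ and the strictly upper-triangular, degree-preserving off-diagonal part is exactly $\Hom^{0}_{<0}(V,V)$. Substituting,
\[
C\cdot|G|=\left(\prod_{i\geq 0}|\Hom^{-i}_{<0}(V,V)|^{(-1)^{i+1}}\right)|\Hom^{0}_{<0}(V,V)|=\prod_{i\geq 1}|\Hom^{-i}_{<0}(V,V)|^{(-1)^{i+1}},
\]
and multiplying by $C'$ the exponents $(-1)^{i+1}+(-1)^i$ vanish term by term, so $CC'|G|=1$. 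Hence $1_V\circ(d_U,f,d_V)=(d_U,f,d_V)$.

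The right-identity axiom $(d_V',g,d_W)\circ 1_V=(d_V',g,d_W)$ is checked by an entirely symmetric argument: the composition yields $CC'\sum_{d}\sum_{b}(d,gb,d_W)$ with $b\colon(V,d)\to(V,d_V')$, each summand is equivalent to $(d_V',g,d_W)$ via $\phi=b$ and $\psi=\mathrm{id}_W$, and the same orbit--stabilizer count together with the same cancellation of constants gives the result. I expect no serious obstacle beyond careful bookkeeping of the equivalence relation; the only subtle point is recognising that the sum on the right of \eqref{h_id} is really a single basis element weighted by $|G|$ rather than a sum of distinct basis elements, which is what makes the normalization come out exactly right.
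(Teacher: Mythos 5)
Your proof is correct and takes essentially the same approach as the paper: expand the composition, observe every resulting triple is equivalent to the original one, identify the pairs $(d,b)$ with the group of flag- and grading-preserving automorphisms of $V$ (the paper does this by a direct bijection $b\mapsto(b^{-1}d_Vb,b)$, which is your orbit--stabilizer count), and cancel the normalization constants. The only blemish is your closing aside: the sum in \eqref{h_id} itself is \emph{not} a single basis element weighted by $|G|$ (it contains one basis element per conjugacy class of flag-preserving differentials); it is the double sum arising in the composition with a fixed morphism that collapses to a single class, which is what your argument actually uses.
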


\begin{proof}
Let $(d_V,f,d_W)$ represent a morphism $V\to W$.
We will show that $(d_V,f,d_W)\circ 1_V=(d_V,f,d_W)$, the proof that $1_W\circ (d_V,f,d_W)=(d_V,f,d_W)$ being similar.
The set of pairs $(b,d)$ where $d\in\Hom^1_{<0}(V,V)$ is a differential and $b:(V,d)\to (V,d_V)$ is an isomorphism of filtered chain complexes is in bijection, via projection to the first factor, with the subgroup of invertible elements in $\Hom^0_{\leq 0}(V,V)$, since $d=b^{-1}d_Vb$. 
This subgroup has size $(q-1)^{\dim V}|\Hom^0_{<0}(V,V)|$.
Moreover, all triples $(b^{-1}d_Vb,fb,d_W)$ are equivalent to $(d_V,f,d_W)$ in $B(V,W)$, so
\[
 (d_V,f,d_W)\circ\sum_d(d,1,d) =(q-1)^{\dim V}\left(\prod_{i=0}^{\infty}\left|\Hom^{-i}_{<0}(V,V)\right|^{(-1)^{i}}\right)(d_V,f,d_W)
\]
thus $(d_V,f,d_W)\circ 1_V=(d_V,f,d_W)$ in $\mc H$.
\end{proof}

Recall the definition of the monoidal product from the introduction.
The condition on $\delta=(\delta_{11},\delta_{12},\delta_{22})$ can be stated more conceptually as the requirement of being a closed morphism of degree 1 from $((X,d_X),g,(Y,d_Y))$ to $((U,d_U),f,(V,d_V))$ in the derived dg-category of the $A_2$ quiver, $\mc D(A_2)$, as defined in Subsection~\ref{subsec_hall} in the appendix.
In fact, the monoidal product is the Hall algebra product for this dg-category (or rather its full subcategory of perfect objects), hence well-definedness and associativity are general facts. 
Another consequence is the following cohomological formula for the monoidal product.

\begin{lemma}\label{lem_monoidal_ext}
With the notation as in the definition of the monoidal product, the morphism
\[
T(\delta):=\left(\begin{bmatrix} d_U & \delta_{11} \\ 0 & d_X \end{bmatrix},\begin{bmatrix} f & \delta_{12} \\ 0 & g \end{bmatrix},\begin{bmatrix} d_V & \delta_{22} \\ 0 & d_Y \end{bmatrix} \right)
\]
depends only on the class of $\delta$ in $\Ext^1_{\mc D(A_2)}(X\to Y,U\to V)$ and
\[
(d_U,f,d_V)\otimes (d_X,g,d_Y):=\left(\prod_{i=0}^{\infty}\left|\Ext^{-i}((X,d_X),(U,d_U))\right|^{(-1)^{i+1}}\right) \sum_{[\delta]}T(\delta)\nonumber
\]
where the sum is over $[\delta]\in\Ext^1_{\mc D(A_2)}(X\to Y,U\to V)\cong\Ext^1((X,d_X),(U,d_U))$.
\end{lemma}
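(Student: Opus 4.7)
The plan is to handle the two claims of the lemma separately: first show that $T(\delta)$ depends only on the class $[\delta]\in\Ext^1_{\mc D(A_2)}((X\to Y),(U\to V))$, then rewrite the sum and its coefficient in the stated form.

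For the invariance claim, I would verify that if $\delta' = \delta + D(\alpha_0,\beta_0,\gamma_0)$ for a degree $0$ cochain $(\alpha_0,\beta_0,\gamma_0)\in\Hom^0(X,U)\oplus\Hom^0(Y,V)\oplus\Hom^{-1}(X,V)$, then $T(\delta')\sim T(\delta)$ in $B(U\oplus X, V\oplus Y)$. The candidates for the flag-preserving graded isomorphisms that implement this equivalence are the block upper-triangular maps built from $\alpha_0,\beta_0$ on $U\oplus X$ and $V\oplus Y$; these preserve the flag \eqref{flag_on_direct_sum} because $U$ and $V$ sit as the bottom pieces. A direct conjugation computation shows that they shift $\delta_{11},\delta_{22}$ by exactly the coboundary terms $d_U\alpha_0-\alpha_0 d_X$ and $d_V\beta_0-\beta_0 d_Y$, and shift $\delta_{12}$ by $\beta_0 g-f\alpha_0$; the residual piece $d_V\gamma_0+\gamma_0 d_X$ in the chain-map entry is null-homotopic (with $\gamma_0$ as the homotopy) and hence absorbed by the equivalence relation on $B$, which only demands homotopic chain maps. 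One could alternatively invoke the general well-definedness of the Hall product for dg-categories applied to $\mc D(A_2)$.

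For the coefficient claim, I would first establish the isomorphism $\Ext^i_{\mc D(A_2)}((X\to Y),(U\to V))\cong\Ext^i((X,d_X),(U,d_U))$ for all $i$. Applying $\Hom_{\mc D(A_2)}(-,(U\to V))$ to the distinguished triangle $(0\to Y)\to(X\to Y)\to(X\to 0)\to(0\to Y)[1]$ in $\mc D(A_2)$ gives a long exact sequence. The Hom complex from $(X\to 0)$ computes $\Hom(X,\mathrm{fib}(g))$, which is acyclic since $g$ is a quasi-isomorphism, so the sequence collapses to $\Ext^i((X\to Y),(U\to V))\cong\Ext^i(Y,V)$; the quasi-isos $f,g$ further identify this with $\Ext^i(X,U)$. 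Grouping the sum over $\delta$ by Ext class then makes each class appear with multiplicity $|B^1|$, where $B^1$ is the space of $1$-coboundaries in the Hom complex $C^\bullet$ with $|C^{-i}|=|\Hom^{-i}(X,U)|\cdot|\Hom^{-i}(Y,V)|\cdot|\Hom^{-i-1}(X,V)|$. The Euler-characteristic identity $\prod_i|C^i|^{(-1)^i}=\prod_i|H^i|^{(-1)^i}$, applied to the complex truncated by replacing $C^1$ with $B^1$, yields $|B^1|=\prod_{i\geq 0}(|C^{-i}|/|\Ext^{-i}|)^{(-1)^i}$; multiplying by the original coefficient $\prod_{i\geq 0}|C^{-i}|^{(-1)^{i+1}}$ causes the $|C^{-i}|$ factors to cancel, producing exactly $\prod_{i\geq 0}|\Ext^{-i}((X,d_X),(U,d_U))|^{(-1)^{i+1}}$ as claimed.

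The main obstacle is the Ext identification, which relies on careful handling of the Hom complex in $\mc D(A_2)$ (using Subsection~\ref{subsec_hall}) and on the vanishing of $\mathrm{fib}(g)$ coming from the quasi-isomorphism hypothesis; without it the left-hand Ext could be strictly larger. The matrix computations in the first paragraph are tedious but routine once sign conventions are fixed, and the final coefficient simplification is essentially combinatorial bookkeeping once the Ext iso is in hand.
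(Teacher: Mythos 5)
Your proposal is correct and follows essentially the same route as the paper's proof: the same block upper-triangular flag-preserving automorphisms built from $\alpha_0,\beta_0$ together with the homotopy $\gamma_0$ for coboundary invariance, and the same counting (each class contains $|B^1|=|d\Hom^0_{\mc D(A_2)}|$ cocycles, combined with the multiplicative Euler-characteristic identity) for the coefficient, the only difference being that the paper justifies $\Ext^{-i}_{\mc D(A_2)}\cong\Ext^{-i}((X,d_X),(U,d_U))$ by observing that projection onto the $\Hom(X,U)$ summand is a quasi-isomorphism, rather than via your long-exact-sequence argument. One harmless slip: the Hom complex out of $(X\to 0)$ computes $\Hom(X,\mathrm{fib}(f))$, acyclic because $f$ (not $g$) is a quasi-isomorphism — immaterial since both are quasi-isomorphisms by hypothesis.
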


\begin{proof}
A morphism $h\in\Hom^0{\mc D(A_2)}(X\to Y,U\to V)$ is given by a triple $(h_{11},h_{12},h_{22})$ with $h_{11}:X\to U$ and $h_{22}:Y\to V$ of degree 0 and $h_{12}:X\to V$ of degree -1.
Then
\begin{align*}
(\delta+dh)_{11}&= \delta_{11}+d_Uh_{11}-h_{11}d_X \\
(\delta+dh)_{12}&= \delta_{12}+gh_{11}-h_{22}f+d_Vh_{12}+h_{12}d_X \\
(\delta+dh)_{22}&= \delta_{22}+d_Vh_{22}-h_{22}d_Y \\
\end{align*}
and $T(\delta+dh)$ is equivalent to $T(\delta)$ via
\[
\begin{bmatrix} 1 & h_{11} \\ 0 & 1 \end{bmatrix},\qquad \begin{bmatrix} 1 & h_{22} \\ 0 & 1 \end{bmatrix},\qquad \begin{bmatrix} 0 & h_{12} \\ 0 & 0 \end{bmatrix}
\]
where the first and second block matrices are grading and filtration preserving automorphisms of $U\oplus X$ and $V\oplus Y$, respectively, and the third is a homotopy between the quasi-equivalences $U\oplus X\to V\oplus Y$.

For the second claim note that the projection 
\[
\Hom^\bullet_{\mc D(A_2)}(X\to Y,U\to V)\cong\Hom^\bullet((X,d_X),(U,d_U))
\]
is a quasi-isomorphism since $f$ and $g$ are quasi-isomorphism and thus the two representation are contained in a full subcategory of $\mc D(A_2)$ which is quasi-equivalent to the category of chain complexes.
In general, if $C$ is a finite-dimensional chain complex over $\mathbf k$ with cohomology $H(C)$ then
\begin{equation}\label{mult_neg_euler}
|\omega|\prod_{i=0}^{\infty}\left|C^{-i}\right|^{(-1)^{i+1}}=\prod_{i=0}^{\infty}\left|H^{-i}(C)\right|^{(-1)^{i+1}}
\end{equation} 
for any class $\omega\in H^1(C)$.
Combining these two fact we get
\begin{align*}
\prod_{i=0}^{\infty}&\left(\left|\Hom^{-i}(X,U)\right|\left|\Hom^{-i}(Y,V)\right|\left|\Hom^{-i-1}(X,V)\right|\right)^{(-1)^{i+1}}\\
&=\prod_{i=0}^{\infty}\left|\Hom_{\mc D(A_2)}^{-i}(X\to Y,U\to V)\right|^{(-1)^{i+1}}\\
&=|d\Hom_{\mc D(A_2)}^0|^{-1}\prod_{i=0}^{\infty}\left|\Ext_{\mc D(A_2)}^{-i}(X\to Y,U\to V)\right|^{(-1)^{i+1}}\\
&=|d\Hom_{\mc D(A_2)}^0|^{-1}\prod_{i=0}^{\infty}\left|\Ext^{-i}((X,d_X),(Y,d_Y))\right|^{(-1)^{i+1}}
\end{align*}
and 
\[
\sum_\delta T(\delta)=|d\Hom_{\mc D(A_2)}^0|\sum_{[\delta]}T(\delta)
\]
where the first sum is over cocycles and the second over cohomology classes, which together show the claimed formula.
\end{proof}

\begin{lemma}
The monoidal product defined above is bifunctorial, i.e. 
\begin{equation}\label{hv_xchange}
(\beta\otimes \delta)\circ (\alpha\otimes \gamma)=(\beta\circ\alpha)\otimes (\delta\circ\gamma)
\end{equation}
for $\alpha:U\to V$, $\beta:V\to W$, $\gamma:X\to Y$, $\delta:Y\to Z$, and
\begin{equation*}
1_V\otimes 1_W=1_{V\otimes W}.
\end{equation*}
\end{lemma}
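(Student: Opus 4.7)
The plan is to reduce both identities to a bijective match of indexing sets together with careful bookkeeping of the scalar prefactors.

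For the identity $1_V\otimes 1_W=1_{V\otimes W}$, I would expand both sides using formula \eqref{h_id} and the definition of $\otimes$. The left-hand side becomes a sum over triples $(d_V,d_W,\delta)$ where $d_V,d_W$ are flag-preserving differentials on $V,W$ and $\delta=(\delta_{11},\delta_{12},\delta_{22})$ is a cocycle pairing them. Assembling these as an upper-triangular block matrix produces precisely the flag-preserving differentials on $V\oplus W$ relative to \eqref{flag_on_direct_sum}, which is the indexing set for the sum in $1_{V\otimes W}$. The prefactors match because the flag-preserving graded Homs split under direct sum as $\Hom^{-i}_{<0}(V\oplus W,V\oplus W)\cong\Hom^{-i}_{<0}(V,V)\oplus\Hom^{-i}_{<0}(W,W)\oplus\Hom^{-i}(W,V)$, and these three pieces account exactly for the prefactors of $1_V$, $1_W$, and the $\otimes$-product formula, respectively.

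For the exchange identity \eqref{hv_xchange}, fix representatives $\alpha=(d_U,f,d_V)$, $\beta=(d_V',g,d_W)$, $\gamma=(d_X,h,d_Y)$, $\delta=(d_Y',k,d_Z)$. Both sides unfold to sums and the plan is to construct a bijection between the indexing sets. The LHS $(\beta\otimes\delta)\circ(\alpha\otimes\gamma)$ is indexed by (i) a cocycle $\delta^{(1)}$ for $\alpha\otimes\gamma$ defining a differential $d^{(1)}$ on $V\oplus Y$, (ii) a cocycle $\delta^{(2)}$ for $\beta\otimes\delta$ defining $d^{(2)}$ on $V\oplus Y$, and (iii) a flag-preserving chain isomorphism $b\colon(V\oplus Y,d^{(1)})\to(V\oplus Y,d^{(2)})$. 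The crucial structural point is that any such $b$ decomposes uniquely as
\[
b=\begin{bmatrix}b_V & c \\ 0 & b_Y\end{bmatrix}
\]
with $b_V,b_Y$ flag-preserving graded automorphisms of $V,Y$ and $c\in\Hom^0(Y,V)$; the chain-map equation $d^{(2)}b=bd^{(1)}$ then forces $b_V$ to intertwine $d_V$ with $d_V'$, $b_Y$ to intertwine $d_Y$ with $d_Y'$, and $c$ to satisfy a mixing relation coupling it to the diagonal components of $\delta^{(1)},\delta^{(2)}$.

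On the RHS, $(\beta\circ\alpha)\otimes(\delta\circ\gamma)$ is indexed by an isomorphism $b_V$ appearing in the composition $\beta\circ\alpha$, an isomorphism $b_Y$ in $\delta\circ\gamma$, and a cocycle $\delta^{(3)}$ for the monoidal product of the composites. The intended bijection sends $(\delta^{(1)},\delta^{(2)},b_V,b_Y,c)$ to $(b_V,b_Y,\delta^{(3)})$ where $\delta^{(3)}$ is obtained by splicing $\delta^{(1)},\delta^{(2)}$ along $c$, essentially the Baer sum of the two extensions interpreted through the intermediate isomorphism. Passing to the cohomological form of Lemma~\ref{lem_monoidal_ext} makes the bijection canonical by quotienting out exactly the ambiguity in $c$ by coboundaries.

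The main obstacle I expect is the weight bookkeeping. On the LHS the composition prefactor involves $|\Hom^{-i}_{<0}(V\oplus Y,V\oplus Y)|$, which carries an extra $\Hom(Y,V)$ factor not visibly present on the RHS, while the RHS monoidal prefactor is built from $\Ext^{-i}$-groups in $\mc D(A_2)$ for the composite morphisms. Reconciling these reduces to repeated application of \eqref{mult_neg_euler} to the $\Hom$-complexes in $\mc D(A_2)$, combined with the direct-sum decomposition of flag-preserving Homs used above. Conceptually the cleanest viewpoint is that both sides compute a single Hall algebra product for the dg-category of perfect objects of $\mc D(A_2\times A_2)$, and \eqref{hv_xchange} is a formal consequence of two commuting Hall-type structures on a common dg-category; however, since this framework is only sketched in Subsection~\ref{subsec_hall}, the direct calculation outlined above is likely what is actually needed.
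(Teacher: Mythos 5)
Your proposal is correct and follows essentially the same route as the paper: for fixed middle isomorphisms $(b_V,b_Y)=(a_2,a_1)$ the paper packages your data $(\delta^{(1)},\delta^{(2)},c)$ as a single closed degree-one morphism in $\mc D(A_4)$, shows each summand depends only on its $\Ext^1$-class, transfers it to $\Ext^1_{\mc D(A_2)}$ via the quasi-isomorphisms induced by the horizontal maps (your ``splicing along $c$'', realized there by the explicit cocycle $f_2a_2b_{12}+f_2b_{23}g_1+b_{34}a_1g_1$ rather than a Baer sum), and matches the prefactors using \eqref{mult_neg_euler}, exactly as you anticipate. Your argument for $1_V\otimes 1_W=1_{V\otimes W}$ likewise coincides with the paper's (via \eqref{monoidal_prod_unit_left} and the splitting of $\Hom_{<0}$ under direct sum).
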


\begin{proof}
The product on the left-hand side of \eqref{hv_xchange} is a weighted sum over certain 9-tuples of maps 
\begin{equation*}
\left(a_1,a_2,b_{11},b_{12},b_{22},b_{23},b_{33},b_{34},b_{44}\right)
\end{equation*}
with $|a_i|=|b_{i,i+1}|=0$, $|b_{ii}|=1$, fitting into in a diagram 
\begin{equation}\label{comp_diag1}
\begin{tikzcd}
X \arrow{r}{g_1}\arrow{d}{b_{11}} \arrow{dr}{b_{12}} & Y \arrow{d}{b_{22}}\arrow{dr}{b_{23}}\arrow{r}{a_1} & Y \arrow{d}{b_{33}}\arrow{dr}{b_{34}}\arrow{r}{f_1} & Z \arrow{d}{b_{44}} \\
U  \arrow{r}[swap]{g_2} & V \arrow{r}[swap]{a_2} & V \arrow{r}[swap]{f_2} & W 
\end{tikzcd}
\end{equation}
where horizontal and vertical arrows are chain maps and squares commute up to homotopies given by diagonal arrows.
Here, the differentials on $U$ and the first copy of $V$ and $g_2$ come from $\alpha$, and similarly the other differentials and $f_2,g_1,f_1$ from $\beta,\gamma,\delta$ respectively.
The product on the right-hand side of \eqref{hv_xchange} is a weighted sum over certain 5-tuples of maps
\begin{equation*}
\left(a_1,a_2,c_{11},c_{12},c_{22}\right)
\end{equation*} 
with $|a_i|=|c_{12}|=0$, $|c_{ii}|=1$, fitting into in a diagram
\begin{equation}\label{comp_diag2}
\begin{tikzcd}
X \arrow{d}{c_{11}} \arrow{drrr}{c_{12}}\arrow{r}{g_1} & Y \arrow{r}{a_1} & Y \arrow{r}{f_1} & Z\arrow{d}{c_{22}} \\
U  \arrow{r}[swap]{g_2} & V \arrow{r}[swap]{a_2} & V \arrow{r}[swap]{f_2} & W 
\end{tikzcd}
\end{equation}
where horizontal and vertical arrows are chain maps and the square commute up to homotopy given by $c_{12}$.
The condition on the $c_{ij}$, given $a_1,a_2$, is precisely that they give a closed morphism of degree one 
\begin{equation*}
(c_{11},c_{12},c_{22})\in\Hom^1_{\mc D(A_2)}(X\to Z,U\to W)
\end{equation*}
in the dg-category of representations of the $A_2$ quiver ($\bullet\to\bullet$) defined in Appendix~\ref{sec_qrep}. 
Similar, the condition on the $b_{ij}$, given $a_1,a_2$, is that they given a closed morphism of degree one
\begin{equation*}
(b_{ij})_{ij}\in\Hom^1_{\mc D(A_4)}(X\to Y\to Y\to Z,U\to V\to V\to W)
\end{equation*}
in the dg-category of representations of the $A_4$ quiver ($\bullet\to\bullet\to\bullet\to\bullet$).
Furthermore, let
\begin{gather*}
r_1 := \prod_{i=1}^{\infty}\left(\left|\Hom^{-i}_{< 0}(V,V)\right|\left|\Hom^{-i}_{< 0}(Y,Y)\right|\right)^{(-1)^i} \\
r_2 := \prod_{i=0}^{\infty}\left|\Hom^{-i}_{\mc D(A_4)}(X\to Y\to Y\to Z,U\to V\to V\to W)\right|^{(-1)^{i+1}} \\
r_3 := \prod_{i=0}^{\infty}\left|\Hom^{-i}_{\mc D(A_2)}(X\to Z,U\to W)\right|^{(-1)^{i+1}} 
\end{gather*}
then the left-hand side of \eqref{hv_xchange} is 
\begin{equation}\label{lhs_detail}
r_1r_2\sum_{a_1,a_2}\sum_b\left(\begin{bmatrix} d_U & b_{11} \\ 0 & d_X \end{bmatrix},\begin{bmatrix} f_2a_2g_2 & f_2a_2b_{12}+f_2b_{23}g_1+b_{34}a_1g_1 \\ 0 & f_1a_1g_1 \end{bmatrix},\begin{bmatrix} d_W & b_{44} \\ 0 & d_Z\end{bmatrix} \right)
\end{equation}
and the right-hand side is
\begin{equation}\label{rhs_detail}
r_1r_3\sum_{a_1,a_2}\sum_c\left(\begin{bmatrix} d_U & c_{11} \\ 0 & d_X \end{bmatrix},\begin{bmatrix} f_2a_2g_2 & c_{12} \\ 0 & f_1a_1g_1 \end{bmatrix},\begin{bmatrix} d_W & c_{22} \\ 0 & d_Z\end{bmatrix} \right).
\end{equation}

We claim that the equivalence classes of the morphisms (triples) which appear as summands in \eqref{lhs_detail} and \eqref{rhs_detail}, which we denote by $T(b)$ and $T(c)$, depend only on the classes of $[b]\in\Ext^1_{\mc D(A_4)}(X\to Y\to Y\to Z,U\to V\to V\to W)$ and $[c]\in\mathrm{Ext}^1_{\mc D(A_2)}(X\to Z,U\to W)$, respectively (as well as other maps and differentials involved).
This is very similar to what was proven in Lemma~\ref{lem_monoidal_ext}, in particular in the case of $c$.
In the case of $b$ one has to use the homotopy given by 
\[
f_2a_2h_{12}+f_2h_{23}g_1+h_{34}a_1g_1
\]
instead of just $h_{12}$.

Next, note that all horizontal arrows in \eqref{comp_diag1} (which are the same as those in \eqref{comp_diag2}) are quasi-isomorphisms, and thus there are quasi-isomorphisms of $\Hom$-complexes
\begin{align*}
\Hom_{\mc D(A_2)}(X\to Z,U\to W)&\simeq \Hom(X,U) \\
&\simeq\Hom_{\mc D(A_4)}(X\to Y\to Y\to Z,U\to V\to V\to W). \nonumber
\end{align*}
providing an isomorphism
\[
\phi:\Ext^1_{\mc D(A_4)}(X\to Y\to Y\to Z,U\to V\to V\to W)\to\Ext^1_{\mc D(A_2)}(X\to Z,U\to W).
\]
We claim that $T(\phi([b]))=T([b])$ as morphisms $U\otimes X\to V\otimes Y$.
Indeed, let $b\in\Hom^1_{\mc D(A_4)}(X\to Y\to Y\to Z,U\to V\to V\to W)$ be closed, then
\[
(c_{11},c_{12},c_{22}):=(b_{11},f_2a_2b_{12}+f_2b_{23}g_1+b_{34}a_1g_1,b_{44})
\]
is a closed in $\Hom^1_{\mc D(A_2)}(X\to Z,U\to W)$, $\phi([b])=[c]$ since $b_{11}=c_{11}$, and $T([c])=T([b])$ is clear.

Finally, note that $r_2|[b]|=r_3|[c]|$ using the quasi-isomorphism above and \eqref{mult_neg_euler}.
Putting everything together, we have
\begin{align*}
(\beta\otimes \delta)\circ (\alpha\otimes \gamma)&=r_1r_2\sum_{a_1,a_2}\sum_bT(b) \\
&=r_1r_2\sum_{a_1,a_2}\sum_{[b]}|[b]|T([b]) \\
&=r_1\sum_{a_1,a_2}\sum_{[b]}r_2|[b]|T(\phi([b])) \\
&=r_1\sum_{a_1,a_2}\sum_{[c]}r_3|[c]|T([c]) \\
&=r_1r_3\sum_{a_1,a_2}\sum_cT(c) =(\beta\circ\alpha)\otimes (\delta\circ\gamma).
\end{align*}

The second part, $1_{V\otimes W}=1_V\otimes 1_W$, is proven using the quasi-isomorphism
\begin{equation*}
\Hom(W,V)\simeq \Hom_{\mc D(A_2)}(W\xrightarrow{1_W} W,V\xrightarrow{1_V} V)
\end{equation*}
where the complex on the left appears in the definition of $1_{V\otimes W}$ and the complex on the right appears in the product $1_V\otimes 1_W$.
The identity also follows from \eqref{monoidal_prod_unit_left} or \eqref{monoidal_prod_unit_right} below.
\end{proof}

\subsection{Dualities}
\label{subsec_h_dualities}

Given a triple $(d_V,f,d_W)$ there are two natural duals to take:
\begin{equation}\label{Hdual_1}
D(d_V,f,d_W):=(d_W,f',d_V), \qquad f' \text{ inverts } f \text{ up to homotopy }
\end{equation}
and
\begin{equation}\label{Hdual_2}
(d_V,f,d_W)^\vee:=(d_W^\vee,f^\vee,d_V^\vee)
\end{equation}
induced by vector space duality.
In this subsection we show that these two operations induce contravariant autoequivalences from $\mc H$ to itself which are compatible with the monoidal product.

We first discuss the functor $D:\mc H\to\mc H$ which is the identity on objects and acts on morphisms by \eqref{Hdual_1}.
Contravariance with respect to composition in $\mc H$ uses only the fact that $b\mapsto b^{-1}$ is a bijection from filtration preserving isomorphisms of chain complexes $(V,d_V)\to (V,d_V')$ to filtration preserving isomorphisms of chain complexes $(V,d_V')\to (V,d_V)$.
It is also clear that $D$ preserves identities.

\begin{prop}
The functor $D$ is covariant with respect to the monoidal product.
\end{prop}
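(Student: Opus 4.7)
The plan is to compare the two weighted sums defining $D(\alpha\otimes\beta)$ and $D(\alpha)\otimes D(\beta)$ by using the cohomological reformulation of the monoidal product from Lemma~\ref{lem_monoidal_ext}, and then establishing a natural bijection between their index sets which matches summands term by term (up to the equivalence relation defining morphisms in $\mc H$).

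Write $\alpha=(d_U,f,d_V)$ and $\beta=(d_X,g,d_Y)$. By Lemma~\ref{lem_monoidal_ext}, $\alpha\otimes\beta$ is a weighted sum of triples $T(\delta)$ indexed by $[\delta]\in\Ext^1((X,d_X),(U,d_U))$, where each $T(\delta)$ has upper-triangular differentials built from $\delta_{11},\delta_{22}$ and middle map $\left[\begin{smallmatrix} f & \delta_{12} \\ 0 & g \end{smallmatrix}\right]$. Applying $D$ to each summand requires a homotopy inverse of this upper-triangular quasi-iso; the first technical step is to show that such a homotopy inverse can always be taken to be upper-triangular of the form $\left[\begin{smallmatrix} f' & h \\ 0 & g' \end{smallmatrix}\right]$, with $f',g'$ homotopy inverses of $f,g$, and $h$ determined up to homotopy by $fh+\delta_{12}g'\simeq 0$; one explicit choice being $h=-f'\delta_{12}g'$. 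Thus $D(T(\delta))$ is represented by a triple of exactly the same upper-triangular shape as appears in $D(\alpha)\otimes D(\beta)$, but with differentials and off-diagonals suitably relabelled.

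The second step is to interpret the resulting triple as a summand $T(\delta')$ of $D(\alpha)\otimes D(\beta)$, whose sum is indexed by $[\delta']\in\Ext^1((Y,d_Y),(V,d_V))$. Because $f,g$ are quasi-isomorphisms, conjugation by $(f',g')$ induces canonical isomorphisms $\Ext^i((X,d_X),(U,d_U))\cong\Ext^i((Y,d_Y),(V,d_V))$ for all $i$. Under this isomorphism $[\delta]$ corresponds to a class $[\delta']$ with $\delta'_{11}=\delta_{22}$, $\delta'_{22}=\delta_{11}$, and $\delta'_{12}\simeq -f'\delta_{12}g'$; one checks directly using the cocycle condition on $\delta$ that $\delta'$ is indeed a cocycle. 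The prefactors $\prod_i|\Ext^{-i}|^{(-1)^{i+1}}$ in the two sums are then equal by the same isomorphism, so the two weighted sums coincide term by term under the bijection.

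The main obstacle is the bookkeeping of homotopies: one must verify that $D(T(\delta))$ agrees with $T(\delta')$ in $B$ on the nose (i.e., as equivalence classes) and not merely up to some further ambiguity. This, however, is resolved by the fact that the equivalence relation defining morphisms in $\mc H$ identifies triples whose quasi-isomorphisms are chain-homotopic, so the inherent ambiguity in the choices of $f'$, $g'$, and $h$ is absorbed automatically. The only remaining thing is to check that the natural bijection between the $\Ext^1$ groups respects the passage $[\delta]\mapsto[\delta']$, which is a direct computation using that the quasi-isomorphisms of $\Hom$-complexes implementing the identifications are precisely composition with $f,g$ on one side and with $f',g'$ on the other.
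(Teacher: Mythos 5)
Your argument is correct and follows essentially the same route as the paper: both use Lemma~\ref{lem_monoidal_ext} to write each product as a weighted sum over $\Ext^1$-classes, invert the quasi-isomorphism by an upper-triangular homotopy inverse (the paper phrases this as inverting the closed morphism $(g,\delta_{12},f)$ in $\mc D(A_2)$), identify $D(T(\delta))$ with $T(\delta')$ where $\delta'=(\delta_{22},\delta_{12}',\delta_{11})$, and match the prefactors via the induced isomorphism of $\Ext$-groups. The only caveat is that your explicit choice $h=-f'\delta_{12}g'$ satisfies the required chain-map identity only up to correction terms coming from the homotopies $f'f\simeq 1$, $gg'\simeq 1$, but since you only need the existence of a strictly closed upper-triangular inverse (and the equivalence relation in $\mc H$ absorbs the ambiguity), this does not affect the proof.
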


\begin{proof}
Let $X,Y,U,V,d_X,d_Y,d_U,d_V,f,g,\delta_{11},\delta_{12},\delta_{22}$ be as in the definition of the monoidal product, thus forming a diagram of chain maps
\begin{equation*}
\begin{tikzcd}
X \arrow{r}{g}\arrow{d}{\delta_{11}} \arrow{dr}{\delta_{12}} & Y \arrow{d}{\delta_{22}} \\
U  \arrow{r}[swap]{f} & V  
\end{tikzcd}
\end{equation*}
commuting up homotopy given by $\delta_{12}$.
We can interpret the triple $(g,\delta_{12},f)$ as a closed morphism in $\Hom^0_{\mc D(A_2)}((X\to U),(Y\to V))$ and it has an inverse up to homotopy denoted $(g',\delta_{12}',f')\in\Hom^0_{\mc D(A_2)}((Y\to V),(X\to U))$ and forming a diagram
\begin{equation*}
\begin{tikzcd}
Y \arrow{r}{g'}\arrow{d}{\delta_{22}} \arrow{dr}{\delta_{12}'} & X \arrow{d}{\delta_{11}} \\
V  \arrow{r}[swap]{f'} & U  
\end{tikzcd}
\end{equation*}
with the properties of the previous one.
Moreover, the map which sends $(\delta_{11},\delta_{12},\delta_{22})$ to $(\delta_{22},\delta_{12}',\delta_{11})$ induces an isomorphism
\[
\Ext_{\mc D(A_2)}^1((X\to Y),(U\to V))\to \Ext_{\mc D(A_2)}^1((Y\to X),(V\to U))
\]
compatible with the identification of both spaces with $\Ext^1((X,d_X),(U,d_U))$.
Setting
\[
r:=\left(\prod_{i=0}^{\infty}\left|\Ext^{-i}((X,d_X),(U,d_U))\right|^{(-1)^{i+1}}\right)=\left(\prod_{i=0}^{\infty}\left|\Ext^{-i}((Y,d_Y),(V,d_V))\right|^{(-1)^{i+1}}\right)
\]
and using Lemma~\ref{lem_monoidal_ext} and the notation $T(\delta)$ in its proof we compute
\begin{align*}
D((d_U,f,d_V)\otimes (d_X,g,d_Y))&=r\sum_{[\delta]}D(T(\delta))\\
&=r\sum_{[\delta]}T(\delta')\\
&=r\sum_{[\delta']}T(\delta')\\
&=D(d_U,f,d_V)\otimes D(d_X,g,d_Y)
\end{align*}
where the sums are over $[\delta]\in \Ext_{\mc D(A_2)}^1((X\to Y),(U\to V))$ and $[\delta']\in\Ext_{\mc D(A_2)}^1((Y\to X),(V\to U))$ respectively.
\end{proof}

We turn to the second duality, \eqref{Hdual_2}.
The functor acts on objects by $V\to V^\vee$, sending a graded vector space to its dual in the graded sense and with the dual complete flag given by
\[
F_iV^\vee:=\mathrm{Ann}(F_{\dim V-i}V)
\]
where $\mathrm{Ann}(W)\subset V^\vee$ denotes the annihilator of $W\subset V$.
We also write this as $\mathrm{Ann}_V(W)$ if the choice of ambient space needs to be emphasized.

The assignment $V\mapsto V^\vee$ on objects in $\mc H$ is contravariant with respect to the monoidal product of objects, i.e.
\[
(V\otimes W)^\vee\cong W^\vee\otimes V^\vee
\]
as graded spaces with flag, where the identification comes from the usual isomorphism $(V\oplus W)^\vee\cong W^\vee\oplus V^\vee$.
Indeed, on the left hand side we get the flag
\[
\ldots\subset \mathrm{Ann}_{V\oplus W}(V\oplus F_{\dim W-j}W)\subset\ldots\subset \mathrm{Ann}_{V\oplus W}(F_{\dim V-i}V)\subset\ldots
\]
which under the identification above is the same as
\[
\ldots\subset \mathrm{Ann}_W(F_{\dim W-j}W)\subset\ldots\subset \mathrm{Ann}_V(F_{\dim V-i}V)\oplus W^\vee\subset\ldots
\]
which we get on the right hand side.

We note that taking the dual of a map of graded vector spaces gives an isomorphism
\[
\Hom^i(V,W)\cong \Hom^i(W^\vee,V^\vee)
\]
which moreover send flags-preserving morphisms to flag-preserving ones if $V$ and $W$ are equipped with flags and $V^\vee$ and $W^\vee$ with the dual ones.
Using this fact it is straightforward to see that the functor \eqref{Hdual_2} is contravariant with respect to both the composition and the monoidal product of morphisms.

\subsection{Cones}
\label{subsec_cones}

There is an identification
\begin{equation}\label{catH_cone}
\Hom_{\mc H}(V,W) \to \Hom_{\mc H}(W[-1]\otimes V,0),\qquad (d_V,f,d_W)\mapsto \left(\begin{bmatrix} -d_W & f \\ 0 & d_V \end{bmatrix},0,0\right).
\end{equation}
using the fact that the cone over a quasi-isomorphism is an acyclic complex.
The isomorphism \eqref{catH_cone} can alternatively be written in terms of horizontal and vertical composition as follows.
Let
\[
\beta_W:=(q-1)^{-\dim W}\left(\prod_{i=0}^{\infty}\left|\Hom^{-i}_{<0}(W,W)\right|^{(-1)^{i+1}}\right)\sum_d\left(\begin{bmatrix} -d & 1 \\ 0 & d\end{bmatrix},0,0\right)
\]
where $d$ ranges over flag preserving differentials on $W$.
Note the similarity with \eqref{h_id}.

\begin{lemma}
\label{lem_Hcone}
\[
\left(\begin{bmatrix} -d_W & f \\ 0 & d_V \end{bmatrix},0,0\right)=\beta_W\circ\left(1_{W[-1]}\otimes (d_V,f,d_W)\right)
\]
\end{lemma}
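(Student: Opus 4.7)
The plan is to expand the right-hand side directly using the formulas for $\beta_W$, $1_{W[-1]}$, the monoidal product, and composition, then identify the result with the cone triple on the left.

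First, by \eqref{h_id} and the definition of the monoidal product, $1_{W[-1]}\otimes(d_V,f,d_W)$ is a weighted sum, indexed by flag-preserving differentials $d'$ on $W[-1]$ and cocycles $\delta=(\delta_{11},\delta_{12},\delta_{22})$, of triples
\[
T(d',\delta):=\left(\begin{bmatrix} d' & \delta_{11}\\ 0 & d_V\end{bmatrix},\begin{bmatrix} 1 & \delta_{12}\\ 0 & f\end{bmatrix},\begin{bmatrix} d' & \delta_{22}\\ 0 & d_W\end{bmatrix}\right).
\]
Composing with $\beta_W$ via \eqref{h_comp} introduces a further sum over flag-preserving differentials $d$ on $W$ and flag- and differential-preserving isomorphisms $b=\bigl[\begin{smallmatrix} b_{11} & b_{12}\\ 0 & b_{22}\end{smallmatrix}\bigr]:(W[-1]\oplus W,\bigl[\begin{smallmatrix} d' & \delta_{22}\\ 0 & d_W\end{smallmatrix}\bigr])\to(W[-1]\oplus W,\bigl[\begin{smallmatrix} -d & 1\\ 0 & d\end{smallmatrix}\bigr])$. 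Because the middle map of $\beta_W$ is zero, every resulting summand has the form $\left(\bigl[\begin{smallmatrix} d' & \delta_{11}\\ 0 & d_V\end{smallmatrix}\bigr],0,0\right)$.

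Second, I would show every such summand is equivalent in $B(W[-1]\otimes V,0)$ to the cone triple $\left(\bigl[\begin{smallmatrix} -d_W & f\\ 0 & d_V\end{smallmatrix}\bigr],0,0\right)$. The matching of differentials under $b$ gives $b_{11}d'=-db_{11}$, $b_{22}d_W=db_{22}$, and $b_{11}\delta_{22}+b_{12}d_W=-db_{12}+b_{22}$, while the cocycle condition on $\delta$ reads $\delta_{11}=d'\delta_{12}-\delta_{12}d_V+\delta_{22}f$. A direct substitution shows that
\[
A:=\begin{bmatrix} b_{22}^{-1}b_{11} & b_{22}^{-1}b_{11}\delta_{12}+b_{22}^{-1}b_{12}f\\ 0 & 1_V\end{bmatrix}
\]
is a flag-preserving chain isomorphism from the source to the cone, so both triples lie in the same equivalence class.

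Finally, it remains to check that the total weight equals one. For fixed $(d',d)$, the pair $(b_{11},b_{22})$ ranges over flag-preserving chain isomorphisms $(W[-1],d')\to(W[-1],-d)$ and $(W,d_W)\to(W,d)$, while $b_{12}\in\Hom^0(W,W[-1])$ and $\delta_{12}\in\Hom^0(V,W[-1])$ are free (determining $\delta_{22}$ and $\delta_{11}$ respectively). Using Lemma~\ref{lem_monoidal_ext} to reindex the sum over $\delta$ by $\Ext^1$ classes, Proposition~\ref{prop_dclass} to parameterize flag-preserving differentials by rulings, and identity \eqref{mult_neg_euler} to convert products of $|\Hom^{-i}|$ into products of $|\Ext^{-i}|$, one verifies that the four prefactors from $1_{W[-1]}$, $\otimes$, $\beta_W$, and $\circ$ exactly cancel the multiplicity of summands equivalent to the cone. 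The main obstacle is precisely this combinatorial bookkeeping of orders of $\Hom$, $\Ext$, and automorphism groups; it becomes transparent in the homotopy-cardinality formalism of Section~\ref{sec_hcard}, under which the identity reflects the naturality of the cone construction.
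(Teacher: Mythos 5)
Your proof is correct and follows essentially the same route as the paper's: expand $1_{W[-1]}\otimes(d_V,f,d_W)$, compose with $\beta_W$, use the intertwining relations for $b$ to exhibit each summand as flag-preservingly isomorphic to the cone (the paper first removes $\delta_{12}$ by a change of basis and then uses $c_{11}=b_{22}^{-1}b_{11}$, $c_{12}=b_{22}^{-1}b_{12}f$, whereas you absorb $\delta_{12}$ into your matrix $A$ --- a cosmetic difference, and your $A$ does satisfy the required identity), and finally cancel the scalar prefactors against the multiplicity of equivalent summands. The only caveat is that your closing bookkeeping leans on Lemma~\ref{lem_monoidal_ext} and Proposition~\ref{prop_dclass}, which are not really the relevant tools; the cancellation follows from your (correct) parametrization of the summands by $(d',d,b_{11},b_{22})$ ranging over two copies of the flag-preserving group together with free $b_{12},\delta_{12}$, combined with the decomposition $\Hom^{-i}_{<0}(W[-1]\oplus W,W[-1]\oplus W)\cong\Hom^{-i}_{<0}(W,W)^{\oplus 2}\oplus\Hom^{-i-1}(W,W)$, which is exactly how the paper concludes.
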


\begin{proof}
Recall from \eqref{h_id} that
\[
1_{W[-1]}=(q-1)^{-\dim W}\left(\prod_{i=0}^{\infty}\left|\Hom^{-i}_{<0}(W,W)\right|^{(-1)^{i+1}}\right)\sum_d(d,1,d)
\]
where the sum ranges over differentials $d:W\to W[1]$.
We compute
\begin{align*}
(d,1,d)&\otimes (d_V,f,d_W)= \\
&\left(\prod_{i=0}^{\infty}\left(\left|\Hom^{-i}(V,W[-1])\right|\left|\Hom^{-i}(W,W[-1])\right|\left|\Hom^{-i-1}(V,W[-1])\right|\right)^{(-1)^{i+1}}\right) \\
&\cdot\sum_{\delta}\left(\begin{bmatrix} d & \delta_{11} \\ 0 & d_V \end{bmatrix},\begin{bmatrix} 1 & \delta_{12} \\ 0 & f \end{bmatrix},\begin{bmatrix} d & \delta_{22} \\ 0 & d_W \end{bmatrix} \right)
\end{align*}
where $\delta_{12}\in\Hom^0(V,W[-1])$ is arbitrary, $\delta_{22}\in\Hom^1(W,W[-1])$ satisfies $d\delta_{22}+\delta_{22}d_W=0$, and $\delta_{11}=d_W\delta_{12}+\delta_{22}f-\delta_{12}d_V$ in $\Hom^1(V,W[-1])$.
This simplifies since
\[
\prod_{i=0}^{\infty}\left(\left|\Hom^{-i}(V,W[-1])\right|\left|\Hom^{-i-1}(V,W[-1])\right|\right)^{(-1)^{i+1}}=\left|\Hom^0(V,W[-1])\right|^{-1}.
\]
and a change of basis eliminates the $\delta_{12}$ term, giving
\[
(d,1,d)\otimes (d_V,f,d_W)=\left(\prod_{i=0}^{\infty}\left|\Hom^{-i}(W,W[-1])\right|^{(-1)^{i+1}}\right) \sum_{\delta_{22}}\left(\begin{bmatrix} d & \delta_{22}f \\ 0 & d_V \end{bmatrix},\begin{bmatrix} 1 & 0 \\ 0 & f \end{bmatrix},\begin{bmatrix} d & \delta_{22} \\ 0 & d_W \end{bmatrix} \right).
\]

Next we compute the horizontal product with $\beta_W$, where we rename the $d$ in $\beta_{W}$ to $\varepsilon$ to prevent a clash of notation.
Non-zero terms come from invertible upper-triangular matrices $B=\begin{bmatrix} b_{11} & b_{12} \\ 0 & b_{22} \end{bmatrix}$ such that
\[
\begin{bmatrix} b_{11} & b_{12} \\ 0 & b_{22} \end{bmatrix} \begin{bmatrix} d & \delta_{22} \\ 0 & d_W \end{bmatrix}=\begin{bmatrix} -\varepsilon & 1 \\ 0 & \varepsilon \end{bmatrix}\begin{bmatrix} b_{11} & b_{12} \\ 0 & b_{22} \end{bmatrix}
\]
but this implies
\[
\begin{bmatrix} c_{11} & c_{12} \\ 0 & 1 \end{bmatrix}\begin{bmatrix} d & \delta_{22}f \\ 0 & d_V \end{bmatrix}=\begin{bmatrix} -d_W & f \\ 0 & d_V \end{bmatrix}\begin{bmatrix} c_{11} & c_{12} \\ 0 & 1 \end{bmatrix}
\]
with $c_{11}:=b_{22}^{-1}b_{11}$ and $c_{12}:=b_{22}^{-1}b_{12}f$.
Hence

\begin{align*}
\left({\begin{bmatrix} -d_W & f \\ 0 & d_V \end{bmatrix}},0,0 \right)= 
&\left(\prod_{i=0}^{\infty}\left|\Hom^{-i}_{<0}(W[-1]\oplus W,W[-1]\oplus W)\right|^{(-1)^i}\right)\left({\begin{bmatrix} -\varepsilon & 1 \\ 0 & \varepsilon, \end{bmatrix}}, 0, 0 \right) \\ &\circ
\sum_{d,\delta,\varepsilon} \left({\begin{bmatrix} d & \delta_{22}f \\ 0 & d_V \end{bmatrix}},{\begin{bmatrix} 1 & 0 \\ 0 & f \end{bmatrix}},{\begin{bmatrix} d & \delta_{22} \\ 0 & d_W \end{bmatrix}} \right) 
\end{align*}
but
\[
\Hom^{-i}_{<0}(W[-1]\oplus W,W[-1]\oplus W)=\Hom^{-i}_{<0}(W,W)\oplus\Hom^{-i}_{<0}(W,W)\oplus\Hom^{-i-1}(W,W)
\]
so several terms cancel giving the desired formula.
\end{proof}

Using Proposition~\ref{prop_dclass} we can describe  the sets $B(V,W)$ of equivalence classes of triples $(d_V,f,d_W)$ which, by definition, provide a basis for $\Hom_{\mc H}(V,W)$, more explicitly .
First, in the special case $W=0$, there is a one-to-one correspondence between $B(V,0)$ and the set of rulings of $\Phi^{-1}V$.
For the general case we can use the bijection $B(V,W)\cong B(W[-1]\otimes V,0)$ from \eqref{catH_cone} to obtain:  

\begin{prop}
Suppose $V_1,V_2\in\mathrm{Ob}(\mc H)$ and let $X_i=\Phi^{-1}(V_i)$, $i=1,2$, be the corresponding totally ordered graded sets.
The set $B(V_1,V_2)$ is canonically identified with the set of quintuples $(D_1,\delta_1,D_2,\delta_2,\sigma)$ where $(D_i,\delta_i)$ is a partial ruling of $X_i$ and 
\[
\sigma:X_1\setminus (D_1\cup\delta_1(D_1))\to X_2\setminus (D_2\cup\delta_2(D_2))
\]
is a grading-preserving bijection.
\end{prop}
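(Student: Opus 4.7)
The plan is to reduce to the already-treated special case $W=0$ via the cone identification \eqref{catH_cone}. Concretely, \eqref{catH_cone} gives a bijection $B(V_1,V_2)\cong B(V_2[-1]\otimes V_1,0)$, and the remark immediately preceding the proposition identifies $B(W,0)$ with the set of rulings of $\Phi^{-1}W$ (acyclicity of a complex with flag-preserving differential corresponds exactly to the partial ruling being a full ruling). So it suffices to exhibit a natural bijection between rulings of $Y:=\Phi^{-1}(V_2[-1]\otimes V_1)$ and the quintuples in the statement.

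The first step is to spell out the structure of $Y$ as an ordered graded set. By the formula \eqref{flag_on_direct_sum} for the flag on a monoidal product, $Y$ is the disjoint union of $\Phi^{-1}(V_2[-1])$ and $\Phi^{-1}(V_1)=X_1$, with every element from the first factor coming \emph{below} every element from the second in the total order. By the shift convention $(V[-1])^k=V^{k-1}$, the underlying set $\Phi^{-1}(V_2[-1])$ is just $X_2$ but with grading shifted up by $1$; I write this as $X_2[+1]$.

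The next step is to decompose a ruling $(D,\delta)$ on $Y=X_2[+1]\sqcup X_1$. Since $\delta(i)<i$, any matched pair with $i\in X_2[+1]$ must have $\delta(i)\in X_2[+1]$ too, so the pairs fall into three types: both endpoints in $X_2[+1]$, both endpoints in $X_1$, or $i\in X_1$ with $\delta(i)\in X_2[+1]$. Extracting these gives, respectively, a pair $(D_2,\delta_2)$ on $X_2$, a pair $(D_1,\delta_1)$ on $X_1$, and an injection $\sigma$ from some $\Sigma\subset X_1$ into $X_2$. The grading condition $\deg\delta(i)=\deg i+1$ in the $Y$-grading specializes, after accounting for the $+1$ shift on $X_2[+1]$, to the partial ruling conditions on $(D_1,\delta_1)$ and $(D_2,\delta_2)$ in their own gradings, while for $\sigma$ the shift cancels the $+1$ and leaves the condition $\deg\sigma(i)=\deg i$, i.e.\ $\sigma$ is grading-preserving. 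Because $D$ and $\delta(D)$ are disjoint and cover $Y$, the images $D_1,\delta_1(D_1),\Sigma$ partition $X_1$ and $D_2,\delta_2(D_2),\sigma(\Sigma)$ partition $X_2$, so $\sigma$ is forced to be a bijection
\[
\sigma:X_1\setminus(D_1\cup\delta_1(D_1))\longrightarrow X_2\setminus(D_2\cup\delta_2(D_2)).
\]

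Conversely, given a quintuple $(D_1,\delta_1,D_2,\delta_2,\sigma)$ as in the statement, one assembles a ruling of $Y$ by taking $D:=D_1\sqcup D_2\sqcup\mathrm{dom}(\sigma)$, extending $\delta_1,\delta_2,\sigma$ to a single map $\delta:D\to Y\setminus D$, and noting that the inequality $\delta(i)<i$ holds automatically (the only non-trivial case, $i\in\mathrm{dom}(\sigma)\subset X_1$ with $\delta(i)=\sigma(i)\in X_2[+1]$, is immediate since $X_2[+1]$ sits below $X_1$), while the degree condition follows from the corresponding conditions on $\delta_1,\delta_2,\sigma$ together with the $+1$ shift. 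The two constructions are manifestly inverse, completing the bijection. The main obstacle is purely bookkeeping — keeping track of the grading shift on the $X_2$ factor and of the order convention in \eqref{flag_on_direct_sum} — and there is no substantive analytic content beyond the cone identification \eqref{catH_cone} and Proposition~\ref{prop_dclass}.
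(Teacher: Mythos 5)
Your argument is correct and follows exactly the route the paper itself indicates: reduce via the cone identification \eqref{catH_cone} to $B(V_2[-1]\otimes V_1,0)$, identify that set with rulings of $\Phi^{-1}(V_2[-1]\otimes V_1)$ using Proposition~\ref{prop_dclass}, and then decompose such a ruling into the two partial rulings plus the grading-preserving bijection, with the shift and ordering bookkeeping handled as you describe. The paper leaves these details implicit, so your write-up simply fills in the same proof.
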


\section{The category $\mc S$}
\label{sec_S}

The goal of this section is to obtain explicit bases of $\Hom_{\mc S}(X,Y)$.
In the first subsection we discuss how to reduce the problem to the case $Y=\emptyset$.
In Subsection~\ref{subsec_gen} we provide a small spanning set for $\Hom_{\mc S}(X,\emptyset)$.
In Subsection~\ref{subsec_rulings} we define rulings of tangles with right boundary only, which give a way to construct a natural basis of $\Hom_{\mc S}(X,\emptyset)$.
The final subsection describes two dualities of the category $\mc S$ which are used to simplify proofs later.

\subsection{Bending tangles}
\label{subsec_bend}

Given a graded Legendrian $L$ and $n\in\ZZ$ denote by $L[n]$ the same underlying Legendrian curve but with the grading $k$ on each strand replaced by $k-n$. This induces an autoequivalence of $\mc S$.
There is a canonical isomorphism (even without imposing skein relations)
\begin{equation}\label{tangle_bend_iso}
\Hom_{\mc S}(X,Y) \cong \Hom_{\mc S}(Y[-1]\otimes X,\emptyset)
\end{equation}
which takes a tangle, $L$, and reattaches the left end at the right boundary below the right end of $L$.
This operation looks simpler when viewed under Lagrangian projection and assuming that the boundary $X$ and $Y[-1]$ are placed at an offset in the $y$ direction.
The argument to show this operation is a bijection on Legendrian isotopy classes is then the usual straightening of a cap-cup.

\[
\begin{tikzpicture}[baseline=-\dimexpr\fontdimen22\textfont2\relax,scale=.6]
\draw (-2,-1.5) to (-2,1.5);
\draw (2,-1.5) to (2,1.5);
\draw[thick] (-2,0) to (2,0);
\node[left] at (-2,0) {$Y$};
\node[right] at (2,0) {$X$};
\end{tikzpicture}
\qquad\mapsto\qquad
\begin{tikzpicture}[baseline=-\dimexpr\fontdimen22\textfont2\relax,scale=.6]
\draw (-2,-1.5) to (-2,1.5);
\draw (2,-1.5) to (2,1.5);
\draw[thick] (2,0) to [out=180,in=90] (-1.5,-.25) to [out=-90,in=180] (2,-.7);
\node[right] at (2,-.7) {$Y[-1]$};
\node[right] at (2,0) {$X$};
\end{tikzpicture}
\]
 
Algebraically, we can express \eqref{tangle_bend_iso} as $L\mapsto \beta_Y\circ (1_{Y[-1]}\otimes L)$ where $\beta_Y\in\Hom_{\mc S}(Y[-1]\otimes Y,\emptyset)$ is the tangle in Figure~\ref{fig_tangle_bend} and is a union of several left-cusps, their number and grading corresponding to $Y$.
Given \eqref{tangle_bend_iso}, we reduce the problem of determining $\Hom_{\mc S}(X,Y)$ to the special case where $Y=\emptyset$.

\begin{figure}[ht]
\centering
\begin{tikzpicture}[scale=.6]
\draw[white,line width=1mm] (0,.5) to [out=180,in=0] (-3,-1) to [out=0,in=180] (0,-2.5);
\draw[thick] (0,.5) to [out=180,in=0] (-3,-1) to [out=0,in=180] (0,-2.5);
\draw[white,line width=1mm] (0,2.2) to [out=180,in=0] (-3,.7) to [out=0,in=180] (0,-.8);
\draw[thick] (0,2.2) to [out=180,in=0] (-3,.7) to [out=0,in=180] (0,-.8);
\draw[white,line width=1mm] (0,2.5) to [out=180,in=0] (-3,1) to [out=0,in=180] (0,-.5);
\draw[thick] (0,2.5) to [out=180,in=0] (-3,1) to [out=0,in=180] (0,-.5);
\draw[thick] (0,-3) to (0,3);
\node[right] at (0,1.5) {$Y$};
\node[right] at (0,-1.5) {$Y[-1]$};
\node at (-.5,1.5) {$\vdots$};
\node at (-.5,-1.5) {$\vdots$};
\node at (-2.7,0) {$\vdots$};
\end{tikzpicture}
\caption{The tangle $\beta_Y$.}
\label{fig_tangle_bend}
\end{figure}
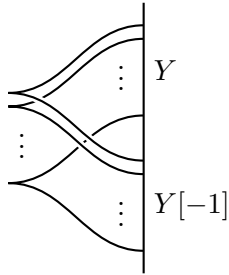

\subsection{Generating the skein}
\label{subsec_gen}

The following gives an upper bound on $\Hom_{\mc S}(X,\emptyset)$ which will be used in the next subsection to determine a basis.

\begin{lemma}\label{lem_tanglegen}
The module $\Hom_{\mc S}(X,\emptyset)$ is generated by tangles which are horizontal compositions of tangles as in Figure~\ref{fig_addcusp} with $n_1=n_2=0$.
\end{lemma}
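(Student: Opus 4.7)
The plan is to give an algorithmic reduction of any tangle representing a morphism in $\Hom_{\mc S}(X,\emptyset)$, using the skein relations \eqref{gs_1f}, \eqref{gs_2f}, \eqref{gs_3f} and Legendrian isotopy, to a $\ZZ[q^{\pm},(q-1)^{-1}]$-linear combination of horizontal compositions of tangles of the type in Figure~\ref{fig_addcusp} with $n_1=n_2=0$. The starting observation is that any tangle decomposes, under horizontal and vertical composition, into the elementary pieces $1_n$, $\lambda_n$, $\rho_n$, $\sigma_{m,n}$ of \eqref{elem_tangles}, so it suffices to eliminate every right cusp $\rho_n$ and every crossing $\sigma_{m,n}$ from an arbitrary diagram, exploiting the empty left boundary.

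First I would eliminate right cusps. Since $\partial_0 L=\emptyset$, the two strands leaving any right cusp to the left must eventually terminate in a left cusp rather than reach the (nonexistent) left boundary. Choose the leftmost right cusp in $L$ and use Legendrian isotopy to bring it adjacent to its matching left cusp(s). Any crossings trapped between them can be traded via \eqref{gs_1f}: the bridge terms on the right-hand side of \eqref{gs_1f} contain neither crossings nor right cusps, and the remaining opposite-crossing term can be processed recursively. When no internal crossings remain, the paired cusps form a closed oval, which \eqref{gs_2f} reduces to the scalar $(q-1)^{-1}$; relation \eqref{gs_3f} is applied in tandem to kill any trivial unknotted arcs that appear. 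Iterating this procedure strictly decreases the total number of right cusps.

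Once only left cusps, identity strands, and crossings remain, I would eliminate crossings by the same method, combining \eqref{gs_1f} with Legendrian isotopies (in particular the Reidemeister-II-type moves near a left cusp). The crucial local model is a crossing between two arcs that both close up via left cusps: applying \eqref{gs_1f} replaces the crossing by its bridge terms (crossing-free) plus an opposite-crossing term which, via a cusp-sliding isotopy, becomes amenable to a further resolution, producing the topological analogue of the Hecke relation $T_i^2=(q-1)T_i+q$. By induction on the number of crossings (with a suitable tiebreaker such as leftmost height) every crossing is eventually eliminated. The surviving crossing-free, right-cusp-free diagrams are nested systems of arcs with left cusps only, and these are manifestly horizontal compositions of left-cusp insertions of the claimed form.

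The main obstacle will be engineering a genuine decrease in the complexity measure, since \eqref{gs_1f} by itself only swaps one crossing for another of opposite sign and does not reduce the crossing count. Termination requires pairing each application of \eqref{gs_1f} with a Legendrian isotopy that places the opposite crossing into a position (adjacent to a cusp) where a subsequent resolution collapses it. Organizing this inductive bookkeeping—choosing a consistent order in which to process right cusps and crossings so that all intermediate terms eventually decrease in a fixed lexicographic invariant—is, I expect, the technical heart of the argument; the individual skein moves are routine in comparison.
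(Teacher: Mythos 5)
There is a genuine gap, and it lies in the stated endpoint of your reduction: you aim to eliminate \emph{all} crossings and land on ``nested systems of arcs with left cusps only,'' but that is both stronger than the lemma and false. The generating tangles of the lemma (Figure~\ref{fig_addcusp} with $n_1=n_2=0$) still contain $n_3$ crossings, and crossing-free fronts with only left cusps realize exactly the \emph{planar} (nested or disjoint) matchings of the right boundary $X$. These do not span $\Hom_{\mc S}(X,\emptyset)$: by Proposition~\ref{prop_nu_iso} this module is free with basis the set of rulings $\mc R(X)$, which includes non-planar pairings. For instance, if $X=\{x_1<x_2<x_3<x_4\}$ with degrees $1,1,0,0$, there are two rulings ($x_3\mapsto x_1,\,x_4\mapsto x_2$ and $x_3\mapsto x_2,\,x_4\mapsto x_1$) but only one planar matching, so the rank-two module cannot be generated by crossing-free diagrams. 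Consequently no sequence of skein moves can carry out the crossing-elimination step you describe; indeed \eqref{gs_1f} only trades a crossing for the opposite crossing (times a power of $q$) unless the degrees satisfy $m=n$ or $m=n+1$, and even in those cases the resolution terms retain the other crossings of the diagram, so there is no complexity measure that drives the crossing count to zero.

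Your first stage (removing right cusps) is in the spirit of the paper, but it is also under-argued: the two strands leaving a right cusp need not cobound a region with a single matching left cusp, and a Legendrian isotopy cannot in general ``bring the cusp adjacent to its matching left cusp(s)''; the paper instead follows Rutherford's case-by-case analysis of the basic tangle immediately to the left of the left-most right cusp, with a nested induction on crossing number. More importantly, after right cusps are gone the paper does \emph{not} try to kill crossings; it only normalizes their position, using \eqref{gs_1f} to arrange that the left cusp whose lower strand reaches the lowest point of $X$ has $n_2=0$ and that no crossings of type $n_3$ occur to its left, and then slides that cusp to the right to split off one factor of the claimed form, inducting on $|X|$. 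You would need to replace your crossing-elimination stage by such a normalization argument for the proof to go through.
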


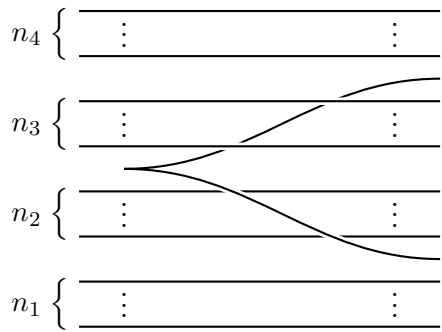
\begin{figure}[ht]
\centering
\begin{tikzpicture}[scale=.6]
\draw[thick] (-4,-3.5) to (4,-3.5);
\node at (-3,-2.85) {\vdots};
\node at (3,-2.85) {\vdots};
\draw[thick] (-4,-2.5) to (4,-2.5);
\draw[white,line width=1mm] (-4,-1.5) to (4,-1.5);
\draw[thick] (-4,-1.5) to (4,-1.5);
\node at (-3,-.85) {\vdots};
\node at (3,-.85) {\vdots};
\draw[white,line width=1mm] (-4,-.5) to (4,-.5);
\draw[thick] (-4,-.5) to (4,-.5);

\draw[white,line width=1mm] (4,2) to [out=180,in=0] (-3,0) to [out=0,in=180] (4,-2);
\draw[thick] (4,2) to [out=180,in=0] (-3,0) to [out=0,in=180] (4,-2);

\draw[thick] (-4,3.5) to (4,3.5);
\node at (-3,3.15) {\vdots};
\node at (3,3.15) {\vdots};
\draw[thick] (-4,2.5) to (4,2.5);
\draw[white,line width=1mm] (-4,1.5) to (4,1.5);
\draw[thick] (-4,1.5) to (4,1.5);
\node at (-3,1.15) {\vdots};
\node at (3,1.15) {\vdots};
\draw[white,line width=1mm] (-4,.5) to (4,.5);
\draw[thick] (-4,.5) to (4,.5);

\node[left] at (-4,3) {$n_4\;\Big{\{}$};
\node[left] at (-4,1) {$n_3\;\Big{\{}$};
\node[left] at (-4,-1) {$n_2\;\Big{\{}$};
\node[left] at (-4,-3) {$n_1\;\Big{\{}$};

\end{tikzpicture}
\caption{A tangle with a single left cusp and $n_2+n_3$ crossings.}
\label{fig_addcusp}
\end{figure}

\begin{proof}
We will arrive at the statement of the lemma via a route of successively stronger claims.

\begin{enumerate}
\item
\textit{Claim: $\Hom_{\mc S}(X,\emptyset)$ is generated by tangles without right cusps.}

This follows from the proof in~\cite[Section 3]{rutherford06}.
Roughly, the idea is to consider the left-most right cusp as part of a tangle mirror to the one in Figure~\ref{fig_addcusp} and perform a case-by-case analysis based on the basic tangle immediately to the left.
In all cases one can, using planar isotopy, Reidemeister moves, and skein relations, reduce either the number of crossings or simplify the part of the tangle to the left of the cusp, and so the claim follows by a nested induction argument.

\item
\textit{Claim: $\Hom_{\mc S}(X,\emptyset)$ is generated by tangles which are compositions of tangles as in Figure~\ref{fig_addcusp}.
Moreover, a given tangle $L$ without right cusps can be written as a linear combination of such tangles which have the same number or less crossings than $L$.}

Using the first claim we may restrict to tangles without right cusps.
One considers the right-most left cusp as part of tangle as in Figure~\ref{fig_addcusp} and the basic tangle, necessarily a crossing, immediately to the right of it.
As before, the case-by-case analysis in~\cite{rutherford06} shows one can reduce the number of crossings to the right of the cusp without increasing the total number of crossings.

\item
\textit{Claim: $\Hom_{\mc S}(X,\emptyset)$ is generated by tangles which are a composition $L_n\circ\cdots\circ L_1$ of tangles $L_1,\ldots,L_n$ as in Figure~\ref{fig_addcusp} and if $k\in\{1,\ldots,n\}$ such that $L_k$ contains the left cusp whose bottom strand connects to the lowest point in $X$ then $n_2=0$ for $L_k$ and $n_3=0$ for $L_1,\ldots L_{k-1}$. }

Let $L$ be a tangle without right cusps. 
We show by induction on the number of crossings in $L$ that it may be written as a linear combination of tangles as in the statement of the claim.
By the previous claim we may assume that $L$ is a composition $L_n\circ\cdots\circ L_1$ of tangles $L_1,\ldots,L_n$ as in Figure~\ref{fig_addcusp}. 
Let $k\in\{1,\ldots,n\}$ such that $L_k$ contains the left cusp whose bottom strand connects to the lowest point in $X$.
The skein relation \eqref{gs_1f} allows us to decrease $n_2$ for $L_k$, should it be positive, modulo terms which have less crossings and are thus taken care of by induction, until $n_2=0$.
Similarly we may decrease $n_3$ for $L_1,\ldots,L_{k-1}$ using \eqref{gs_1f} until reaching a tangle as in the statement of the claim.
\end{enumerate}

Finally, suppose $L=L_n\circ\cdots\circ L_1$ is as in the third claim, then we can use a Legendrian isotopy to move the left-cusp in $L_k$ to the right past all the left cusps in $L_1,\ldots,L_{k-1}$ giving a tangle which factors into a tangle as in Figure~\ref{fig_addcusp} with $n_1=n_2=0$ and a tangle with two boundary points less.
The lemma follows by induction on $|X|$.
\end{proof}

\subsection{Rulings}
\label{subsec_rulings}

Counting \textit{rulings} of Legendrian links provide a way of extracting invariants under isotopy and skein moves.
The technique, originally due to Chekanov--Pushkar~\cite{cp_4conj} based on earlier ideas of Eliashberg~\cite{eliashberg87}, may be used to find a basis of the skein of tangles, i.e. of $\Hom_{\mc S}(X,Y)$ for any $X,Y\in \mathrm{Ob}(\mc S)$.
This will lead to an alternative definition of $\Phi$ and the proof of the main theorem.
Rulings of Legendrian tangles where also considered in~\cite{taosu}, which generalized several results from Legendrian knot theory to tangles.

Let $L$ be a tangle with $\partial_0L=\emptyset$.
Suppose also that the front projection of $L$ is generic in the sense that all singularities are cusps and simple crossings and project to distinct points on the $x$-axis.
A (graded, normal) \textbf{ruling} $\rho$ of $L$ is given by a collection of closed intervals $I_i\subset [0,1]$ and piecewise smooth functions $\alpha_i,\beta_i:I_i\to\RR$, $i=1,\ldots,n$ such that 
\begin{enumerate}[1)]
\item
The graphs of $\alpha_i,\beta_i$ are contained in the front projection of $L$.
\item
For $x\in\partial I_i$, $x\neq 1$, the endpoints $(x,\alpha_i(x))$, $(x,\beta_i(x))$ are the same cusp of $L$.
\item
$\alpha_i(x)\geq \beta_i(x)$, $x\in I_i$, with equality iff $x\in\partial I_i$, $x\neq 1$.
\item
The graphs of $\alpha_1,\ldots,\alpha_n,\beta_1,\ldots,\beta_n$ together cover all of $L$ and intersect only at crossings and cusps.
\item
At a crossing, a path may switch from one strand to another, but only if the strands have the same grading degree, and furthermore the following condition is satisfied.
Let the paths meeting at the crossing be one of $\alpha_i,\beta_i$ and one of $\alpha_j,\beta_j$, then for $x$ just to the left (equivalently right) of the $x$-coordinate of the crossing, the two intervals $[\alpha_i(x),\beta_i(x)]$, $[\alpha_j(x),\beta_j(x)]$ must either be disjoint or one must entirely contain the other (see top row in Figure~\ref{fig_switches_departures}). 
A crossing where a pair of paths jumps strands is called a \textbf{switch} of the ruling.
A ruling is completely determined by its set of switches.
\end{enumerate}

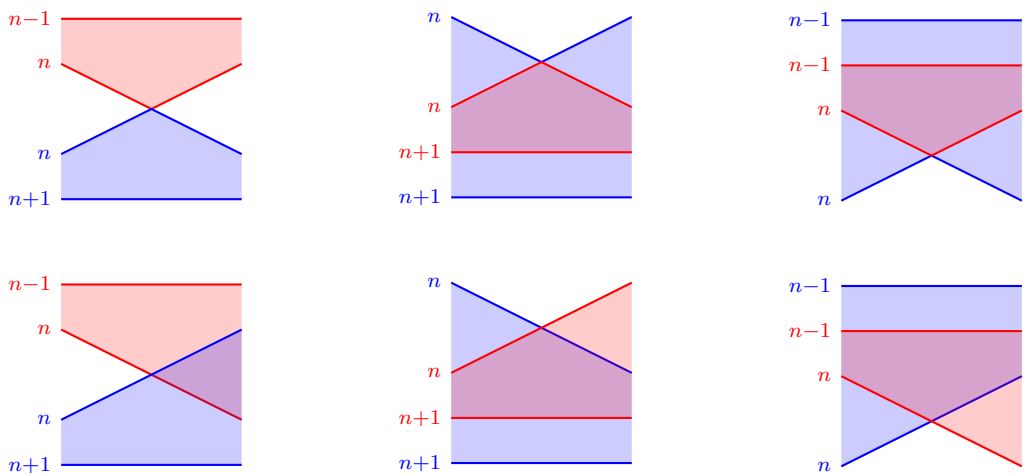
\begin{figure}[h]
\centering
\begin{subfigure}{0.25\textwidth}
\begin{tikzpicture}[baseline=-\dimexpr\fontdimen22\textfont2\relax,scale=.6]
\draw[thick,red] (-2,2) to (2,2);
\draw[thick,red] (-2,1) to (0,0) to (2,1);
\fill[opacity=.2,red] (-2,2) -- (2,2) -- (2,1) -- (0,0) -- (-2,1) -- cycle;
\draw[thick,blue] (-2,-1) to (0,0) to (2,-1);
\draw[thick,blue] (-2,-2) to (2,-2);
\fill[opacity=.2,blue] (-2,-1) -- (0,0) -- (2,-1) -- (2,-2) -- (-2,-2) -- cycle;
\node[left,red] at (-2,2) {$\scriptstyle{n-1}$};
\node[left,red] at (-2,1) {$\scriptstyle{n}$};
\node[left,blue] at (-2,-1) {$\scriptstyle{n}$};
\node[left,blue] at (-2,-2) {$\scriptstyle{n+1}$};
\end{tikzpicture}%
\end{subfigure}\hfil
\begin{subfigure}{0.25\textwidth}
\begin{tikzpicture}[baseline=-\dimexpr\fontdimen22\textfont2\relax,scale=.6]
\draw[thick,blue] (-2,3) to (0,2) to (2,3);
\draw[thick,blue] (-2,-1) to (2,-1);
\fill[opacity=.2,blue] (-2,3) -- (0,2) -- (2,3) -- (2,-1) -- (-2,-1) -- cycle;
\draw[thick,red] (-2,1) to (0,2) to (2,1);
\draw[thick,red] (-2,0) to (2,0);
\fill[opacity=.2,red] (-2,1) -- (0,2) -- (2,1) -- (2,0) -- (-2,0) -- cycle;
\node[left,blue] at (-2,3) {$\scriptstyle{n}$};
\node[left,red] at (-2,1) {$\scriptstyle{n}$};
\node[left,red] at (-2,0) {$\scriptstyle{n+1}$};
\node[left,blue] at (-2,-1) {$\scriptstyle{n+1}$};
\end{tikzpicture}%
\end{subfigure}\hfil
\begin{subfigure}{0.25\textwidth}
\begin{tikzpicture}[baseline=-\dimexpr\fontdimen22\textfont2\relax,scale=.6]
\draw[thick,blue] (-2,-3) to (0,-2) to (2,-3);
\draw[thick,blue] (-2,1) to (2,1);
\fill[opacity=.2,blue] (-2,-3) -- (0,-2) -- (2,-3) -- (2,1) -- (-2,1) -- cycle;
\draw[thick,red] (-2,-1) to (0,-2) to (2,-1);
\draw[thick,red] (-2,0) to (2,0);
\fill[opacity=.2,red] (-2,-1) -- (0,-2) -- (2,-1) -- (2,0) -- (-2,0) -- cycle;
\node[left,blue] at (-2,-3) {$\scriptstyle{n}$};
\node[left,red] at (-2,-1) {$\scriptstyle{n}$};
\node[left,red] at (-2,0) {$\scriptstyle{n-1}$};
\node[left,blue] at (-2,1) {$\scriptstyle{n-1}$};
\end{tikzpicture}%
\end{subfigure}\hfil

\medskip\medskip\medskip
\begin{subfigure}{0.25\textwidth}
\begin{tikzpicture}[baseline=-\dimexpr\fontdimen22\textfont2\relax,scale=.6]
\draw[thick,red] (-2,2) to (2,2);
\draw[thick,red] (-2,1) to (0,0) to (2,-1);
\fill[opacity=.2,red] (-2,2) -- (2,2) -- (2,-1) -- (0,0) -- (-2,1) -- cycle;
\draw[thick,blue] (-2,-1) to (0,0) to (2,1);
\draw[thick,blue] (-2,-2) to (2,-2);
\fill[opacity=.2,blue] (-2,-1) -- (0,0) -- (2,1) -- (2,-2) -- (-2,-2) -- cycle;
\node[left,red] at (-2,2) {$\scriptstyle{n-1}$};
\node[left,red] at (-2,1) {$\scriptstyle{n}$};
\node[left,blue] at (-2,-1) {$\scriptstyle{n}$};
\node[left,blue] at (-2,-2) {$\scriptstyle{n+1}$};
\end{tikzpicture}%
\end{subfigure}\hfil
\begin{subfigure}{0.25\textwidth}
\begin{tikzpicture}[baseline=-\dimexpr\fontdimen22\textfont2\relax,scale=.6]
\draw[thick,blue] (-2,3) to (0,2) to (2,1);
\draw[thick,blue] (-2,-1) to (2,-1);
\fill[opacity=.2,blue] (-2,3) -- (0,2) -- (2,1) -- (2,-1) -- (-2,-1) -- cycle;
\draw[thick,red] (-2,1) to (0,2) to (2,3);
\draw[thick,red] (-2,0) to (2,0);
\fill[opacity=.2,red] (-2,1) -- (0,2) -- (2,3) -- (2,0) -- (-2,0) -- cycle;
\node[left,blue] at (-2,3) {$\scriptstyle{n}$};
\node[left,red] at (-2,1) {$\scriptstyle{n}$};
\node[left,red] at (-2,0) {$\scriptstyle{n+1}$};
\node[left,blue] at (-2,-1) {$\scriptstyle{n+1}$};
\end{tikzpicture}%
\end{subfigure}\hfil
\begin{subfigure}{0.25\textwidth}
\begin{tikzpicture}[baseline=-\dimexpr\fontdimen22\textfont2\relax,scale=.6]
\draw[thick,blue] (-2,-3) to (0,-2) to (2,-1);
\draw[thick,blue] (-2,1) to (2,1);
\fill[opacity=.2,blue] (-2,-3) -- (0,-2) -- (2,-1) -- (2,1) -- (-2,1) -- cycle;
\draw[thick,red] (-2,-1) to (0,-2) to (2,-3);
\draw[thick,red] (-2,0) to (2,0);
\fill[opacity=.2,red] (-2,-1) -- (0,-2) -- (2,-3) -- (2,0) -- (-2,0) -- cycle;
\node[left,blue] at (-2,-3) {$\scriptstyle{n}$};
\node[left,red] at (-2,-1) {$\scriptstyle{n}$};
\node[left,red] at (-2,0) {$\scriptstyle{n-1}$};
\node[left,blue] at (-2,1) {$\scriptstyle{n-1}$};
\end{tikzpicture}%
\end{subfigure}\hfil
\caption{\textbf{Switches} (top row) and \textbf{departures} (bottom row). \textbf{Returns} are departures reflected on the vertical axis.}
\label{fig_switches_departures}
\end{figure}

Let $\mc R(L)$ denote the set of rulings of $L$. 
Any ruling of a tangle induces a ruling of the boundary of $L$ in the following way.
Recall from Subsection~\ref{subsec_flags} that a ruling of a graded subset $X\subset\RR$ is given by a subset $D\subset X$ and a bijection $\delta:D\to X\setminus D$ such that
\begin{equation}\label{set_ruling_cond_}
\delta(x)<x,\qquad \deg(\delta(x))=\deg(x)+1
\end{equation}
for all $i\in D$.
The set of such rulings will be denoted by $\mc R(X)$.
A ruling $\rho$ of a tangle $L$ with $\partial_0L=\emptyset$ induces a ruling $(D,\delta)=\partial \rho$ of $X=\partial_1L$ where $\delta(x)=y$ if there is an $i$ with $\alpha_i(1)=x$ and $\beta_i(1)=y$.

In order to get an invariant under skein moves we need to count each ruling, $\rho\in\mc R(L)$, with a certain weight, $c_\rho\in R:=\ZZ[q^{\pm},(q-1)^{-1}]$ which is a product of factors corresponding to cusps and crossings of $L$, see the table below.
(Recall \eqref{elem_tangles} for the notation of the elementary tangles $\rho_n$, $\lambda_n$, $\sigma_{m,n}$.)

\begin{center}
\begin{tabular}{c|c}
Type of cusp/crossing & Factor of $c_\rho$ \\
\hline
left cusp $\rho_n$ & $(q-1)^{-1}$ \\
right cusp $\lambda_n$ & $1$ \\ 
$\sigma_{m,n}$, $m<n$ & $q^{(-1)^{n-m}}$ \\
$\sigma_{m,n}$, $m>n$ & $1$ \\
$\sigma_{n,n}$, switch & $q-1$ \\
$\sigma_{n,n}$, departure & $1$ \\
$\sigma_{n,n}$, return & $q$
\end{tabular}
\end{center}

Define
\begin{equation*}
\nu(L):=\sum_{\rho\in\mc R(L)}c_\rho \partial\rho\in R^{\mc R(\partial_1L)}
\end{equation*}
as the total count of rulings of $L$.

\begin{lemma}
$\nu(L)$ is invariant under Legendrian isotopy.
Moreover, $\nu$ is compatible with skein relations in the sense that it induces a map 
\begin{equation*}
\nu_X:\Hom_{\mc S}(X,\emptyset)\to R^{\mc R(X)}
\end{equation*}
for any $X\in\mathrm{Ob}(\mc S)$.
\end{lemma}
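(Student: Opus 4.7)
The plan is to prove both claims by a localization argument: a ruling is specified by its set of switches, and the weight $c_\rho$ is a product of local factors over cusps and crossings, so if two tangles $L$ and $L'$ agree outside a small ball $B$ and differ inside it, then rulings of $L$ extending a given ruling of $L|_{[0,1]\setminus B}$ with prescribed pairing/switch data on $\partial B$ are in bijection with local rulings of $L\cap B$ with matching boundary, and their $c$-weights factor as an ``outside'' weight (the same for both) times an ``inside'' weight. Thus each claim reduces to a purely local comparison of weighted counts of local rulings with prescribed boundary behaviour.

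For Legendrian isotopy invariance, graded fronts are connected by a finite list of local moves (the graded Legendrian Reidemeister moves together with planar isotopies). For each such move I would enumerate, for every possible boundary ruling configuration on $\partial B$ and every admissible assignment of Maslov potentials to the involved strands, the local rulings on both sides and verify that the weighted sums agree. Since switches, departures, and returns are only permitted between strands of equal grading, the grading constraints cut down the case analysis significantly. This is exactly the strategy used in \cite{cp_4conj,rutherford06}, and adapted to tangles in \cite{taosu}; I would follow those references for the routine cases.

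For the skein relations I would likewise fix boundary ruling data on $\partial B$ and enumerate local rulings of each tangle. For \eqref{gs_2f}, the disjoint eye-shaped bubble (one left cusp, one right cusp, no crossings) admits a unique ruling with no switches, contributing the factor $(q-1)^{-1}\cdot 1 = (q-1)^{-1}$ that appears on the right-hand side. For \eqref{gs_3f}, the loop inserted on a strand admits no ruling consistent with boundary data: the pairing at the two cusps, combined with the switch/departure/return constraints at the self-crossing and the grading rule, forces an inconsistency, so $\nu$ of this configuration vanishes. For \eqref{gs_1f}, which is the main case, one fixes the two strand gradings $m, n$, enumerates the finitely many local ruling configurations for each of the four tangles, and checks the weighted identity. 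The crossing weight $q^{(-1)^{m-n}}$ accounts for the matching between the two ``through-crossing'' terms, while the Kronecker factors $\delta_{m,n}$ and $\delta_{m,n+1}$ arise because a switch requires equal gradings on the two crossing strands and a cusp-pairing requires the lower strand to be one degree above the upper.

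The main obstacle is the bookkeeping for \eqref{gs_1f}: the $m,n$-dependence is spread across four coefficients, the admissible local rulings on each side change with the relationship between $m,n$, and one must split the verification into the cases $m=n$, $m=n+1$ (the two cases where one of the Kronecker factors fires), and the generic case $\{m,n,n+1\}$ all distinct, each with several boundary sub-cases. Once this case analysis, together with the Reidemeister checks (which are conceptually simpler), is complete, one obtains both assertions: $\nu$ descends to isotopy classes, hence defines $\nu:\Hom_{\mc S}(X,\emptyset)^{\mathrm{free}}\to R^{\mc R(X)}$ on the free module on tangles, and it vanishes on each defining skein relation, so it factors through $\Hom_{\mc S}(X,\emptyset)$ as the claimed map $\nu_X$.
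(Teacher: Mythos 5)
Your plan is the direct combinatorial route, and it is genuinely different from what the paper actually does: the paper performs no local case analysis here at all, but instead derives the lemma a posteriori from Proposition~\ref{prop_phi_rulings}, which identifies $\nu$ on one-sided fronts with the functor $\Phi$, whose invariance under planar isotopy, Reidemeister moves and skein relations is verified algebraically inside $\mc H$ (Proposition~\ref{prop_phi}). Your approach would buy a self-contained, purely combinatorial proof in the spirit of \cite{cp_4conj,rutherford06,taosu}, independent of the later sections; the paper's approach avoids all the bookkeeping at the price of deferring the lemma to the construction of $\Phi$.

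As written, however, your proposal leaves a real gap: the entire content of the lemma is precisely the weighted case analysis that you defer. The weights used here are not those of the references you propose to ``follow for the routine cases'': the left-cusp factor $(q-1)^{-1}$, the return factor $q$, and above all the degree-dependent factors $q^{(-1)^{n-m}}$ at crossings of strands with distinct Maslov potential do not occur in \cite{cp_4conj} or \cite{rutherford06}, so invariance under the graded Reidemeister moves, under the move exchanging the $x$-order of two crossings/cusps (which the paper lists separately and you should too), and the identity \eqref{gs_1f} must actually be checked for these weights rather than cited. Schematically, for \eqref{reid2} with strand degrees $m<n$ the two new crossings contribute $q^{(-1)^{n-m}}q^{(-1)^{n+1-m}}=1$, while for $m=n$ one needs $q^{-1}\cdot q=1$ in the return configuration and $q^{-1}\bigl(1+(q-1)\bigr)=1$ when the same-degree crossing can be a departure or a switch; computations of exactly this kind, for all moves and all degree patterns, constitute the proof, and none are carried out in your write-up. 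One smaller but telling slip: the tangle in \eqref{gs_3f} is a crossingless zigzag (a stabilization), with one left and one right cusp and no self-crossing, so there are no ``constraints at the self-crossing''; the vanishing follows purely from the cusp-pairing condition, since one of the two paths ending at the right cusp must run along the middle branch, which terminates at the left cusp, while its partner continues past that $x$-coordinate on a different strand, violating condition 2) in the definition of a ruling.
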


\begin{proof}
On needs to check invariance under three types of moves: 1) a cusp/crossing passing over another cusp/crossing (coinciding $x$-coordinate), 2) Reidemeister moves (see Figure~\ref{fig_reidemeister}), and 3) skein moves.
This is a lengthy but straightforward case-by-case analysis, c.f. \cite{cp_4conj}, which we omit since invariance will follow instead from Proposition~\ref{prop_phi_rulings} which gives an alternative definition of $\nu_X$.
\end{proof}

\begin{figure}
\begin{align}
\begin{tikzpicture}[scale=.6]
\draw[thick] (-1.5,0) to (1.5,0);
\end{tikzpicture}%
\qquad&\longleftrightarrow\qquad
\begin{tikzpicture}[scale=.6]
\draw[thick] (-1.5,0) to [out=0,in=180] (1,1);
\draw[line width=1mm,white] (-1,1) to [out=0,in=180] (1.5,0);
\draw[thick] (-1,1) to [out=0,in=180] (1.5,0);
\draw[thick] (1,1) to [out=180,in=0] (0,1.5) to [out=180,in=0] (-1,1);
\end{tikzpicture}%
\tag{R1}\label{reid1}\\
\begin{tikzpicture}[baseline=-\dimexpr\fontdimen22\textfont2\relax,scale=.6]
\draw[thick] (-1,-1) to (1,1);
\draw[thick] (-1.5,-.3) to [out=30,in=180] (-.5,0) to [out=180,in=-30] (-1.5,.3);
\end{tikzpicture}%
\qquad&\longleftrightarrow\qquad
\begin{tikzpicture}[baseline=-\dimexpr\fontdimen22\textfont2\relax,scale=.6]
\draw[thick] (-1,-1) to (1,1);
\draw[line width=1mm,white] (-1.5,-1) to [out=30,in=180] (1,0) to [out=180,in=-30] (-1.5,1);
\draw[thick] (-1.5,-1) to [out=30,in=180] (1,0) to [out=180,in=-30] (-1.5,1);
\end{tikzpicture}%
\tag{R2}\label{reid2}\\
\begin{tikzpicture}[baseline=-\dimexpr\fontdimen22\textfont2\relax,scale=.6]
\draw[thick] (-.7,-1) to (1.3,1);
\draw[line width=1mm,white] (-1.5,.3) to (1.5,.3);
\draw[thick] (-1.5,.3) to (1.5,.3);
\draw[line width=1mm,white] (-1.3,1) to (.7,-1);
\draw[thick] (-1.3,1) to (.7,-1);
\end{tikzpicture}%
\qquad&\longleftrightarrow\qquad
\begin{tikzpicture}[baseline=-\dimexpr\fontdimen22\textfont2\relax,scale=.6]
\draw[thick] (-1.3,-1) to (.7,1);
\draw[line width=1mm,white] (-1.5,-.3) to (1.5,-.3);
\draw[thick] (-1.5,-.3) to (1.5,-.3);
\draw[line width=1mm,white] (-.7,1) to (1.3,-1);
\draw[thick] (-.7,1) to (1.3,-1);
\end{tikzpicture}%
\tag{R3}\label{reid3}
\end{align}
\caption{Legendrian Reidemeister moves}
\label{fig_reidemeister}
\end{figure}
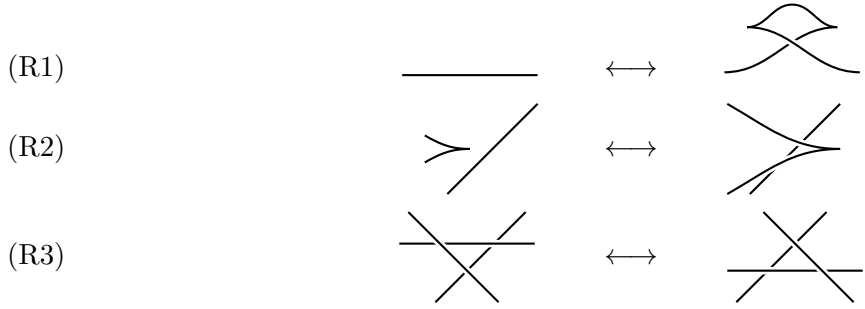

Our next goal is to show that $\nu_X$ is an isomorphism, and thus $\Hom_{\mc S}(X,\emptyset)$ has a basis given by the set of rulings of $X$.
Since $\Hom_{\mc S}(X,Y)\cong \Hom_{\mc S}(Y[-1]\otimes X,\emptyset)$, see \eqref{tangle_bend_iso}, this yields an explicit basis of all $\Hom$-spaces in $\mc S$.

\begin{prop}
\label{prop_nu_iso}
The map $\nu_X:\Hom_{\mc S}(X,\emptyset)\to R^{\mc R(X)}$ is an isomorphism for any $X\in\mathrm{Ob}(\mc S)$.
\end{prop}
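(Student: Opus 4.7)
The plan is to proceed by induction on $|X|$, using Lemma~\ref{lem_tanglegen} to parametrize a spanning set of $\Hom_{\mc S}(X,\emptyset)$ by elements of $\mc R(X)$, and then establish that $\nu_X$ acts triangularly with respect to a suitable order on $\mc R(X)$. The base case $|X|=0$ will follow from the skein relations \eqref{gs_2f} and \eqref{gs_3f}, which reduce any closed tangle to a scalar multiple of the empty tangle, matching the single empty ruling.

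For the inductive step, let $x_1 = \min X$. I first observe that since $\delta(j) < j$ in any ruling $(D,\delta) \in \mc R(X)$, the element $x_1$ cannot lie in $D$, so it must be paired with some $j \in X$ of degree $\deg(x_1) - 1$. This yields a bijection $\mc R(X) \cong \bigsqcup_j \mc R(X \setminus \{x_1, j\})$, indexed by such choices of $j$. The tangle side matches this: Lemma~\ref{lem_tanglegen} says the rightmost factor of a generator is a Figure~\ref{fig_addcusp}-tangle $C_j$ with $n_1=n_2=0$ whose cusp caps $x_1$ with $j = x_{n_3+2}$ satisfying the grading constraint, while the remaining factor is a generator of $\Hom_{\mc S}(X \setminus \{x_1, j\}, \emptyset)$. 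Writing $r = (j, r')$ with $r' \in \mc R(X \setminus \{x_1, j\})$, I would define $L_r := C_j \circ L_{r'}$ recursively, so that $\{L_r\}_{r \in \mc R(X)}$ spans $\Hom_{\mc S}(X, \emptyset)$.

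The central computation will be to establish
\[
\nu_X(L_r) = u_r \cdot r + \sum_{r'' < r} a_{r''}(r) \cdot r'',
\]
for units $u_r \in R^\times$, with respect to the order on $\mc R(X)$ that first compares the partner of $x_1$ (a smaller partner gives a smaller ruling) and then recursively compares the restricted ruling. A ruling of $L_r$ consists of a ruling of $L_{r'}$ together with, at each of the $n_3$ crossings introduced by $C_j$ (where the cusp's upper strand crosses the strands between $x_1$ and $j$), an allowable choice of switch, departure, or return. The \emph{canonical} option—the inductive canonical ruling of $L_{r'}$ with no switches at the new crossings—traces the cusp ruling path to the pair $(x_1, j)$, producing a ruling of $L_r$ with boundary $r$ and weight $u_r$ built from the factor $(q-1)^{-1}$ at the cusp, the (invertible) weights at the new crossings, and the inductive unit $u_{r'}$. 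Any other choice either switches the cusp path at one of the new crossings—which reroutes its endpoint to some strand strictly between $x_1$ and $j$, pairing $x_1$ with a strictly smaller partner—or else keeps the canonical behavior at the new crossings but uses a non-canonical ruling of $L_{r'}$, yielding by induction a smaller restricted ruling. Either way the boundary is strictly below $r$.

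The main obstacle I anticipate is carefully tracking how a switch at one of the $n_3$ new crossings propagates through $L_r$: the switched cusp path no longer pairs cleanly with $(x_1, j)$, and I will need to follow its rerouted terminus on the right boundary while checking that the normality condition on rulings is preserved at every subsequent crossing (otherwise the configuration contributes nothing and only makes the triangular argument stronger). Once the triangular expansion is verified, it follows simultaneously that $\{L_r\}_{r \in \mc R(X)}$ is a free $R$-basis of $\Hom_{\mc S}(X, \emptyset)$ and that $\nu_X$ is an isomorphism, closing the induction.
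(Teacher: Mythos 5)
Your proposal is correct and follows essentially the same route as the paper: induction on $|X|$, the decomposition of $\mc R(X)$ according to the partner of the minimal point, Lemma~\ref{lem_tanglegen} to produce the spanning tangles obtained by attaching a Figure~\ref{fig_addcusp} tangle with $n_1=n_2=0$, and triangularity with unit diagonal entries of $\nu_X$ for the order that compares the partner of the minimal point first (the paper packages this as a commuting square whose induced map on ruling spaces is block upper triangular, and the remark after its proof records exactly your ``$\{L_r\}$ is a basis'' consequence). The only small caveat is the base case: reducing an arbitrary closed tangle to a multiple of the empty tangle uses Lemma~\ref{lem_tanglegen} with $X=\emptyset$, not just the relations \eqref{gs_2f} and \eqref{gs_3f}.
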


\begin{proof}
We prove the statement by induction on the size $|X|$ of $X\in\mathrm{Ob}(\mc S)$.
The base case $|X|=0$ follows from the fact that $\Hom_{\mc S}(\emptyset,\emptyset)$ is generated by the empty link, a special case of Lemma~\ref{lem_tanglegen}.
Also, if $|X|$ is odd then $\Hom_{\mc S}(X,\emptyset)=0$ and $\mc R(X)$ are empty.

In the general case let $y\in X$ be the smallest element and 
\begin{equation*}
J:=\left\{ x\in X \mid \deg(x)=\deg(y)-1\right\}
\end{equation*}
then there is a bijection
\begin{equation}\label{rulings_part}
\mc R(X)\cong \bigsqcup_{x\in J}\mc R\left(X\setminus\{x,y\}\right)
\end{equation}
where a ruling of $X\setminus\{x,y\}$, extended by $\delta(x)=y$, gives a ruling of $X$.
Consider the diagram
\[
\begin{tikzcd}
\bigoplus_{x\in J}\Hom_{\mc S}\left(X\setminus\{x,y\},\emptyset\right) \arrow{d}\arrow{r}{\nu_{X\setminus\{x,y\}}} & \bigoplus_{x\in J} R^{\mc R(X\setminus\{x,y\})} \arrow{d} \\
\Hom_{\mc S}(X,\emptyset) \arrow{r}[swap]{\nu_X} & R^{\mc R(X)}
\end{tikzcd}
\]
where the left vertical arrow is given on $\Hom_{\mc S}\left(X\setminus\{x,y\},\emptyset\right)$ by composition on the right with a tangle of the form shown in Figure~\ref{fig_addcusp} with $n_1=n_2=0$ and the right vertical arrow is defined so that the diagram commutes, which is possible since the top horizontal arrow is an isomorphism by the induction hypothesis.
The left vertical map is surjective by Lemma~\ref{lem_tanglegen}.
Hence, it suffices to show that the right vertical map is an isomorphism to conclude that $\nu_X$ is an isomorphism.

Choose a total order on each of the sets $\mc R\left(X\setminus\{x,y\}\right)$, $y\in J$.
This, together with the rule that 
\begin{equation*}
\delta^{-1}(y)<\delta'^{-1}(y)\implies \delta<\delta'
\end{equation*}
and \eqref{rulings_part} determines a total order of $\mc R(X)$, thus an ordered basis on $R^{\mc R(X)}$.
We also have a corresponding basis of $\bigoplus_{x\in J} R^{\mc R(X\setminus\{x,y\})}$ of the same size.
The matrix of the right vertical arrow in the diagram with respect to these bases is block upper triangular with diagonal blocks which are scalar multiples of the identity. 
The scalars on the diagonal are of the form $q^m(q-1)^n$, so in particular units in $R$.
This follows from considering how a ruling of a tangle $L$ with $\partial_1L=X\setminus\{x,y\}$ extends to a ruling of $L$ composed with a tangle as in Figure~\ref{fig_addcusp}.
The coefficients in the diagonal blocks come from extensions of rulings without additional switches.
\end{proof}

\begin{remark}
The proof of Proposition~\ref{prop_nu_iso} shows that the tangles from Lemma~\ref{lem_tanglegen} form a basis of $\Hom_{\mc S}(X,\emptyset)$, in general different from the basis coming from $\mc R(X)$ which does not have an obvious geometric interpretation.
\end{remark}

Finally, as an example and for later purposes, we compute $\nu_{Y[-1]\otimes Y}(\beta_Y)$ where $\beta_Y$ is the tangle in Figure~\ref{fig_tangle_bend}.
To state the result, recall that $(D,\delta)$ is a \textit{partial ruling} of $Y$ if $D\subseteq Y$, $\delta:D\to Y\setminus D$ is injective and \eqref{set_ruling_cond_} holds, but we do not require that the image of $\delta$ is all of $Y\setminus D$.
Let $\mc R_{\mathrm{part}}(Y)$ denote the set of partial rulings.
Any partial ruling, $(D,\delta)$, of $Y$ gives a ruling $(\overline{D},\overline{\delta})$ of $Y[-1]\otimes Y$ as follows.
Let $n=|Y|$ and identify $Y$ (resp. $Y[-1]\otimes Y$) with $\{1,\ldots,n\}$ (resp. $\{1,\ldots,2n\}$) as ordered sets then
\begin{align*}
\overline{D}&:=D\cup (D+n)\cup \left((Y\setminus (D\cup \delta(D)))+n\right) \\
\overline{\delta}(i)&:=\begin{cases} \delta(i) & i\in D \\ \delta(i-n)+n & i\in D+n \\ i-n & i\in (Y\setminus (D\cup \delta(D))+n\end{cases}
\end{align*}
where $D+n:=\{y+n\mid y\in Y\}$ is the translated set.

\begin{lemma}
\label{lem_beta_rulings}
If $Y\in\Ob(\mc S)$ and $\beta_Y$ is the tangle in Figure~\ref{fig_tangle_bend} then
\begin{equation*}
\nu_{Y[-1]\otimes Y}(\beta_Y)=\sum_{(D,\delta)\in\mc R_{\mathrm{part}}(Y)}(q-1)^{-|Y|+|D|}q^{a(D,\delta)}(\overline{D},\overline{\delta})
\end{equation*}
where 
\[
a(D,\delta):=s(D,\Delta)-\sum_{\substack{1\leq i<j\leq |Y| \\ \deg(i)\leq\deg(j)}}(-1)^{\deg(i)-\deg(j)}
\]
and 
\[
s(D,\delta):=\left|\left\{(i,j)\mid \deg(i)=\deg(j)+1,j\in D,\delta(j)>i\text{ or }i\in\delta(D),\delta^{-1}(i)<j\right\}\right|
\]
as before.
\end{lemma}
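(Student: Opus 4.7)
\emph{Plan.} I would verify the formula by computing $\nu_{Y[-1]\otimes Y}(\beta_Y)$ directly from the definition, namely by enumerating the rulings of $\beta_Y$, computing each weight $c_\rho$, and grouping by boundary ruling. Throughout, write $n:=|Y|$.

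First I would analyse the front projection of $\beta_Y$. It has $n$ left cusps $c_1,\ldots,c_n$ and no right cusps, with $c_k$ connecting position $k$ of $Y[-1]$ to position $k+n$ of $Y$ at the right boundary. The Maslov potential assigns $\deg(k)$ to the upper strand of $c_k$ and $\deg(k)+1$ to its lower strand, matching the grading of $Y[-1]\otimes Y$. For $i<j$ the arcs from $c_i$ and $c_j$ meet in exactly one point, a crossing between the upper strand of $c_i$ (grading $\deg(i)$) and the lower strand of $c_j$ (grading $\deg(j)+1$); a slope check identifies this as a crossing of type $\sigma_{m,n}$ with top-right grading $m=\deg(i)$ and bottom-right grading $n=\deg(j)+1$. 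So there are $\binom{n}{2}$ crossings in total.

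Next I would classify the rulings. A ruling is determined by its set of switches, and switches can occur only at the equal-degree crossings, i.e.\ pairs $i<j$ with $\deg(i)=\deg(j)+1$. Starting from the ``trivial'' matching $\{k,k+n\}_{k=1}^{n}$ at the right boundary, performing the switch at the $(c_i,c_j)$-crossing replaces the pairs $\{i,i+n\},\{j,j+n\}$ by $\{i,j\},\{i+n,j+n\}$. By inductively checking the normal condition on the pair intervals (they must be nested or disjoint at every $x$), one sees that the sets of switches producing a valid ruling are precisely those encoded by a partial ruling $(D,\delta)\in\mc R_{\mathrm{part}}(Y)$, where $D$ is the set of indices $j$ whose lower strand participates in a switch and $\delta(j)=i$ is the unique $i<j$ with $\deg(i)=\deg(j)+1$ at which this switch occurs; tracing the resulting matching on the right boundary reproduces exactly $(\overline{D},\overline{\delta})$.

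Finally I would collect the factors of $c_\rho$ for the ruling $\rho$ corresponding to $(D,\delta)$. The $n$ left cusps contribute $(q-1)^{-n}$ and the $|D|$ switches contribute $(q-1)^{|D|}$, giving the total factor $(q-1)^{-|Y|+|D|}$. Among the non-switch crossings, those with $\deg(i)\le\deg(j)$ (so that $m<n$) each contribute $q^{(-1)^{\deg(j)+1-\deg(i)}}=q^{-(-1)^{\deg(i)-\deg(j)}}$, while those with $\deg(i)\ge\deg(j)+2$ (so that $m>n$) contribute $1$; summing the exponents yields the factor $q^{-\sum_{i<j,\,\deg(i)\le\deg(j)}(-1)^{\deg(i)-\deg(j)}}$, accounting for the second term of $a(D,\delta)$. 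The remaining, equal-degree non-switch crossings (those with $\deg(i)=\deg(j)+1$ but $\delta(j)\neq i$) are each either a departure (weight $1$) or a return (weight $q$); I would argue that such a crossing is a return iff either $j\in D$ with $\delta(j)>i$ (the pair at $c_j$ has already been redirected to a partner lying above $c_i$) or $i\in\delta(D)$ with $\delta^{-1}(i)<j$ (the pair involving $c_i$'s upper strand has already been merged with an earlier cusp). These are exactly the two disjunctive cases in the definition of $s(D,\delta)$, so the returns contribute $q^{s(D,\delta)}$ and the formula follows.

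The main obstacle is the last step: distinguishing departures from returns among the equal-degree non-switch crossings and showing the count equals $s(D,\delta)$. This requires tracking, for each candidate crossing, how the pair intervals arriving from the cusps are reorganised by the switches indexed by $(D,\delta)$ which occur to the right of the crossing, and is exactly the combinatorial content encoded in the two-clause definition of $s(D,\delta)$.
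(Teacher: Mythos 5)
Your proposal follows essentially the same route as the paper: it enumerates the rulings of $\beta_Y$ via partial rulings $(D,\delta)$ of $Y$ and collects the cusp, switch, unequal-degree crossing, and return/departure contributions exactly as in the paper's computation of $c_{\rho(D,\delta)}$, with the same criterion for which equal-degree non-switch crossings are returns. The only difference is one of justification: where you propose a direct inductive check of the normality condition to get the bijection with partial rulings (and to settle the departure/return classification), the paper invokes the Cusp Lemma of Ng--Rutherford, whose content is precisely that the two paths leaving any left cusp encounter at most one switch.
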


\begin{proof}
The tangle $\beta_Y$ has $n=|Y|$ left cusps and $n(n-1)/2$ crossings where the $i$-th upper strand crosses the $j$-th lower strand, $1\leq i<j\leq n$.
Given a partial ruling $(D,\delta)$ of $Y$ define a ruling $\rho(D,\delta)$ of $\beta_Y$ such that the crossings of the form $(\delta(j),j)$, $j\in D$ are switches.
The ruling $\rho(D,\delta)$ of $\beta_Y$ restricts to the ruling $(\overline{D},\overline{\delta})$ of $Y[-1]\otimes Y$.
The \textit{Cusp Lemma} of Ng--Rutherford~\cite{ng_rutherford} implies that this defines a one-to-one correspondence between partial rulings of $Y$ and rulings of $\beta_Y$.
The main point is that the two paths of the ruling starting at a given left cusp encounter at most one switch in total.
Thus, for each left cusp one of three things occurs.
\begin{enumerate}[1)]
\item
Neither of the paths starting at the cusp encounters a switch, so the upper path ends at $Y$ while the lower path ends at $Y[-1]$.
\item 
The lower path encounters a switch, so both paths end at $Y$.
\item
The upper path encounters a switch, so both paths end at $Y[-1]$.
\end{enumerate}
To compute $c_{\rho(D,\delta)}$ we need to determine the contributions from the cusps and crossings.
By definition, the number of switches of $\rho(D,\delta)$ is $|D|$, so the cusps and switches contribute a factor $(q-1)^{-|Y|+|D|}$.
Let $i<j$ and consider the crossing where the $i$-th upper strand, which has degree $\deg(i)$, meets the $j$-th lower strand, which has degree $\deg(j)+1$.
If $\deg(i)\leq \deg(j)$, then the crossing contributes a factor $q^{-(-1)^{\deg(i)-\deg(j)}}$.
If $\deg(i)=\deg(j)+1$, then the crossing is a switch if $i=\delta(j)$, a return if there is a switch on one of the two strands and to the left of the crossing, i.e. if $j\in D$, $\delta(j)>i$ or $i\in\delta(D)$, $\delta^{-1}(i)<j$, and a departure otherwise. 
Thus, in total, we get a contribution $q^{a(D,\delta)}$ from crossings which are not switches.
\end{proof}

\subsection{Dualities}
\label{subsec_S_dualities}

The standard contact form $dz-ydx$ on $\RR^3$ is invariant under $(x,y,z)\mapsto(-x,-y,z)$ and switches sign under $(x,y,z)\mapsto(x,-y,-z)$, thus induce maps on legendrian curves denoted by $D_v$ and $D_h$ respectively.
We can upgrade these to maps of \textit{graded} legendrian curves by the following rule: For $D_v$ keep the same integers on the strands and for $D_h$ reverse the sign of the integers on the strands. 
This rule is essentially forced, up to overall shift, by the condition on the grading near cusps, c.f. the discussion on grading in the introduction.

We claim that the maps $D_v$ and $D_h$ are compatible with the skein relations \eqref{gs_1f}, \eqref{gs_2f}, and \eqref{gs_2f}.
This is fairly easy to see except perhaps for $D_v$ and \eqref{gs_1f}, but obvious for the relation equivalent to \eqref{gs_1f} (up to legendrian isotopy) found in the proof of Proposition~\ref{prop_phi}.
Thus $D_v$ and $D_h$ give functor from $\mc S$ to itself where $D_v$ acts as the identity on objects and is contravariant with respect to composition and covariant with respect to the monoidal product, and $D_h$ acts on objects by $(X,\deg)\mapsto(-X,-\deg)$ where $X\subset\RR$, $\deg:X\to\ZZ$, is covariant with respect to composition and contravariant with respect to monoidal product.
The functors are involutive and commute.
We summarize the discussion in the table below.

\begin{center}
\begin{tabular}{c|c|c}
 & $D_v$ & $D_h$ \\
\hline
mapping & $(x,y,z)\mapsto(-x,-y,z)$ & $(x,y,z)\mapsto(x,-y,-z)$ \\
front & flip on vertical axis & flip on horizontal axis \\
grading & $n\mapsto n$ & $n\mapsto -n$ \\
$\circ$ & contravariant & covariant \\
$\otimes$ & covariant & contravariant
\end{tabular}
\end{center}

\section{The functor $\Phi:\mc S|_q\to\mc H$}
\label{sec_functor}

To define a monoidal functor $\Phi$ from the category of tangles $\mc S$ it suffices to fix the value of $\Phi$ on objects, elementary tangles $\sigma_{m,n}$, $\lambda_n$, $\rho_n$ (see \eqref{elem_tangles}), and check invariance under Legendrian isotopy (planar isotopy and Reidemeister moves) and the skein relations.
Fix a prime power $q$ and let $\mc S|_q$ be the $\QQ$-linear category obtained by base change along the homomorphism $\ZZ[t^{\pm},(t-1)^{-1}]\to\QQ$ sending the formal variable $t$ to the given prime power $q$.
The same remarks about defining functors apply.

The functor $\Phi:\mc S_q\to\mc H$ is defined on objects as follows. 
An object of $\mc S$ is represented by a $\ZZ$-graded set of reals $X\subset\RR$, and $\Phi$ sends it to the graded vector space $V$ over $\FF_q$ with basis $X$ and complete flag such that $F_iV$ has basis the first (smallest) $i$ elements of $X$, $i=0,\ldots,|X|$.
To avoid writing lots of matrices, let $e_{ij}$ denote the matrix with all zeros except for a $1$ in the $i$-th row and $j$-th column.
$\Phi$ is defined on elementary tangles by
\begin{gather*}
\Phi(\sigma_{m,n}):=(q-1)^{-2}\left(0,e_{12}+e_{21},0\right),\\ 
\Phi(\lambda_n):=(q-1)^{-1}\left(e_{12},0,0\right),  \\
\Phi(\rho_n):=(q-1)^{-1}\left(0,0,e_{12}\right).
\end{gather*}
Since $\Phi$ should preserve identity morphisms we also have
\begin{equation*}
\Phi(1_n)=(q-1)^{-1}(0,1,0).
\end{equation*}

Our first task is to compute the value of $\Phi$ on basic tangles.
It will be convenient to introduce the symbol
\[
\tau_{m,n}:=\begin{cases} 1 & m>n \\ q^{(-1)^{m-n}} & m\leq n \end{cases}
\]
which depends only on the difference $m-n$.

\begin{lemma}\label{lem_basic_images}
Let $X,Y\in\Ob(\mc S)$ be $\ZZ$-graded subsets of $\RR$ and let $V=\Phi(X)$, $W=\Phi(Y)$ be the corresponding $\ZZ$-graded vector spaces with complete flag.
\begin{enumerate}
\item
The tangle $1_X=[0,1]\times X\times \{0\}$ maps to $1_V$, see \eqref{h_id}, under $\Phi$.
\item
The tangle $1_X\otimes\lambda_n\otimes 1_Y$ maps to
\begin{gather*}
(q-1)^{-\dim(V\oplus W)-1}\left(\prod_{i=0}^{\infty}\left|\Hom^{-i}_{<0}(V\oplus W,V\oplus W)\right|^{(-1)^{i+1}}\right)\cdot \\
\cdot\sum_B\left(\begin{bmatrix} B_{11} & 0 & B_{12} \\ 0 & \begin{bmatrix} 0 & 1 \\ 0 & 0 \end{bmatrix} & 0 \\ 0 & 0 & B_{22}\end{bmatrix},\begin{bmatrix} 1 & 0 & 0 \\ 0 & 0 & 1 \end{bmatrix},\begin{bmatrix} B_{11} & B_{12} \\ 0 & B_{22} \end{bmatrix} \right)
\end{gather*}
where the sum is over all matrices $B$ giving a filtration decreasing differential on $V\oplus W$.
\item 
Let $E:=\mathbf{k}[-n-1]\oplus\mathbf{k}[-n]$.
The tangle $1_X\otimes\rho_n\otimes 1_Y$ maps to
\begin{gather*}
(q-1)^{-\dim(V\oplus E\oplus W)}\left(\prod_{i=0}^{\infty}\left|\Hom^{-i}_{<0}(V\oplus E\oplus W,V\oplus E\oplus W)\right|^{(-1)^{i+1}}\right)\cdot \\
\cdot\sum_B\left(\begin{bmatrix} B_{11} & B_{13}-B_{12}B_{22}'B_{23} \\ 0 & B_{33} \end{bmatrix},\begin{bmatrix} 1 & 0 \\ 0 & -B_{22}'B_{23} \\ 0 & 1 \end{bmatrix},\begin{bmatrix} B_{11} & B_{12} & B_{13} \\ 0 & B_{22} & B_{23} \\ 0 & 0 & B_{33}\end{bmatrix} \right)
\end{gather*}
where the sum is over all matrices $B$ giving a filtration decreasing differential on $V\oplus E\oplus W$ and such that $B_{22}\neq 0$, thus $B_{22}=\begin{bmatrix} 0 & b \\ 0 & 0 \end{bmatrix}$ with $b\neq 0$ and we set $B_{22}':=\begin{bmatrix} 0 & 0 \\ 1/b & 0 \end{bmatrix}$.
\item
Let $m,n\in\ZZ$, $E:=\mathbf{k}[-m]\oplus\mathbf{k}[-n]$.
The tangle $1_X\otimes\sigma_{m,n}\otimes 1_Y$ maps to
\begin{gather*}
(q-1)^{-\dim(V\oplus E\oplus W)}\tau_{m,n}\left(\prod_{i=0}^{\infty}\left|\Hom^{-i}_{<0}(V\oplus E\oplus W,V\oplus E\oplus W)\right|^{(-1)^{i+1}}\right)\cdot \\
\cdot\sum_B\left(\begin{bmatrix} B_{11} & B_{12}T & B_{13} \\ 0 & 0 & TB_{23} \\ 0 & 0 & B_{33} \end{bmatrix},\begin{bmatrix} 1 & 0 & 0 \\ 0 & T & 0 \\ 0 & 0 & 1 \end{bmatrix},\begin{bmatrix} B_{11} & B_{12} & B_{13} \\ 0 & 0 & B_{23} \\ 0 & 0 & B_{33}\end{bmatrix} \right)
\end{gather*}
where $T=\begin{bmatrix} 0 & 1 \\ 1 & 0 \end{bmatrix}$ and the sum is over all matrices $B$ giving a filtration decreasing differential on $V\oplus E\oplus W$ and such that $B_{22}=0$.
\end{enumerate}
\end{lemma}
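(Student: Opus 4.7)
Part (1) would follow immediately from monoidal bifunctoriality of $\Phi$ combined with $\Phi(1_n)=1_{\mathbf{k}[-n]}$: writing $1_X=\bigotimes_{x\in X}1_{\deg(x)}$ and iterating $1_V\otimes 1_W=1_{V\otimes W}$ would give $\Phi(1_X)=1_V$.

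For parts (2)-(4), my approach is to use the same bifunctoriality to reduce each statement to a computation of $1_V\otimes\Phi(\alpha)\otimes 1_W$ for the elementary tangle $\alpha\in\{\lambda_n,\rho_n,\sigma_{m,n}\}$ in question. I would expand $1_V$ and $1_W$ via \eqref{h_id} as sums over flag-decreasing differentials $d_V,d_W$ and then apply the monoidal product formula from the introduction twice. This would produce a weighted sum indexed by $(d_V,d_W)$ together with two sets of extension data $\delta=(\delta_{11},\delta_{12},\delta_{22})$. The crucial observation is that this collection of data corresponds bijectively to a single flag-decreasing differential $B$ on the big space $V\oplus E\oplus W$: the block diagonal recovers $d_V$, the fixed differential of $\Phi(\alpha)$ on $E$, and $d_W$, while the off-diagonal blocks realize the $\delta$'s, and the various cocycle conditions on the $\delta$'s assemble into $B^2=0$. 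The combined prefactor, coming from $(q-1)^{-\dim V}$ and $(q-1)^{-\dim W}$ in the identities, the scalar in $\Phi(\alpha)$, and the two monoidal-product prefactors, should simplify via the factorization of $\Hom$-complexes of direct sums into the stated $(q-1)^{-\dim(V\oplus E\oplus W)}\prod_i|\Hom^{-i}_{<0}|^{(-1)^{i+1}}$; in the $\lambda_n$ case the middle block of the big space can be decoupled (with the prefactor reducing to the $V\oplus W$ version), which I would justify via Lemma~\ref{lem_monoidal_ext} using the acyclicity of $(E,e_{12})$ to force the relevant $\Ext^1$'s to vanish.

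The point where the three cases genuinely diverge, and where the bulk of the work lies, is identifying the map $f$. For $\lambda_n$ the $\delta_{12}$ contributions should vanish (since $\Hom^0(W,0)=0$), leaving $f$ as the obvious projection. For $\sigma_{m,n}$ the middle swap $T$ would come directly from $\Phi(\sigma_{m,n})$, while the auxiliary factor $\tau_{m,n}$ is meant to emerge from the asymmetric surviving contribution of the $|\Hom^{-i-1}(E,V)|$-terms in the monoidal-product prefactor that only occurs when $m\leq n$. The interesting case is $\rho_n$: here $\delta_{12}$ is not forced to vanish, and the constraint $\delta_{12}d_X+f\delta_{11}=d_V\delta_{12}+\delta_{22}g$ together with the equivalence of triples from Lemma~\ref{lem_monoidal_ext} would let me normalize $\delta_{12}$ so that the middle row of $f$ becomes precisely $-B_{22}'B_{23}$; the same change of basis would modify the $(1,3)$-block of the source differential into $B_{13}-B_{12}B_{22}'B_{23}$. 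The hypothesis $B_{22}\neq 0$ is exactly what permits the quasi-inverse $B_{22}'$ to exist, i.e., for $(E,B_{22})$ to be acyclic, which in turn is required in order that $f$ be a quasi-isomorphism.

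The main obstacle I anticipate is the combinatorial bookkeeping, especially in case (3), where the change of representative from Lemma~\ref{lem_monoidal_ext} must be traced carefully to produce the stated $B_{22}'B_{23}$-corrections without any hidden factor, and in case (4), where $\tau_{m,n}$ requires a precise accounting of how the graded $\Hom^{-i-1}$-factors behave across the crossing. Case (2) should then follow by an analogous but simpler manipulation.
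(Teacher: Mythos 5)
Your proposal is correct and follows essentially the same route as the paper, which packages your ``expand the identities, apply the monoidal product formula twice, normalize homotopy components, track prefactors'' scheme into the two unit-product formulas \eqref{monoidal_prod_unit_left}--\eqref{monoidal_prod_unit_right} before specializing to $\lambda_n$, $\rho_n$, $\sigma_{m,n}$ (including your $-B_{22}'B_{23}$ normalization for $\rho_n$ and the use of acyclicity of $(E,e_{12})$ for the cusp cases). The one misattribution is $\tau_{m,n}$: it does not come from the $\left|\Hom^{-i-1}\right|$-terms of the product prefactor (the raw computation contains no within-$E$ factor at all, since the differentials on $E$ vanish), but is inserted to cancel the factor $\prod_{i\geq 0}\left|\Hom^{-i}_{<0}(E,E)\right|^{(-1)^{i+1}}=\tau_{m,n}^{-1}$ hidden in the uniform normalization by $\Hom_{<0}(V\oplus E\oplus W,V\oplus E\oplus W)$ used in the statement --- a bookkeeping point your planned accounting would surface rather than a gap; likewise in the $\lambda_n$ case only one of your two $\delta_{12}$-components vanishes for degree reasons, the other being removed by a shear transformation exactly as in \eqref{monoidal_prod_unit_left}.
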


\begin{proof}
We begin by computing the monoidal product of a general morphism with a morphism of the form $(d,1,d)$.
We claim that
\begin{equation}
\begin{aligned}
\label{monoidal_prod_unit_left}
\left(d_U,1,d_U\right)&\otimes\left(d_X,g,d_Y\right)=\\
&=\left(\prod_{i=0}^{\infty}\left|\Hom^{-i}(Y,U)\right|^{(-1)^{i+1}}\right)
 \sum_{\delta_{22}}\left(\begin{bmatrix} d_U & \delta_{22}g \\ 0 & d_X \end{bmatrix},\begin{bmatrix} 1 & 0 \\ 0 & g \end{bmatrix},\begin{bmatrix} d_U & \delta_{22} \\ 0 & d_Y \end{bmatrix} \right)
\end{aligned}
\end{equation}
where the sum is over $\delta_{22}\in\Hom^1(Y,U)$ with $d_U\delta_{22}+\delta_{22}d_Y=0$, and 
\begin{equation}
\begin{aligned}
\label{monoidal_prod_unit_right}
\left(d_U,f,d_V\right)&\otimes\left(d_X,1,d_X\right)=\\
&=\left(\prod_{i=0}^{\infty}\left|\Hom^{-i}(X,U)\right|^{(-1)^{i+1}}\right)
 \sum_{\delta_{11}}\left(\begin{bmatrix} d_U & \delta_{11} \\ 0 & d_X \end{bmatrix},\begin{bmatrix} f & 0 \\ 0 & 1 \end{bmatrix},\begin{bmatrix} d_V & f\delta_{11} \\ 0 & d_X \end{bmatrix} \right)
\end{aligned}
\end{equation}
where the sum is over $\delta_{11}\in\Hom^1(X,U)$ with $d_U\delta_{11}+\delta_{11}d_X=0$.

To see \eqref{monoidal_prod_unit_left} apply the definition of $\otimes$ which gives
\begin{align*}
\left(d_U,1,d_U\right)&\otimes\left(d_X,g,d_Y\right)=\\
&=\left(\prod_{i=0}^{\infty}\left(\left|\Hom^{-i}(X,U)\right|\left|\Hom^{-i}(Y,U)\right|\left|\Hom^{-i-1}(X,U)\right|\right)^{(-1)^{i+1}}\right) \\
&\quad\cdot\sum_{\delta}\left(\begin{bmatrix} d_U & \delta_{11} \\ 0 & d_X \end{bmatrix},\begin{bmatrix} 1 & \delta_{12} \\ 0 & g \end{bmatrix},\begin{bmatrix} d_U & \delta_{22} \\ 0 & d_Y \end{bmatrix} \right).
\end{align*}
This becomes \eqref{monoidal_prod_unit_left} after noting that the $\Hom^{-i}(X,U)$ and $\Hom^{-i-1}(X,U)$ terms in the product cancel except for $\left|\Hom^0(X,U)\right|^{-1}$ and that a shear transformation on $X\oplus U$ can be applied to remove $\delta_{12}\in\Hom^0(X,U)$.
The proof of \eqref{monoidal_prod_unit_right} is similar.
We continue with the individual basic tangles.

1. This follows by induction from $1_V\otimes 1_W=1_{V\oplus W}$ (with $\dim W=1$), which in turn follows directly from \eqref{monoidal_prod_unit_left} or \eqref{monoidal_prod_unit_right}.

2. 
Using \eqref{monoidal_prod_unit_left} and \eqref{monoidal_prod_unit_right} we get
\begin{align*}
1_V\otimes\Phi(\lambda_n)&\otimes 1_W=(q-1)^{-\dim(V\oplus W)-1} \\
&\cdot\left(\prod_{i=0}^{\infty}\left(\left|\Hom^{-i}_{<0}(V,V)\right|\cdot \left|\Hom^{-i}_{<0}(W,W)\right|\cdot\left|\Hom^{-i}_{<0}(W,V\oplus E)\right|\right)^{(-1)^{i+1}}\right)\\
&\cdot\sum_{B,b_1,b_2}\left(\begin{bmatrix} B_{11} & 0 & B_{12} \\ 0 & \begin{bmatrix} 0 & 1 \\ 0 & 0 \end{bmatrix} & \begin{matrix}
b_1 \\ b_2 \end{matrix} \\ 0 & 0 & B_{22}\end{bmatrix},\begin{bmatrix} 1 & 0 & 0 \\ 0 & 0 & 1 \end{bmatrix},\begin{bmatrix} B_{11} & B_{12} \\ 0 & B_{22} \end{bmatrix}\right)
\end{align*}
where $E:=\mathbf{k}[-m]\oplus\mathbf{k}[-n]$.
Note that 
\[
\prod_{i=0}^{\infty}\left|\Hom^{-i}_{<0}(W,E)\right|^{(-1)^{i+1}}=\left|\Hom^0(W,k[-n])\right|^{-1}
\]
because of the telescoping product and that in the sum above we can choose 
\[
b_1\in\Hom^1(W,\mathbf k[-n-1])=\Hom^0(W,\mathbf k[-n])
\] 
arbitrary while $b_2=-b_1B_{22}$.
Applying a shear transformation on $E\oplus W$ we find that there are $|\Hom^0(W,\mathbf k[-n])|$ summands equivalent to ones with $b_1=b_2=0$, thus showing the claimed formula for $\Phi(1_X\otimes\lambda_n\otimes 1_Y)$.

3. With $B_{22}$ as in the statement of the lemma we can write
\[
\Phi(\rho_n)=(q-1)^{-\dim E}\sum_{B_{22}}\left(0,0,B_{22}\right).
\]
Again we apply \eqref{monoidal_prod_unit_left} and \eqref{monoidal_prod_unit_right} which after some rearranging of the products gives
\begin{align*}
1_V\otimes\Phi(\rho_n)&\otimes 1_W=(q-1)^{-\dim(V\oplus E\oplus W)}\left(\prod_{i=0}^{\infty}\left|\Hom^{-i}_{<0}(V\oplus E\oplus W,V\oplus E\oplus W)\right|^{(-1)^{i+1}}\right)\cdot \\
&\cdot\left|\Hom^0(W,\mathbf k[-n])\right|\sum_B\left(\begin{bmatrix} B_{11} & B_{13} \\ 0 & B_{33} \end{bmatrix},\begin{bmatrix} 1 & 0 \\ 0 & 0 \\ 0 & 1 \end{bmatrix},\begin{bmatrix} B_{11} & B_{12} & B_{13} \\ 0 & B_{22} & 0 \\ 0 & 0 & B_{33}\end{bmatrix} \right)
\end{align*}
but
\begin{align*}
\left|\Hom^0(W,\mathbf k[-n])\right|&\left(\begin{bmatrix} B_{11} & B_{13} \\ 0 & B_{33} \end{bmatrix},\begin{bmatrix} 1 & 0 \\ 0 & 0 \\ 0 & 1 \end{bmatrix},\begin{bmatrix} B_{11} & B_{12} & B_{13} \\ 0 & B_{22} & 0 \\ 0 & 0 & B_{33}\end{bmatrix} \right)\\
&=\sum_{B_{23}}\left(\begin{bmatrix} B_{11} & B_{13}''-B_{12}B_{22}'B_{23} \\ 0 & B_{33} \end{bmatrix},\begin{bmatrix} 1 & 0 \\ 0 & -B_{22}'B_{23} \\ 0 & 1 \end{bmatrix},\begin{bmatrix} B_{11} & B_{12} & B_{13}'' \\ 0 & B_{22} & B_{23} \\ 0 & 0 & B_{33}\end{bmatrix} \right)
\end{align*}
with $B_{13}'':=B_{13}+B_{12}B_{22}'B_{23}$, $B_{22}'$ as in the statement of the lemma, and $B_{23}=\begin{bmatrix} b_1 \\ b_2 \end{bmatrix}$ with $b_1\in\Hom^1(W,\mathbf k[-n-1])$ arbitrary and $b_2=-b^{-1}b_1B_{33}$.

4. This follows again from \eqref{monoidal_prod_unit_left} and \eqref{monoidal_prod_unit_right} and
\[
\prod_{i=0}^{\infty}\left|\Hom_{<0}^{-i}(E,E)\right|^{(-1)^i}=\prod_{i=0}^{\infty}\left|\Hom^{-i}(\mathbf{k}[-n],\mathbf{k}[-m])\right|^{(-1)^i}=\tau_{m,n}
\]
where $E=\mathbf{k}[-m]\oplus\mathbf{k}[-n]$ as in the statement of the lemma.
\end{proof}

From this, the value of $\Phi$ on a tangle which is presented as a horizontal composition of basic tangles is fixed by the requirement that $\Phi$ is a functor.
\begin{prop}\label{prop_phi}
The above rules define a unique functor $\Phi:\mc S|_q\to\mc H$ of monoidal categories.
This functor is compatible with the dualities: $\Phi\circ D_v\cong D\circ\Phi$ and $\Phi\circ D_h\circ D_v\cong D'\circ\Phi$ where $D'$ is the functor $V\mapsto V^\vee$ induced by vector space duality, see \eqref{Hdual_2}.
\end{prop}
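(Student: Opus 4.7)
The strategy is to use the presentation of $\mc S|_q$ as a monoidal category generated by $1_n,\lambda_n,\rho_n,\sigma_{m,n}$ modulo planar isotopy, the three Reidemeister moves of Figure~\ref{fig_reidemeister}, and the skein relations \eqref{gs_1f}, \eqref{gs_2f}, \eqref{gs_3f}. Since the values of $\Phi$ on objects and elementary tangles are prescribed, uniqueness is automatic once we establish well-definedness, and the latter reduces to verifying that every such relation holds after applying the prescribed formulas. Planar isotopy (the ability to slide crossings and cusps past each other vertically or horizontally when their $x$- or $y$-supports are disjoint) already follows from the bifunctoriality of $\otimes$ in $\mc H$ proven in Subsection~\ref{subsec_h_monoidal}. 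What remains is to verify the three Reidemeister moves and the three skein relations.

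The plan is to leverage Lemma~\ref{lem_basic_images}, which expresses $\Phi(1_X\otimes t\otimes 1_Y)$ for each basic $t$ as an explicit weighted sum over flag-preserving differentials on $\Phi(X)\oplus \Phi(\mathrm{source}\, t)\oplus\Phi(Y)$. Composing two such basic tangles on the nose and comparing with the formula for composition in $\mc H$ given by \eqref{h_comp}, every local identity to be checked becomes a finite identity between sums of triples $(d,f,d')$ indexed by matrices $B$ with constraints coming from $d^2=0$ and $d$-equivariance. In each case, after substituting the formulas of Lemma~\ref{lem_basic_images}, collecting the normalization prefactors (products of $|\Hom^{-i}_{<0}|$), and using the shear transformations already appearing in the proof of that lemma to put triples in normal form, the verification reduces to linear algebra over $\mathbf k$ together with cancellations of telescoping products. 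A useful reduction here is to observe, after an appropriate bending via \eqref{tangle_bend_iso} and Lemma~\ref{lem_Hcone}, that both sides of any local relation can be pushed into $\Hom_{\mc H}(\cdot,0)$, where morphisms are just sums of flag-preserving differentials and equivalences are conjugations, which makes the bookkeeping transparent. The (S2) relation (an unknot evaluating to $(q-1)^{-1}$) is then immediate from the factor $(q-1)^{-1}$ built into $\Phi(\lambda_n)$ and $\Phi(\rho_n)$, and (S3) (a curl being zero) follows because the corresponding sum over $B$ forces a rank condition that no matrix satisfies.

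The main obstacle is the full Hecke-type skein relation \eqref{gs_1f}. Its verification splits according to the degrees $m$ and $n$: the generic case $|m-n|>1$, the case $m=n$ (where the $\delta_{m,n}(q-1)$ term activates), and the case $m=n+1$ (where the $\delta_{m,n+1}(1-q^{-1})$ term activates); the factor $\tau_{m,n}=q^{(-1)^{m-n}}$ from Lemma~\ref{lem_basic_images}(4) is engineered precisely to produce the coefficient $q^{(-1)^{m-n}}$ in \eqref{gs_1f}. In each case, both sides are expressed as sums indexed by filtration-decreasing differentials on $\mathbf k[-m]\oplus\mathbf k[-n]$ equipped with its complete flag, where the possible differentials were classified in Proposition~\ref{prop_dclass}; matching the resulting four types of terms against the four diagrams in \eqref{gs_1f} is the essential computation. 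After this, Reidemeister II follows essentially from \eqref{gs_1f} and \eqref{gs_2f} applied to a pair of parallel strands, Reidemeister III reduces to associativity of the braiding together with (S1), and Reidemeister I (a curl) is handled by (S3).

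For the duality statements, both $D_v$ on $\mc S$ and $D$ on $\mc H$ act as the identity on objects and are contravariant for composition and covariant for $\otimes$, so it suffices to produce a natural isomorphism $\Phi\circ D_v\cong D\circ\Phi$ on elementary tangles, and this is immediate by inspection since $D_v$ swaps $\lambda_n$ with $\rho_n$ and fixes $\sigma_{m,n}$ up to isotopy, while $D$ swaps the roles of source and target differentials. Similarly, $D_h\circ D_v$ negates the grading and is contravariant for $\otimes$, matching vector space duality $D'$; checking on the four elementary tangles together with the compatibility $(V\oplus W)^\vee\cong W^\vee\oplus V^\vee$ discussed in Subsection~\ref{subsec_h_dualities} finishes the proof.
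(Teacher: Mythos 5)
Your overall architecture (uniqueness is automatic; reduce well-definedness to planar isotopy, the Reidemeister moves, and the skein relations; use Lemma~\ref{lem_basic_images} plus the composition formula \eqref{h_comp} to turn each local relation into finite linear algebra; split \eqref{gs_1f} into the cases $m=n$, $m=n+1$ and the rest; get the dualities by checking elementary tangles and invoking variance) is the same as the paper's. But there is a genuine gap where you dispose of the Reidemeister moves: you assert that \eqref{reid2} ``follows essentially from \eqref{gs_1f} and \eqref{gs_2f}'', that \eqref{reid3} ``reduces to associativity of the braiding together with (S1)'', and that \eqref{reid1} ``is handled by (S3)''. None of these is substantiated, and the R3 claim is circular: $\mc H$ is not known to carry a braiding at this stage (the paper explicitly says the braiding on $\mc H$ is only obtained \emph{from} the equivalence with $\mc S|_q$), so there are no braiding axioms in $\mc H$ to appeal to; the Yang--Baxter-type identity \eqref{r3_phi} has to be verified by hand, which the paper does via the observation that the relevant five-fold matrix product lands in the generic Bruhat cell plus a careful bookkeeping of $\tau$-factors. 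Likewise, the Legendrian first Reidemeister move is not ``a curl killed by (S3)'': \eqref{reid1} says that a strand equals the strand with a fish (two cusps and one crossing) and the image must equal $\Phi(1_n)$, whereas \eqref{gs_3f} kills the crossingless zigzag; conflating the two suggests the topological rather than the Legendrian move, and the paper instead computes $\Phi(1_n\otimes\lambda_{n-1})\Phi(\sigma_{n,n}\otimes 1_{n-1})\Phi(1_n\otimes\rho_{n-1})$ directly, using the decomposition of $\Phi(\sigma_{n,n}\otimes 1_{n-1})$ into four Bruhat-type terms and an automorphism count. The same goes for \eqref{reid2}, which the paper checks by an explicit computation with the factors $\tau_{m,n}$. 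Since $\mc S$ is defined as Legendrian isotopy classes modulo skein relations, invariance under R1--R3 is part of what must be proved, and it is in fact the bulk of the work; if you want to derive some of the R-moves from the skein relations inside the free diagram category you must actually carry out those derivations, which you have not done.

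Two smaller points. First, your shortcut of pushing every local relation into $\Hom_{\mc H}(\cdot,0)$ via \eqref{tangle_bend_iso} must be phrased entirely on the $\mc H$ side (post-composing with $\beta_{\Phi Y}\circ(1_{\Phi Y[-1]}\otimes-)$ and using injectivity of \eqref{catH_cone} as in Lemma~\ref{lem_Hcone}); the compatibility of $\Phi$ with the $\mc S$-side bending is Corollary~\ref{cor_2to1commutes}, which comes after this proposition and cannot be used here. Second, the closed-loop relation \eqref{gs_2f} is not purely ``immediate from the prefactors'': the product $(q-1)^{-2}(e_{12},0,0)(0,0,e_{12})$ acquires the extra factor $q-1$ from the sum over automorphisms of the two-step complex in \eqref{h_comp}, which is exactly the paper's computation.
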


\begin{proof}
Note that the value of $\Phi$ on elementary tangles is compatible with the dualities, so compatibility with dualities for general morphisms follows from their covariance/contravariance properties once we have shown that $\Phi$ is well-defined.

We first check invariance of $\Phi$ under planar isotopy.
Any planar isotopy is a composition of the following basic moves: a cusp/crossing passing over another cusp/crossing (i.e. switching the order of their projection to the $x$-axis).
Invariance of $\Phi$ thus follows immediately from the fact that $\mc H$ is monoidal, more specifically that identities of the form
\begin{equation*}
(a\otimes 1)\circ(1\otimes b)=a\otimes b=(1\otimes b)\circ(a\otimes 1)
\end{equation*}
or pictorially
\begin{equation*}
\begin{tabular}{|c|c|}
\hline 
$1$ & $b$ \\
\hline  
$a$ & $1$ \\
\hline
\end{tabular}
=
\begin{tabular}{|c|}
\hline 
$b$ \\
\hline  
$a$ \\
\hline
\end{tabular}
=
\begin{tabular}{|c|c|}
\hline 
$b$ & $1$ \\
\hline  
$1$ & $a$ \\
\hline
\end{tabular}
\end{equation*}
hold.
It remains to show that $\Phi$ is compatible with the Reidemeister moves R1, R2, R3, and the skein relations S1, S2, S3.
For R1 there are two variants obtained by reflection on the horizontal axis and for R2 there are four variants obtained by reflection on both the horizontal and vertical axes.
However, once we show one of these variants the others follow by duality in the category $\mc H$.

\textbf{(R1)} 
The first Reidemeister move is the identity
\begin{equation*}
(1_n\otimes\lambda_{n-1})(\sigma_{n,n}\otimes 1_{n-1})(1_n\otimes\rho_{n-1})=
\begin{tikzpicture}[baseline=-\dimexpr\fontdimen22\textfont2\relax,scale=.8]
\draw[thick] (0,-.5) to (1,-.5) to [out=0,in=180] (2,0) to [out=0,in=180] (2.7,.25) to [out=180,in=0] (2,.5) to (1,.5) to [out=180,in=0] (.3,.25) to [out=0,in=180] (1,0) to [out=0,in=180] (2,-.5) to (3,-.5);
\node[blue,above] at (.5,-.55) {\scriptsize{n}};
\node[blue,above] at (1.5,.45) {\scriptsize{n-1}};
\node[blue,above] at (2.5,-.55) {\scriptsize{n}};
\end{tikzpicture}%
=
\begin{tikzpicture}[baseline=-\dimexpr\fontdimen22\textfont2\relax,scale=.8]
\draw[thick] (0,0) to (1,0);
\node[blue,above] at (.5,0) {\scriptsize{n}};
\end{tikzpicture}%
=1_n
\end{equation*}
so we need to show that
\[
\Phi(1_n\otimes\lambda_{n-1})\Phi(\sigma_{n,n}\otimes 1_{n-1})\Phi(1_n\otimes\rho_{n-1})=\Phi(1_n)
\]
holds.
By Lemma~\ref{lem_basic_images} we have
\begin{align*}
\Phi(1_n)&=(q-1)^{-1}(0,1,0) \\
\Phi(1_n\otimes\lambda_{n-1})&=(q-1)^{-2}(e_{23},e_{11},0) \\
\Phi(1_n\otimes\rho_{n-1})&=(q-1)^{-2}(0,e_{11},e_{23}) \\
\Phi(\sigma_{n,n}\otimes 1_{n-1})&=(q-1)^{-3}\sum_{a,b}(be_{13}+ae_{23},T\oplus 1,ae_{13}+be_{23})\\
&= (q-1)^{-3}\left((0,T\oplus 1,0)+(q-1)(e_{23},T\oplus 1,e_{13})\right. \\
& \quad \left. + (q-1)(e_{13},T\oplus 1,e_{23})+(q-1)^2(e_{23},e_{11},e_{23})\right)
\end{align*}
where the four summands correspond to the cases $a=b=0$, $a\neq 0,b=0$, $a=0,b\neq 0$, and $a,b\neq 0$ respectively.
Note also that we use a homotopy as well as a change of basis to show the equivalence with $(e_{23},e_{11},e_{23})$.
Only the $(e_{23},e_{11},e_{23})$ term contributes to the product
\[
\Phi(1_n\otimes\lambda_{n-1})\Phi(\sigma_{n,n}\otimes 1_{n-1})\Phi(1_n\otimes\rho_{n-1})=(q-1)^{-5}(e_{23},e_{11},0)(e_{23},e_{11},e_{23})(0,e_{11},e_{23})
\]
which is equal to $(q-1)^{-1}(0,1,0)$ using the fact that the filtered complex $\mathbf k[-n]\oplus\mathbf k[-n]\oplus\mathbf k[-n+1]$ with differential $d=e_{23}$ has group of automorphisms $(\mathbf k^\times)^2$ whose size is $(q-1)^{2}$.

\textbf{(R2)} 
The second Reidemeister move is the identity
\[
(\sigma_{m,n+1}\otimes 1_n)(1_{n+1}\otimes\sigma_{m,n})(\rho_n\otimes 1_m)=
\begin{tikzpicture}[baseline=-\dimexpr\fontdimen22\textfont2\relax,scale=.8]
\draw[thick] (0,-.5) to [out=0,in=180] (1,0) to [out=0,in=180] (2,.5) to (3,.5);
\draw[thick] (0,0) to [out=0,in=180] (1,-.5) to (2,-.5) to [out=0,in=180] (2.7,-.25) to [out=180,in=0] (2,0) to [out=180,in=0] (1,.5) to (0,.5);
\node[blue,above] at (.5,.45) {\scriptsize{n}};
\node[blue,above] at (2.5,.45) {\scriptsize{m}};
\node[blue,below] at (1.5,-.45) {\scriptsize{n+1}};
\end{tikzpicture}%
=
\begin{tikzpicture}[baseline=-\dimexpr\fontdimen22\textfont2\relax,scale=.8]
\draw[thick] (0,-.5) to (1,-.5);
\draw[thick] (0,0) to [out=0,in=180] (.7,.25) to [out=180,in=0] (0,.5);
\node[blue,above] at (.5,.35) {\scriptsize{n}};
\node[blue,below] at (.5,-.45) {\scriptsize{m}};
\end{tikzpicture}%
=1_m\otimes\rho_n
\]
so we need to show that
\begin{equation}\label{r2_phi}
\Phi(\sigma_{m,n+1}\otimes 1_n)\Phi(1_{n+1}\otimes\sigma_{m,n})\Phi(\rho_n\otimes 1_m)=\Phi(1_m\otimes\rho_n)
\end{equation}
holds.
By Lemma~\ref{lem_basic_images} we have
\begin{align*}
\Phi(\rho_n\otimes 1_m)&=(q-1)^{-2}(0,e_{13},e_{12}) 
\end{align*}
\begin{align*}
\Phi(1_{n+1}\otimes \sigma_{m,n})&=(q-1)^{-3}\tau_{n+1,m}^{-1}\left(\sum_a(ae_{12},1\oplus T,ae_{13})\right. \\
&\quad\left.+\delta_{m,n}\sum_{a,b\neq 0}(ae_{12}+be_{13},1\oplus T,be_{12}+ae_{13})\right) \\
&=(q-1)^{-2}\tau_{n+1,m}^{-1}(e_{12},1\oplus T,e_{13})+\ldots 
\end{align*}
\begin{align*}
\Phi(\sigma_{m,n+1}\otimes 1_n)&=(q-1)^{-3}\tau_{m,n}^{-1}\left(\sum_a(ae_{13},T\oplus 1,ae_{23})\right. \\
&\quad\left.+\delta_{m,n+1}\sum_{a,b\neq 0}(ae_{13}+be_{23},T\oplus 1,be_{13}+ae_{23})\right) \\
&=(q-1)^{-2}\tau_{m,n}^{-1}(e_{13},T\oplus 1,e_{23})+\ldots
\end{align*}
where the ellipsis ($\ldots$) represents terms which do not contribute to the product~\eqref{r2_phi}.
Looking at the sizes of automorphism groups of filtered complexes in the various cases $m=n$, $m=n+1$, $m\neq n,n+1$ one finds that the product of the first two factors in \eqref{r2_phi} contributes a scalar factor $(q-1)^2\tau_{n+1,m}\tau_{m,n}$ and the product of the second and third factors in \eqref{r2_phi} contributes a scalar factor $(q-1)^2$.
It follows that the left-hand side of \eqref{r2_phi} is equal to
\[
\Phi(1_m\otimes\rho_n)=(q-1)^{-2}(0,e_{11},e_{23})
\]
as claimed.

\textbf{(R3)} 
The third Reidemeister move is the identity
\begin{gather*}
(1_k\otimes\sigma_{n,m})(\sigma_{k,m}\otimes 1_n)(1_m\otimes\sigma_{k,n})=
\begin{tikzpicture}[baseline=-\dimexpr\fontdimen22\textfont2\relax,scale=.8]
\draw[thick] (0,-.5) to (1,-.5) to [out=0,in=180] (2,0) to [out=0,in=180] (3,.5);
\draw[thick] (0,0) to [out=0,in=180] (1,.5) to (2,.5) to [out=0,in=180] (3,0);
\draw[thick] (0,.5) to [out=0,in=180] (1,0) to [out=0,in=180] (2,-.5) to (3,-.5);
\node[blue,below] at (.5,-.45) {\scriptsize{k}};
\node[blue,above] at (1.5,.45) {\scriptsize{n}};
\node[blue,below] at (2.5,-.45) {\scriptsize{m}};
\end{tikzpicture}%
\qquad\\
\qquad=
\begin{tikzpicture}[baseline=-\dimexpr\fontdimen22\textfont2\relax,scale=.8]
\draw[thick] (0,.5) to (1,.5) to [out=0,in=180] (2,0) to [out=0,in=180] (3,-.5);
\draw[thick] (0,0) to [out=0,in=180] (1,-.5) to (2,-.5) to [out=0,in=180] (3,0);
\draw[thick] (0,-.5) to [out=0,in=180] (1,0) to [out=0,in=180] (2,.5) to (3,.5);
\node[blue,above] at (.5,.45) {\scriptsize{m}};
\node[blue,below] at (1.5,-.45) {\scriptsize{n}};
\node[blue,above] at (2.5,.45) {\scriptsize{k}};
\end{tikzpicture}%
=(\sigma_{k,n}\otimes 1_m)(1_n\otimes\sigma_{k,m})(\sigma_{n,m}\otimes 1_k)
\end{gather*}
so we need to show that
\begin{equation}\label{r3_phi}
\Phi(1_k\otimes\sigma_{n,m})\Phi(\sigma_{k,m}\otimes 1_n)\Phi(1_m\otimes\sigma_{k,n})=\Phi(\sigma_{k,n}\otimes 1_m)\Phi(1_n\otimes\sigma_{k,m})\Phi(\sigma_{n,m}\otimes 1_k)
\end{equation}
holds.
We claim that both sides are equal to $(q-1)^{-3}(0,e_{13}+e_{22}+e_{31},0)$ and will show this for the left-hand side, the calculation for the right-hand side being similar.

To simplify the calculation, note first that while each factor on the left hand side of \eqref{r3_phi} potentially has terms with non-zero differential, these do not contribute to the final product.
Furthermore, a five-term product of matrices of the form
\[
(1\oplus T)B_1(T\oplus 1)B_2(1\oplus T)
\]
with $B_1,B_2$ being invertible upper-triangular 3-by-3 matrices, lies in the generic Bruhat-cell, i.e. becomes $e_{13}+e_{22}+e_{31}$ after multiplying by certain upper-triangular matrices on the left and right.
(This is because $(1 3)=(2 3)\circ (1 2) \circ (2 3)$ is a minimal factorization of the longest element in $S_3$ into simple transposition, but can be easily checked directly.)
It only remains to work out the scalar factor.
The horizontal product of the three factors contributes
\[
(q-1)^6\tau_{k,m}\tau_{k,n}\tau_{m,n}\tau_{m,k}\tau_{m,n}\tau_{k,n}
\]
while from the vertical product used to form each factor (or Lemma~\ref{lem_basic_images}) we get
\[
(\tau_{k,m}\tau_{k,n}\tau_{m,n}\tau_{m,k}\tau_{m,n}\tau_{k,n})^{-1}
\]
which shows that the end result comes with a factor $(q-1)^{-3}$.

\textbf{(S1)} 
Modulo Reidemeister moves, the first skein relation can also be written as
\begin{equation*}
\begin{tikzpicture}[baseline=-\dimexpr\fontdimen22\textfont2\relax,scale=.6]
\draw[thick] (-1,-1) to [out=60,in=-120] (-.6,-.15);
\draw[thick] (-.35,.15) to [out=45,in=180] (0,.25) to [out=0,in=120] (1,-1);
\draw[thick] (-1,1) to [out=-60,in=180] (0,-.25) to [out=0,in=-135] (.35,-.15);
\draw[thick] (.6,.15) to [out=60,in=-120] (1,1);
\node[blue,below] at (0,-.2) {\scriptsize{n}};
\node[blue,above] at (0,.2) {\scriptsize{m}};
\draw[dashed] (0,0) circle [radius=1.414];
\end{tikzpicture}%
-q^{(-1)^{m-n}}\;
\begin{tikzpicture}[baseline=-\dimexpr\fontdimen22\textfont2\relax,scale=.6]
\draw[thick] (-1,-1) to [out=60,in=180] (0,-.25) to [out=0,in=120] (1,-1);
\draw[thick] (-1,1) to [out=-60,in=180] (0,.25) to [out=0,in=-120] (1,1);
\node[blue,below] at (0,-.2) {\scriptsize{m}};
\node[blue,above] at (0,.2) {\scriptsize{n}};
\draw[dashed] (0,0) circle [radius=1.414];
\end{tikzpicture}%
=\delta_{m,n}(q-1)\;
\begin{tikzpicture}[baseline=-\dimexpr\fontdimen22\textfont2\relax,scale=.6]
\draw[thick] (-1,-1) to [out=60,in=-150] (-.2,-.1);
\draw[thick] (.2,.1) to [out=30,in=-120] (1,1);
\draw[thick] (-1,1) to [out=-60,in=150] (0,0) to [out=-30,in=120] (1,-1);
\node[blue,rotate=-25,below] at (-.5,-.6) {\scriptsize{n}};
\node[blue,rotate=25,above] at (-.5,.6) {\scriptsize{n}};
\draw[dashed] (0,0) circle [radius=1.414];
\end{tikzpicture}%
-\delta_{m,n+1}(1-q^{-1})\;
\begin{tikzpicture}[baseline=-\dimexpr\fontdimen22\textfont2\relax,scale=.6]
\draw[thick] (-1,-1) to [out=60,in=180] (-.3,0) to [out=180,in=-60] (-1,1);
\draw[thick] (1,-1) to [out=-120,in=0] (.3,0) to [out=0,in=-120] (1,1);
\node[blue,rotate=-25,above] at (.5,.6) {\scriptsize{n}};
\node[blue,rotate=25,above] at (-.5,.6) {\scriptsize{n}};
\draw[dashed] (0,0) circle [radius=1.414];
\end{tikzpicture}%
\end{equation*}
We verify the cases $m=n$, $m=n+1$, and $m\neq n,n+1$ separately.
For $n=m$ we compute
\begin{align*}
\Phi(\sigma_{n,n})\Phi(\sigma_{n,n})&=(q-1)^{-4}(0,T,0)(0,T,0) \\
&=(q-1)^{-2}(0,1,0)+(q-1)^{-1}(0,T,0) \\
&=q\Phi(1_n\otimes 1_n)+(q-1)\Phi(\sigma_{n,n}),
\end{align*}
for $m=n+1$ we have
\begin{align*}
\Phi(1_{n+1}\otimes 1_n)&=(q-1)^{-2}(0,1,0)+(q-1)^{-1}(e_{12},1,e_{12}) \\
&=q\Phi(\sigma_{n+1,n})\Phi(\sigma_{n,n+1})+(q-1)\Phi(\rho_n)\Phi(\lambda_n),
\end{align*}
while for $m\neq n,n+1$ we have
\begin{align*}
\Phi(\sigma_{m,n})\Phi(\sigma_{n,m})=(q-1)^{-2}\tau_{n,m}(0,1,0)=q^{(-1)^{m-n}}\Phi(1_m\otimes 1_n)
\end{align*}
where we used that $\tau_{m,n}\tau_{n,m}=q^{(-1)^{m-n}}$ for $m\neq n$.

\textbf{(S2)} 
The claim is that 
\[
\Phi(\lambda_n\otimes 1_{n-1})\Phi(1_{n+1}\otimes\rho_{n-1})=0
\]
which follows from the fact that the filtered complexes $(C,d)$ and $(C,d')$ with
\[
C:=\mathbf k[-n-1]\oplus\mathbf k[-n]\oplus\mathbf k[-n+1],\qquad d:=e_{12},\qquad d':=e_{23}
\]
are not isomorphic.

\textbf{(S3)} 
To see that
\[
\Phi(\lambda_n)\Phi(\rho_n)=(q-1)^{-2}(e_{12},0,0)(0,0,e_{12})=(q-1)^{-1}
\]
note that the elementary two-step complex has group of automorphisms $\mathbf k^\times$.
\end{proof}

\begin{prop}\label{prop_phi_rulings}
Let $X\in\mathrm{Ob}(\mc S)$. 
Under the identification $\Hom_{\mc H}(\Phi(X),0)\cong \QQ^{\mc R(X)}$ (see Subsection~\ref{subsec_flags}) we have
\begin{equation}
\Phi=\nu:\Hom_{\mc S|_q}(X,\emptyset)\to\Hom_{\mc H}(\Phi(X),0)
\end{equation}
i.e. the value of $\Phi$ on a one-sided tangle is equal to a weighted count of rulings.
\end{prop}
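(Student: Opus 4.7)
The plan is to prove the identity by induction on the cardinality $|X|$. The base case $|X|=0$ is immediate: both $\Phi$ and $\nu$ send the empty tangle to $1\in\QQ = \Hom_{\mc H}(0,0) = \QQ^{\mc R(\emptyset)}$.

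For the inductive step, I would apply Lemma~\ref{lem_tanglegen}: every element of $\Hom_{\mc S|_q}(X,\emptyset)$ is a linear combination of tangles $L'\circ C$, where $C$ is a cusp-adding tangle as in Figure~\ref{fig_addcusp} with $n_1=n_2=0$ and boundaries $\partial_1 C = X$, $\partial_0 C = X'$ with $|X'|=|X|-2$, and $L'\in\Hom_{\mc S|_q}(X',\emptyset)$. By functoriality of $\Phi$ (Proposition~\ref{prop_phi}) and the inductive hypothesis,
\[
\Phi(L'\circ C) = \Phi(L')\circ \Phi(C) = \nu(L')\circ \Phi(C),
\]
while $\nu(L'\circ C) = E_C(\nu(L'))$, where $E_C:\QQ^{\mc R(X')}\to \QQ^{\mc R(X)}$ is the weighted ruling-extension map implicit in the proof of Proposition~\ref{prop_nu_iso}: a ruling of $X'$ extends to a ruling of $X$ by declaring $\delta(x)=y$ with $y\in X$ the lowest element and $x\in X$ the point linked to it through the cusp, weighted by the scalar contributions from any switches, returns, or departures at the $n_3$ crossings inside $C$. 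So it will suffice to show that post-composition by $\Phi(C)$, viewed via $B(V,0)\cong \mc R(\Phi^{-1}V)$ (Subsection~\ref{subsec_cones}) as a map $\QQ^{\mc R(X')} \to \QQ^{\mc R(X)}$, coincides with $E_C$.

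The final step is a local computation. Using Lemma~\ref{lem_basic_images} one expands $\Phi(C)$ as a weighted sum of elementary triples $(d_V, f, d_{V'})$ with $V=\Phi(X)$, $V'=\Phi(X')$, in which $d_V$ is obtained from $d_{V'}$ by adjoining an arrow between the two basis vectors coming from the cusp, together with possible shear contributions from the crossings. The composition formula~\eqref{h_comp} then yields that post-composing a basis element $(d_{V'},0,0)$ (corresponding to a ruling $\rho'$ of $X'$) with such a triple produces a weighted sum of basis elements $(d_V,0,0)$ in which $d_V$ encodes precisely the ruling $\rho$ obtained by extending $\rho'$ with the new switch at the cusp. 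The hard part will be the precise bookkeeping of scalar weights — powers of $q$, $(q-1)^{\pm 1}$, the automorphism-group sizes from Subsection~\ref{subsec_flags}, and the crossing factors $\tau_{m,n}$ — so that the coefficient of the extended ruling in $\Phi(C)(\rho')$ matches exactly the product of weights specified in the table defining $\nu$. As an independent consistency check, I would verify that the explicit formula for $\nu(\beta_Y)$ in Lemma~\ref{lem_beta_rulings} agrees with the iterated expansion of $\Phi(\beta_Y)$ via Lemma~\ref{lem_basic_images} together with Lemma~\ref{lem_Hcone}, which relates $\beta_Y\circ(1_{Y[-1]}\otimes-)$ to the cone construction in $\mc H$.
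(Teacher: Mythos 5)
Your overall mechanism (peel off the rightmost piece of a tangle, use functoriality of $\Phi$, and match post-composition with $\Phi$ of that piece against its effect on weighted ruling counts) is the same as the paper's, but the way you set up the induction leaves a genuine gap. You induct on $|X|$ and only ever treat tangles of the special form $L'\circ C$ supplied by Lemma~\ref{lem_tanglegen}; to pass from these generators to an arbitrary one-sided tangle $L$ you must write $[L]=\sum_i c_i[L_i'\circ C_i]$ in the skein module and then need $\nu(L)=\sum_i c_i\,\nu(L_i'\circ C_i)$, i.e.\ the invariance of $\nu$ under Legendrian isotopy and the skein relations. But that invariance is precisely the statement whose case-by-case verification the paper omits, on the grounds that it ``will follow instead from Proposition~\ref{prop_phi_rulings}''; so your argument is circular within the paper's logical structure, or, read charitably, it proves the identity only on a spanning set rather than for ``the value of $\Phi$ on a one-sided tangle''. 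The paper's proof avoids this by inducting on the number of crossings and cusps of an individual tangle: it writes the tangle as $L\circ B$ with $B$ a single basic piece (one cusp or one crossing tensored with identities), shows the square with $\circ B$ on top and $\circ\,\Phi(B)$ on the bottom commutes in three cases via Lemma~\ref{lem_basic_images}, and thereby proves $\Phi(L)=\nu(L)$ for every tangle, obtaining the invariance of $\nu$ as a consequence rather than using it as an input.

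Two further points. First, your map $E_C$ is not a single weighted extension ``$\delta(x)=y$'': since the upper strand of the new cusp crosses the $n_3$ strands inside $C$, a ruling of $L'\circ C$ may switch at any of those crossings, so the induced boundary ruling on $X$ varies with the switch configuration; this is exactly why the matrix in the proof of Proposition~\ref{prop_nu_iso} is only block upper triangular rather than diagonal. As literally stated, $\nu(L'\circ C)=E_C(\nu(L'))$ with your $E_C$ is false, and repairing it forces you to carry out the crossing analysis anyway, at which point peeling off one elementary tangle at a time (as the paper does) is no harder and covers all tangles. Second, the matching of $\Phi(C)$ against the corrected $E_C$ --- the scalar bookkeeping with $(q-1)$, $q$, $\tau_{m,n}$ and automorphism counts, and in the crossing case the subtlety that the set of triples appearing in Lemma~\ref{lem_basic_images} is not invariant under flag-preserving automorphisms of the target unless one sums over matrices of the form $Tb$ --- is the actual substance of the proof and is deferred in your plan rather than carried out.
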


\begin{proof}
Suppose $L$ is a tangle with $\partial_0L=\emptyset$ and $\partial_1L=X$.
We prove by induction on the number of crossings and cusps of $L$ that $\Phi(L)=\nu(L)$.
Let $B$ be a basic tangle (i.e. with a single cusp or crossing) such that $\partial_0B=X$ and set $X':=\partial_1B$.
We get a diagram
\begin{equation}\label{add_btangle}
\begin{tikzcd}
\Hom_{\mc S|_q}(X,\emptyset) \arrow{r}{\circ B}\arrow{d}{\nu} & \Hom_{\mc S|_q}(X',\emptyset) \arrow{d}{\nu} \\
\Hom_{\mc H}(\Phi X,0) \arrow{r}{\circ\Phi B} & \Hom_{\mc H}(\Phi X',0)
\end{tikzcd}
\end{equation}
which we need to show commutes since this would imply
\[
\Phi(L\circ B)=\Phi L\circ \Phi B=\nu(L)\circ \Phi B=\nu(L\circ B)
\]
proving the claim for the tangle $L\circ B$ with one more cusp/crossing than $L$.
The lower arrow in \eqref{add_btangle} can be described more explicitly with the help of Lemma~\ref{lem_basic_images} which gives formulas for $\Phi B$.
On the other hand, the map which follows the diagram from the lower-left corner to the lower-right corner via the upper path is described as follows: Take a ruling $(D,\delta)$ of $X$, then it maps to $\sum_{\rho} c_\rho\partial_1\rho$ where $\rho$ ranges over all rulings of $B$ which extend $(D,\delta)$ to $B$.
Here we are using a slight generalization of the notion of a ruling for tangles with possibly both boundaries non-empty. 
We will describe this map explicitly in each of the three cases, depending on the type of $B$.

Suppose first that $B$ has a left cusp, so $B=1_Y\otimes \lambda_n\otimes 1_Z$ where $Y\otimes Z=X$.
The map
\[
\QQ^{\mc R(Y\otimes Z)}\to\QQ^{\mc R(Y\otimes\partial_1\lambda_n\otimes Z)}
\]
is given by
\[
(D,\delta)\mapsto (q-1)^{-1}(D',\delta')
\]
where, if $\partial_0\rho_n=\{y,z\}$ with $y<z$, then $(D',\delta')$ is the extension of the ruling $(D,\delta)$ of $Y\otimes Z$ with $D'=D\cup\{z\}$ and $\delta'(z)=y$.
It follows from Lemma~\ref{lem_basic_images} that composition with $\Phi B$ on the right has the same effect, i.e. maps $(d(D,\delta),0,0)$ to $(q-1)^{-1}(d(D',\delta'),0,0)$.
To see this, note that in the formula for $\Phi B$ the set of triples which appear is preserved under the action of filtration-preserving automorphisms on the target $V\oplus W$. 
Thus in the formula for the composition in $\mc H$ we do not need to take the sum over all $b$ but just one particular one and add an overall factor counting the number of filtration preserving isomorphisms.
Finally note that if we set $B=d(D,\delta)$ in the triple in the formula of $\Phi B$ from Lemma~\ref{lem_basic_images}, then the differential on the source is $d(D',\delta')$.

Suppose that $B$ has a right cusp, so $B=1_Y\otimes \rho_n\otimes 1_Z$ where $Y\otimes \partial_0\rho_n\otimes Z=X$.
Let $\partial_0\rho_n=\{y,z\}$ with $y<z$.
The map
\[
\QQ^{\mc R(Y\otimes\partial_0\rho_n\otimes Z)}\to\QQ^{\mc R(Y\otimes Z)}
\]
sends rulings with $z\in D$ and $\delta(z)=y$ to their restriction to $Y\otimes Z$ and all other rulings to zero.
From Lemma~\ref{lem_basic_images} we see by the same argument as in the previous case that composition with $\Phi B$ on the right has the same effect.

Finally, suppose that $B$ has a crossing, so $B=1_Y\otimes \sigma_{m,n}\otimes 1_Z$ where $X=Y\otimes \partial_0\sigma_{m,n}\otimes Z$.
Let $\partial_0\sigma_{m,n}=\{y,z\}$ with ordering $y<z$ and  $\partial_1\sigma_{m,n}=\{z,y\}$ with ordering $z<y$. 
The map
\[
\QQ^{\mc R(Y\otimes\partial_0\sigma_{m,n}\otimes Z)}\to\QQ^{\mc R(Y\otimes\partial_1\sigma_{m,n}\otimes Z)}
\]
is described as follows.
In the case $m\neq n$ a ruling $(D,\delta)$ gets sent to zero if $z\in D$ and $\delta(z)=y$ and to $\tau_{m,n}(D,\delta)$ otherwise.
In the case $m=n$ a ruling $(D,\delta)$ gets sent to $q(D,\delta)$ if when extending the ruling to $B$ without a switch, the crossing becomes a return and to $(D,\delta)+(q-1)(D',\delta')$ if the crossing becomes a departure, where $(D',\delta')$ is obtained from $(D,\delta)$ by switching the roles of $y$ and $z$, i.e. is the same ruling if the two sets are identified via the order preserving bijection.
From Lemma~\ref{lem_basic_images} we see that composition with $\Phi B$ on the right has the same effect.
Unlike the previous cases, the set triples which appear in the formula for $\Phi B$ is not invariant under flag preserving automorphism of the target, but this becomes true when instead of using the fixed $T$ we take the sum over all elements of the form $Tb$ where $b$ is a two-by-two upper triangular matrix.
The same reasoning as in the previous cases then works.
The computation is essentially the same as the one which verified that the skein relation S1 holds in $\mc H$ in the proof of Proposition~\ref{prop_phi}.
\end{proof}

We previously introduced the tangle $\beta_Y\in\Hom_{\mc S}(Y,\emptyset)$ and the morphism $\beta_W\in\Hom_{\mc H}(W,0)$.
As the notation suggests, these correspond to one another under $\Phi$.

\begin{prop}
Let $Y\in\Ob(\mc S)$, then $\Phi(\beta_Y)=\beta_{\Phi Y}$.
\end{prop}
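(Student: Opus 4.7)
My plan is to compute both sides in the explicit basis of $\Hom_{\mc H}(\Phi(Y[-1]\otimes Y), 0)$ provided by Proposition~\ref{prop_dclass} (applied with second object $0$), which identifies equivalence classes of triples $(d_V,0,0)$ with rulings of $\Phi^{-1}(Y[-1]\otimes Y)$. By Proposition~\ref{prop_phi_rulings} together with Lemma~\ref{lem_beta_rulings} one already has the closed form
\[
\Phi(\beta_Y) \;=\; \nu_{Y[-1]\otimes Y}(\beta_Y) \;=\; \sum_{(D,\delta)\in\mc R_{\mathrm{part}}(Y)} (q-1)^{-|Y|+|D|}\, q^{a(D,\delta)}\,(\overline{D},\overline{\delta}),
\]
so the task reduces to expanding $\beta_{\Phi Y}$ in this same basis and matching coefficients term by term.

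Write $W := \Phi Y$ and $n := \dim W$. The next step is to unfold the defining sum of $\beta_W$ over flag-preserving differentials on $W$ and regroup it by orbits of the conjugation action of $\mathrm{Aut}_<(W)$, the group of flag- and grading-preserving automorphisms of $W$. By Proposition~\ref{prop_dclass} these orbits are in bijection with $\mc R_{\mathrm{part}}(Y)$, and the counts recorded in Subsection~\ref{subsec_flags} give $|\mathrm{Aut}_<(W)| = (q-1)^n q^m$ and $|\mathrm{Aut}(W, d(D,\delta))| = (q-1)^{n-|D|}q^{m-s(D,\delta)}$, so each orbit has size $(q-1)^{|D|} q^{s(D,\delta)}$. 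I then need to identify the triple
\[
T(D,\delta) \;:=\; \left(\begin{bmatrix} -d(D,\delta) & 1 \\ 0 & d(D,\delta) \end{bmatrix},\, 0,\, 0\right)
\]
with the class of the ruling $(\overline D, \overline \delta)$ of $Y[-1]\otimes Y$ from Lemma~\ref{lem_beta_rulings}. For this I would introduce a new basis of $W[-1]\oplus W$ by setting $f_i := -b_i$ and $f_{\delta(i)+n} := b_{\delta(i)+n}+b_i$ for each $i \in D$, leaving all other basis vectors unchanged. Both substitutions respect the flag and grading because $b_i\in W[-1]$ and $b_{\delta(i)+n}\in W$ share the degree $\deg(y_i)+1$ whenever $i\in D$, and $i<\delta(i)+n$. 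A direct check shows that in this basis the differential $\begin{bmatrix} -d(D,\delta) & 1 \\ 0 & d(D,\delta) \end{bmatrix}$ acquires the standard form $d(\overline D, \overline \delta)$.

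With these two ingredients the defining sum of $\beta_{\Phi Y}$ regroups as
\[
\beta_{\Phi Y} \;=\; (q-1)^{-n}\,\prod_{i\geq 0}|\Hom^{-i}_{<0}(W,W)|^{(-1)^{i+1}}\,\sum_{(D,\delta)}(q-1)^{|D|}\, q^{s(D,\delta)}\,(\overline{D},\overline{\delta}),
\]
and matching with $\Phi(\beta_Y)$ reduces to the scalar identity $\prod_{i\geq 0}|\Hom^{-i}_{<0}(W,W)|^{(-1)^{i+1}} = q^{a(D,\delta)-s(D,\delta)}$. The right-hand exponent is independent of $(D,\delta)$: by the definition of $a$ in Lemma~\ref{lem_beta_rulings} it equals $-\sum_{i<j,\,\deg(y_i)\leq\deg(y_j)}(-1)^{\deg(y_i)-\deg(y_j)}$. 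On the left, $|\Hom^{-i}_{<0}(W,W)| = q^{N_i}$ with $N_i = |\{(j,k): j<k,\ \deg(y_k)-\deg(y_j)=i\}|$, and reindexing pairs $(j,k)$ with $\deg(y_j)\leq\deg(y_k)$ by the nonnegative quantity $i=\deg(y_k)-\deg(y_j)$ yields $\sum_{i\geq 0}(-1)^{i+1}N_i = -\sum_{j<k,\,\deg(y_j)\leq\deg(y_k)}(-1)^{\deg(y_j)-\deg(y_k)}$, which coincides with $a(D,\delta)-s(D,\delta)$. The main obstacle will be the explicit Bruhat-type normalisation identifying $T(D,\delta)$ with $(\overline D, \overline \delta)$; once that is established, the remainder is bookkeeping with automorphism-group sizes and a single re-indexing of pairs by degree difference.
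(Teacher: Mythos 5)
Your proposal is correct and follows essentially the same route as the paper: compute $\Phi(\beta_Y)=\nu(\beta_Y)$ via Proposition~\ref{prop_phi_rulings} and Lemma~\ref{lem_beta_rulings}, and regroup the defining sum of $\beta_{\Phi Y}$ into conjugation orbits of size $(q-1)^{|D|}q^{s(D,\delta)}$ using the automorphism counts of Subsection~\ref{subsec_flags}. Your explicit upper-triangular change of basis identifying $\left(\begin{bmatrix} -d(D,\delta) & 1 \\ 0 & d(D,\delta)\end{bmatrix},0,0\right)$ with $(\overline D,\overline\delta)$, and the exponent identity matching $\prod_{i\geq 0}|\Hom^{-i}_{<0}(W,W)|^{(-1)^{i+1}}$ with $q^{a(D,\delta)-s(D,\delta)}$, correctly supply the details the paper leaves implicit.
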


\begin{proof}
This follows from Lemma~\ref{lem_beta_rulings}, where $\nu(\beta_Y)$ was computed, and Proposition~\ref{prop_phi_rulings}.
Note that it follows from the discussion in Subsection~\ref{subsec_flags} that
\[
\sum_d\left(\begin{bmatrix} -d & 1 \\ 0 & d\end{bmatrix},0,0\right)=\sum_{(D,\delta)\in\mc{R}_{\mathrm{part}}(Y)}(q-1)^{|D|}q^{s(D,\delta)}\left(\begin{bmatrix} -d(D,\delta) & 1 \\ 0 & d(D,\delta) \end{bmatrix},0,0\right)
\]
where the sum on the left ranges over differentials on $\Phi Y$.
\end{proof}

\begin{coro}
\label{cor_2to1commutes}
Let $X,Y\in\Ob(\mc S)$ then the diagram
\begin{equation}\label{one_two_sided_diagram}
\begin{tikzcd}
\Hom_{\mc S|_q}(X,Y) \arrow{r}{\Phi}\arrow{d} & \Hom_{\mc H}(\Phi X,\Phi Y) \arrow{d} \\
\Hom_{\mc S|_q}(Y[-1]\otimes X,\emptyset) \arrow{r}{\Phi} & \Hom_{\mc H}(\Phi Y[-1]\otimes \Phi X,0)
\end{tikzcd}
\end{equation}
where the left vertical arrow is the isomorphism~\eqref{tangle_bend_iso}, mapping $L$ to $\beta_Y\circ(1_{Y[-1]}\otimes L)$, and the right vertical arrow is the isomorphism~\eqref{catH_cone} mapping $(d_V,f,d_W)$ to $(\mathrm{Cone}(f)[-1],0,0)$, commutes.
\end{coro}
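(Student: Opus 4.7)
The plan is to reduce the corollary to a short diagram chase that combines three ingredients already established: the monoidal functoriality of $\Phi$ (Proposition~\ref{prop_phi}), the algebraic formula for the cone map in Lemma~\ref{lem_Hcone}, and the just-proved identity $\Phi(\beta_Y) = \beta_{\Phi Y}$.

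First I would rewrite both vertical arrows so that they have the \emph{same} shape. The left vertical arrow is by definition $L \mapsto \beta_Y \circ (1_{Y[-1]} \otimes L)$, as stated in Subsection~\ref{subsec_bend}. For the right vertical arrow, the content of Lemma~\ref{lem_Hcone} is precisely that
\[
\left(\begin{bmatrix} -d_W & f \\ 0 & d_V\end{bmatrix},0,0\right) \;=\; \beta_{\Phi Y} \circ\bigl(1_{\Phi Y[-1]} \otimes (d_V,f,d_W)\bigr),
\]
so the right vertical arrow is $\alpha \mapsto \beta_{\Phi Y}\circ (1_{\Phi Y[-1]} \otimes \alpha)$ with $\alpha = \Phi(L)$. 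Thus both vertical maps have the form "tensor on the left with the identity of the shifted object, then compose with $\beta$."

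Next I would chase an element $L \in \Hom_{\mc S|_q}(X,Y)$ around the square. Going down then right gives
\[
\Phi\bigl(\beta_Y \circ (1_{Y[-1]} \otimes L)\bigr) \;=\; \Phi(\beta_Y) \circ \bigl(\Phi(1_{Y[-1]}) \otimes \Phi(L)\bigr),
\]
where I use that $\Phi$ is a monoidal functor (Proposition~\ref{prop_phi}) to break the composition and tensor product apart. Since $\Phi$ preserves identities and commutes with shift by the very definition of $\Phi$ on objects (a shift of a graded subset of $\RR$ maps to the corresponding shift of graded vector space), we have $\Phi(1_{Y[-1]}) = 1_{\Phi(Y)[-1]}$. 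Applying the preceding proposition $\Phi(\beta_Y) = \beta_{\Phi Y}$ then identifies this with $\beta_{\Phi Y}\circ (1_{\Phi Y[-1]} \otimes \Phi(L))$, which by Lemma~\ref{lem_Hcone} is exactly the result of going right then down.

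There is essentially no obstacle at this stage: every nontrivial ingredient has been proved. The "hard part" of the corollary has been absorbed into the preceding proposition $\Phi(\beta_Y)=\beta_{\Phi Y}$, whose proof relies on Lemma~\ref{lem_beta_rulings} and Proposition~\ref{prop_phi_rulings}. Once that identification is in hand, the commutativity of~\eqref{one_two_sided_diagram} is a formal consequence of the monoidal structure.
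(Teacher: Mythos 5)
Your proof is correct and follows exactly the paper's route: the paper's own argument is the one-line observation that the result follows from $\Phi(\beta_Y)=\beta_{\Phi Y}$ together with Lemma~\ref{lem_Hcone}, which rewrites the right vertical arrow as $\alpha\mapsto\beta_{\Phi Y}\circ(1_{\Phi Y[-1]}\otimes\alpha)$. You have simply spelled out the monoidality and identity/shift-preservation steps that the paper leaves implicit.
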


\begin{proof}
Follows from the previous proposition and Lemma~\ref{lem_Hcone} which says that the right vertical arrow is also given by $f\mapsto \beta_{\Phi Y}\circ(1_{\Phi Y[-1]}\otimes f)$.
\end{proof}

\begin{proof}[Proof of Theorem~\ref{thm_phi_equivalence}]
$\Phi$ is essentially surjective, since to any graded vector space with flag $V\in\mathrm{Ob}(\mc H)$ we can assign $\Phi^{-1}V\in\mathrm{Ob}(\mc S)$ where $\Phi^{-1}V:=\{1,\ldots,\dim V\}$ with grading $\deg(i)=\deg F_iV/F_{i-1}V$ as before.

For $X,Y\in\mathrm{Ob}(\mc S)$ we need to show that $\Phi$ gives an isomorphism from $\Hom_{\mc S|_q}(X,Y)$ to $\Hom_{\mc H}(\Phi X,\Phi Y)$.
By Corollary~\ref{cor_2to1commutes} it suffices to consider the special case $Y=\emptyset$, but then the claim follows from Proposition~\ref{prop_nu_iso} and Proposition~\ref{prop_phi_rulings}.
\end{proof}

\appendix

\section{Quiver representations}
\label{sec_qrep}

In this section we provide an explicit dg-model for the derived category of representations of a quiver $Q$.
This particular model has the virtue of having small $\Hom$-complexes and comes from computing $\Ext^\bullet(E,F)$ by replacing $E$ by its minimal projective resolution.
It is well-known to experts, though we could not find a suitable reference.
The dg-category will be used as a starting point for the categories defined in the following section.

Fix a (finite) quiver $Q$ with set of vertices $Q_0$ and an arbitrary field $\mathbf k$.
Define a dg-category $\mc D(Q)$ of complexes of quiver representations.
An object of $\mc D(Q)$ is given by a chain complex $(E_i,d)$ (over $\mathbf k$) for each vertex $i\in Q_0$ and a chain map $T_\alpha:E_i\to E_j$ of degree 0 for each arrow $\alpha:i\to j$.
Given a pair of objects $E=\left((E_i)_i,(S_\alpha)_\alpha\right)$ and $F=\left((F_i)_i,(T_\alpha)_\alpha\right)$ let
\begin{equation*}
\Hom_{\mc D(Q)}(E,F):=\bigoplus_{i\in Q_0}\Hom(E_i,F_i)\oplus\bigoplus_{i\xrightarrow{\alpha}j}\Hom(E_i[1],F_j)
\end{equation*}
where $\Hom(E_i,F_i)=\bigoplus_k\Hom^k(E_i,F_i)$ includes homogeneous maps of all degrees.
The differential is given by
\begin{align*}
d\left((f_i)_i,(g_\alpha)_\alpha\right):&=\left( \left(d\circ f_i-(-1)^{|f_i|}f_i\circ d\right)_i, \right. \\
 &\quad\quad\left.\left((-1)^{|f_i|}T_\alpha\circ f_i-(-1)^{|f_j|}f_j\circ S_\alpha+d\circ g_\alpha+(-1)^{|g_\alpha|}g_\alpha\circ d\right)_\alpha\right) \nonumber
\end{align*}
where $\alpha:i\to j$.
Composition is defined by
\begin{equation*}
\left((f_i)_i,(g_\alpha)_\alpha\right)\circ\left((f_i')_i,(g_\alpha')_\alpha\right)=\left((f_i\circ f_i')_i,(f_j\circ g_\alpha+(-1)^{|f_i|}g_\alpha\circ f_i)\right).
\end{equation*}

The proof of the following proposition is a straightforward checking of signs and will be omitted.

\begin{prop}
$\mc D(Q)$ is a dg-category.
\end{prop}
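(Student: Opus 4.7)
The plan is to verify the three defining axioms of a dg-category: (i) the proposed differential $d$ squares to zero on each $\Hom$-complex, (ii) composition is associative, and (iii) $d$ satisfies the graded Leibniz rule $d(f \circ g) = d(f) \circ g + (-1)^{|f|} f \circ d(g)$. Existence of identities is immediate: for each object $E = ((E_i)_i, (S_\alpha)_\alpha)$ the element $\mathrm{id}_E := ((\mathrm{id}_{E_i})_i, (0)_\alpha)$ is closed, because each $\mathrm{id}_{E_i}$ commutes with $d$ and because $S_\alpha \circ \mathrm{id}_{E_i} - \mathrm{id}_{E_j} \circ S_\alpha = 0$, and it is a two-sided unit by inspection of the composition formula.

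First I would verify $d^2 = 0$ componentwise. On the vertex component $\Hom(E_i, F_i)$, the differential is just the standard chain-complex commutator $[d, -]$, whose square vanishes because $d^2 = 0$ on $E_i$ and $F_i$. On the arrow component $\Hom(E_i[1], F_j)$ attached to $\alpha : i \to j$, applying $d$ twice produces three families of terms: pure $[d, [d, -]]$ contributions which vanish as above; terms involving $T_\alpha$ and $S_\alpha$ composed with $d$, which cancel because $T_\alpha, S_\alpha$ are chain maps (so $d \circ T_\alpha = T_\alpha \circ d$ and likewise for $S_\alpha$); and the cross terms arising from applying $d$ to the vertex-component output of the first $d$, which cancel against the newly generated $T_\alpha \circ f_i$ and $f_j \circ S_\alpha$ pieces in pairs, after careful sign tracking.

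Next, associativity of composition reduces componentwise to a Koszul sign computation: on the vertex part it is ordinary associativity of composition of graded maps between chain complexes, and on the arrow part both $(h_3 \circ h_2) \circ h_1$ and $h_3 \circ (h_2 \circ h_1)$ unfold to the same sum of three terms, each placing the arrow-component factor $g_\alpha$ in one of the three positions, with matching signs. The graded Leibniz rule is handled by the same componentwise expansion; on the arrow piece, the newly generated $T_\alpha$- and $S_\alpha$-terms from $d(h_2 \circ h_1)$ pair up precisely with the corresponding terms coming from $d(h_2) \circ h_1$ and $(-1)^{|h_2|} h_2 \circ d(h_1)$.

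The main obstacle is pure sign bookkeeping: the degree $|g_\alpha|$ of an element of $\Hom(E_i[1], F_j)$ in the grading of $\mc D(Q)$ differs by one from its degree as a map of the underlying unshifted complexes, so one must be scrupulous about which convention controls each sign at each step. A conceptually cleaner alternative would be to identify $\mc D(Q)$ with a full subcategory of the dg-category of one-sided twisted complexes over the path dg-algebra of $Q$ (concentrated in degree zero), in which case the three axioms are inherited automatically; but the direct verification is elementary once the shift convention is consistently interpreted, which is why the author defers the computation.
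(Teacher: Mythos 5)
Your proposal is correct and takes essentially the same approach as the paper: the paper declares the proof a straightforward checking of signs and omits it, and your componentwise verification of $d^2=0$, the graded Leibniz rule, associativity, and units — using that the structure maps $S_\alpha,T_\alpha$ are chain maps and tracking the shift $\Hom(E_i[1],F_j)$ in the signs — is exactly that computation. The alternative you mention (embedding $\mc D(Q)$ into twisted complexes over the path algebra of $Q$) is a valid shortcut but not what the paper does.
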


\section{Counting in higher categories}
\label{sec_hcard}

A good definition of the cardinality of an essentially finite groupoid $\mathcal G$ is
\begin{equation*}
\sum_{X\in\mathrm{Iso}(\mc G)}\frac{1}{|\Aut(X)|}
\end{equation*}
where $\mathrm{Iso}(\mc G)=\Ob(\mc G)/\cong$ is the set of isomorphism classes of objects in $\mc G$ and $\Aut(X)=\Hom_{\mc G}(X,X)$.
This idea was generalized to $\infty$-groupoids, aka homotopy types, by Baez--Dolan~\cite{baez_dolan} and we will apply it to dg-categories following Toen~\cite{toen_derived_hall}.

Let $\mc C$ be a dg-category over a finite field $\FF_q$ such that for each $X,Y\in\mathrm{Ob}(\mathcal C)$ all $\mathrm{Ext}^i(X,Y):=H^i(\Hom^\bullet_{\mc C}(X,Y))$ are finite-dimensional and vanish for $i\ll 0$.
In this situation one can define the \textbf{counting measure}, $\mu_\#$, on the set of isomorphism classes $\mathrm{Iso}(\mathcal C)$ by
\begin{equation*}
\mu_\#(\{X\}):=\left|\mathrm{Aut}(X)\right|^{-1}\prod_{i=1}^\infty\left|\mathrm{Ext}^{-i}(X,X)\right|^{(-1)^{i+1}}.
\end{equation*}
The set $\QQ\mathrm{Iso}(\mathcal C)$ of finite $\QQ$-linear combinations of elements in $\mathrm{Iso}(\mc C)$ has two interpretations: one as finitely supported functions on $\mathrm{Iso}(\mathcal C)$ and another as finite signed measures on $\mathrm{Iso}(\mathcal C)$.
We will adopt the latter here, following Kontsevich--Soibelman~\cite[Section 6.1]{ks}.
Note that Toen~\cite{toen_derived_hall} uses the former convention.
The conversion factor between the two is given by $\mu_\#$.

Suppose $F:\mathcal C\to\mathcal D$ is a dg-functor where $\mathcal C$ and $\mathcal D$ are $\FF_q$-linear and satisfy the finiteness conditions as in the previous paragraph.
Assume furthermore that $F$ sends only finitely many distinct $X\in\mathrm{Iso}(\mathcal C)$ to a given $Y\in\mathrm{Iso}(\mathcal D)$.
Define induced linear maps $F_*:\QQ\mathrm{Iso}(\mc C)\to \QQ\mathrm{Iso}(\mc D)$ and $F^!:\QQ\mathrm{Iso}(\mc D)\to \QQ\mathrm{Iso}(\mc C)$ by
\begin{gather*}
(F_*f)(Y):=\sum_{\substack{X\in\mathrm{Iso}(\mc C) \\ F(X)=Y}}f(X) \\
(F^!f)(X):=f(F(X))\frac{\left|\mathrm{Aut}(FX)\right|}{\left|\mathrm{Aut}(X)\right|}\prod_{i=1}^\infty\left(\frac{\left|\Ext^{-i}(FX,FX)\right|}{\left|\Ext^{-i}(X,X)\right|}\right)^{(-1)^i}.
\end{gather*}
The maps $F_*$, $F^!$ fit with the ``measures'' interpretation of $\QQ\mathrm{Iso}(\mc C)$.
For the ``functions'' interpretation it is more natural to use $F^*:\QQ\mathrm{Iso}(\mathcal D)\to \QQ\mathrm{Iso}(\mathcal C)$ and $F_!:\QQ\mathrm{Iso}(\mathcal C)\to \QQ\mathrm{Iso}(\mathcal D)$ defined by
\begin{gather*}
(F^*f)(X):=f(F(X)) \\
(F_!f)(Y):=\sum_{\substack{X\in\mathrm{Iso}(\mc C) \\ F(X)=Y}}f(X)\frac{\left|\Aut(Y)\right|}{\left|\Aut(X)\right|}\prod_{i=1}^\infty\left(\frac{\left|\Ext^{-i}(Y,Y)\right|}{\left|\Ext^{-i}(X,X)\right|}\right)^{(-1)^i}
\end{gather*}
so that
\begin{equation*}
F^!(f\mu_\#)=(F^*f)\mu_\#,\qquad F_*(f\mu_\#)=(F_!f)\mu_\#.
\end{equation*}

There is a simpler formula for $F^!$ for a special class of functors which was proven in \cite{haiden_skeinhall}.
More precisely, in addition to the finiteness conditions already imposed, we will assume that:
\begin{enumerate}[1)]
\item 
$F$ is full at the chain level, i.e. the maps $\Hom^\bullet_{\mc C}(X,Y)\to\Hom^\bullet_{\mc D}(FX,FY)$ are surjective.
\item
$F$ has the isomorphism lifting property: Given an isomorphism $f:FX\to Y$ in $\mc D$ there is an object $\widetilde{Y}\in\Ob(\mc C)$ with $F\widetilde Y=Y$ and an isomorphism $\tilde f:X\to \widetilde Y$ with $F(\tilde f)=f$.
\item
$F$ reflects isomorphisms: If $F(f):FX\to FY$ is an isomorphism then $f$ is an isomorphism.
\end{enumerate}
(Here an \textit{isomorphism} is a map which is invertible up to homotopy.)
By the first assumption on $F$ we have an exact sequence of cochain complexes
\begin{equation*}
0\longrightarrow K^\bullet(X,Y)\longrightarrow \Hom^\bullet_{\mc C}(X,Y)\longrightarrow\Hom^\bullet_{\mc D}(FX,FY)\longrightarrow 0
\end{equation*}
for each $X,Y\in\Ob(\mc C)$, where 
\begin{equation*}
K^i(X,Y):=\mathrm{Ker}\left(\Hom^i_{\mc C}(X,Y)\to\Hom^i_{\mc D}(FX,FY)\right)
\end{equation*}
and thus long exact sequences
\begin{equation}\label{full_functor_les}
\ldots\longrightarrow HK^i(X,Y)\longrightarrow \Ext^i_{\mc C}(X,Y)\longrightarrow\Ext^i_{\mc D}(FX,FY)\longrightarrow\ldots.
\end{equation}
Given $Y\in\mc D$ let $F_Y$ be the set of equivalence classes of objects $X\in\mc C$ with $FX=Y$ where $X\sim X'$ if there is an isomorphism $f:X\to X'$ with $F(f)=1_Y$ in $\Hom^0(Y,Y)$ (equivalently: $F(f)=1_Y$ in $\Ext^0(Y,Y)$).

\begin{lemma}
\label{lem_upper_shriek_formula}
Let $F:\mc C\to\mc D$ be an dg-functor satisfying the above conditions, then
\begin{equation*}
F^!(Y)=\sum_{X\in F_Y}\prod_{i=0}^\infty\left|HK^{-i}(X,X)\right|^{(-1)^{i+1}}X
\end{equation*}
for any $Y\in\QQ\mathrm{Iso}(\mc D)$.
\end{lemma}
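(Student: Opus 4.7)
The plan is to rewrite $F^!(Y)$ in two stages: first reparametrize the defining sum (over isomorphism classes $[X]$ with $FX\cong Y$) by the set $F_Y$ of lifts-up-to-$F$-trivial-isomorphism, using the isomorphism lifting property; then identify the resulting automorphism factor with $\ker(\Ext^0_{\mc C}(X,X)\to\Ext^0_{\mc D}(Y,Y))$, whose cardinality can be read off from the long exact sequence \eqref{full_functor_les}.

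Unwinding the definition, since $Y$ as a measure evaluates to $1$ on $FX$ iff $FX\cong Y$, one has
\[
F^!(Y)=\sum_{[X]\,:\,FX\cong Y}\frac{|\Aut(Y)|}{|\Aut(X)|}\prod_{i\geq 1}\left(\frac{|\Ext^{-i}(Y,Y)|}{|\Ext^{-i}(X,X)|}\right)^{(-1)^i}X.
\]
Introduce two auxiliary groupoids: $\widetilde{\mc G}$, whose objects are $X\in\Ob(\mc C)$ with $FX=Y$ and whose morphisms are all isomorphisms in $\mc C$; and its subgroupoid $\mc G_Y$ obtained by additionally requiring the $F$-image of each isomorphism to equal $1_Y$ in $\Ext^0$. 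The isomorphism lifting property gives $\pi_0(\widetilde{\mc G})=\{[X]\,:\,FX\cong Y\}$, while by construction $\pi_0(\mc G_Y)=F_Y$. The functor $F$ induces a fibration of groupoids $\mc G_Y\to\widetilde{\mc G}\to B\Aut(Y)$, and multiplicativity of groupoid cardinality under this fibration yields
\[
\sum_{[X]\,:\,FX\cong Y}\frac{|\Aut(Y)|}{|\Aut(X)|}\,X \;=\; \sum_{X\in F_Y}\frac{1}{|\Aut_{\mc G_Y}(X)|}\,X.
\]

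Next, identify $\Aut_{\mc G_Y}(X)$ as the preimage of $1_Y$ under $\Aut(X)\to\Aut(Y)$. Because $F$ reflects isomorphisms, any $f\in\Ext^0(X,X)$ with $F(f)=1_Y$ is automatically a unit, so this preimage has the same cardinality as $\ker(\Ext^0_{\mc C}(X,X)\to\Ext^0_{\mc D}(Y,Y))$. Chain-level fullness of $F$ produces the short exact sequence of $\Hom$-complexes and hence the long exact sequence \eqref{full_functor_les}; telescoping the alternating cardinalities on the segment ending at this kernel gives
\[
|\Aut_{\mc G_Y}(X)|=\prod_{i\geq 0}|HK^{-i}(X,X)|^{(-1)^i}\cdot\prod_{i\geq 1}\left(\frac{|\Ext^{-i}_{\mc C}(X,X)|}{|\Ext^{-i}_{\mc D}(Y,Y)|}\right)^{(-1)^{i+1}}.
\]
Substituting this into the previous display, the $\Ext^{-i}$ factors cancel exactly against those in the definition of $F^!(Y)$, leaving $\sum_{X\in F_Y}\prod_{i\geq 0}|HK^{-i}(X,X)|^{(-1)^{i+1}}X$.

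The main obstacle is the sign bookkeeping in the telescoping LES, together with ensuring convergence of the infinite products; the latter follows from the standing vanishing of $\Ext^i$ for $i\ll 0$, which forces the same for $HK^i$ via \eqref{full_functor_les}. A secondary point deserving care is the verification that $F\colon\widetilde{\mc G}\to B\Aut(Y)$ really behaves as a fibration of groupoids (so that the cardinality multiplicativity applies), but this follows routinely from the three hypotheses on $F$: isomorphism lifting handles essential surjectivity onto $B\Aut(Y)$, chain-level fullness handles surjectivity of $F$ on $\Aut$-groups modulo kernel, and reflection of isomorphisms handles the identification of kernels described above.
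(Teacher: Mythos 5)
Your argument is correct, and in fact the paper itself contains no proof of this lemma to compare against: it simply defers to \cite{haiden_skeinhall}, where the statement is proven for $A_\infty$-categories. So what you have written is a legitimate self-contained proof. The three steps all check out: (i) the orbit--stabilizer/fibration step, which amounts to the identity $\sum_{X\in F_Y,\,X\cong X_0}|\Aut_{\mc G_Y}(X)|^{-1}=|\Aut(Y)|/|\Aut(X_0)|$ for each isomorphism class $[X_0]$ with $FX_0\cong Y$ (this can also be proven directly by noting that the classes in $F_Y$ lying in a fixed isomorphism class biject with cosets of $F(\Aut(X_0))$ in $\Aut(Y)$, and that the kernels $\Aut_{\mc G_Y}(X)$ are conjugate, hence of equal size); (ii) the identification $|\Aut_{\mc G_Y}(X)|=|\ker(\Ext^0_{\mc C}(X,X)\to\Ext^0_{\mc D}(Y,Y))|$ via $u\mapsto u-1_X$, where reflection of isomorphisms is exactly what guarantees every class over $1_Y$ is invertible; and (iii) the telescoping of the long exact sequence \eqref{full_functor_les} ending at this kernel, whose signs I have verified agree with your displayed formula and cancel precisely against the $\Ext^{-i}$ ratios in the definition of $F^!$, using that $|\Ext^{-i}(X,X)|$ and $|HK^{-i}(X,X)|$ depend only on the isomorphism class of $X$ among lifts of $Y$. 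One small correction of attribution: the fibration property of $\widetilde{\mc G}\to B\Aut(Y)$ follows from the isomorphism lifting property alone (lift a chain-level representative of a given class in $\Aut(Y)$); chain-level fullness is not needed there, but it is needed for the short exact sequence of $\Hom$-complexes producing \eqref{full_functor_les}, and it is what makes the strict and up-to-homotopy versions of the relation defining $F_Y$ agree, so all three hypotheses are indeed used, just in slightly different places than you indicate. This does not affect the validity of the proof.
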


See~\cite{haiden_skeinhall} for the proof in the more general case of $A_\infty$-categories.

\subsection{Hall algebra}
\label{subsec_hall}

The monoidal product in the category $\mc H$ turns out to be a special case of the Hall algebra product for dg-categories, which in particular implies its associativity by general results which we recall in this subsection.

Let $\mc C$ be a dg-category over a finite field $\FF_q$ satisfying the finiteness conditions as above. 
Assume furthermore that $\mc C$ is triangulated, i.e. closed under shifts, cones, and has a zero object.
Then we have a diagram of categories and functors
\begin{equation*}
\begin{tikzcd}
 & \mc C^{\mc A_2} \arrow[dl,"F"']\arrow[dr,"G"] & \\
\mc C\times \mc C & & \mc C
\end{tikzcd}
\end{equation*}
where $\mc C^{\mc A_2}$ is the category of exact triangles in $\mc C$, whose objects can be concretely represented by twisted complexes $C\xrightarrow{\delta} A$, $\delta\in\Hom^1(C,A)$, $m_1(\delta)=0$, which $F$ sends to the pair $(A,C)$ and $G$ sends to $\mathrm{Cone}(C[-1]\xrightarrow{\delta} A)$.
Passing to $\QQ\mathrm{Iso}(\mc C)$, the pull--push along the diagram gives a product map
\begin{equation*}
\QQ\mathrm{Iso}(\mc C)\otimes \QQ\mathrm{Iso}(\mc C)\xrightarrow{\quad F^!\quad }\QQ\mathrm{Iso}(\mc C^{\mc A_2})\xrightarrow{\quad G_*\quad}\QQ\mathrm{Iso}(\mc C).
\end{equation*} 

Lemma~\ref{lem_upper_shriek_formula} provides the following explicit formula for the product.
\begin{equation}\label{hall_product}
A\cdot C=\left(\prod_{i=0}^\infty\left|\Ext^{-i}(C,A)\right|^{(-1)^{i+1}}\right)\sum_{f\in\Ext^1(C,A)}\mathrm{Cone}(C[-1]\xrightarrow{f} A)
\end{equation}
The vector space $\QQ\mathrm{Iso}(\mc C)$ together with this product is called the \textbf{Hall algebra} of $\mc C$, denoted $\mathrm{Hall}(\mc C)$.
This is an associative algebra, see \cite{toen_derived_hall} or \cite{haiden_skeinhall} for a short proof, with unit the class of the zero object $0\in\Ob(\mc C)$.

\subsection{Deriving the formula for composition in $\mc H$}
\label{subsec_comp_conceptual}

Fix an arbitrary field $\mathbf k$.
We begin by defining dg-categories $\mc F_n$ for each integer $n\geq 1$, whose objects are roughly speaking given by an object of $\mathrm{Perf}(\mathbf k)$ with $n$ complete flags, in the sense of triangulated categories, on it.
Formally, an object of $\mc F_n$ is given by $n$ chain complexes with complete flags $(C_i,d,F_\bullet C_i)$, $i=1,\ldots,n$ and quasi-isomorphisms $\phi_i:C_i\to C_{i+1}$, $i=1,\ldots,n-1$.
Forgetting the flags, the $C_i$ and $\phi_i$ given an object in $\mc D(A_n)$, where $A_n$ is the quiver with underlying graph the $A_n$ Dynkin diagram and all arrows oriented in the same direction.
A morphism of degree $k$ in $\mc D(A_n)$ has components $f_i:C_i\to D_i$, $i=1,\ldots,n$, of degree $k$ and $g_i:C_i\to D_i$, $i=1,\ldots,n-1$ of degree $k-1$ fitting into a diagram
\[
\begin{tikzcd}
C_1 \arrow{r}{\phi_1}\arrow{d}{f_1}\arrow{dr}{g_1} & C_2 \arrow{r}{\phi_2}\arrow{d}{f_2}\arrow{dr}{g_2} & \cdots\arrow{r}{\phi_{n-1}}\arrow{dr}{g_{n-1}} & C_n \arrow{d}{f_n} \\
D_1 \arrow{r}{\psi_1} & D_2 \arrow{r}{\psi_2} & \cdots\arrow{r}{\psi_{n-1}} & D_n
\end{tikzcd}
\]
were for a closed morphism each square commutes up to chain homotopy given by $g_i$.
We define morphisms in $\mc F_n$ to be the subset of those morphisms in $\mc D(A_n)$ such that $f_i(F_jC_i)\subseteq F_jD_i$.
The differential and composition in $\mc F_n$ are defined as for $\mc D(A_n)$.

Assume from now on that $\mathbf k=\FF_q$ is a finite field.
To make contact with the definition of $\mc H$ consider for given $V_1,\ldots,V_n\in\Ob(\mc H)$ the full subcategory $\mc F_n(V_1,\ldots,V_n)\subset \mc F_n$ of objects with $(C_i,F_\bullet C_i)=V_i$, then 
\[
\mathrm{Iso}(\mc F_2(V,W))=\mc B(V,W),\qquad \QQ\mathrm{Iso}(\mc F_2(V,W))=\Hom_{\mc H}(V,W)
\]
essentially by definition.
For a triple $U,V,W\in\Ob(\mc H)$ there is a diagram of functors
\[
\begin{tikzcd}
 & \mc F_3(U,V,W)\arrow{dl}[swap]{P}\arrow{dr}{Q} &  \\
\mc F_2(V,W)\times \mc F_2(U,V) & & \mc F_2(U,W)           
\end{tikzcd}
\]
where
\begin{gather*}
P\left(C_1\xrightarrow{\phi_1} C_2\xrightarrow{\phi_2}C_3\right):=\left(C_2\xrightarrow{\phi_2}C_3,C_1\xrightarrow{\phi_1} C_2\right) \\
 Q\left(C_1\xrightarrow{\phi_1} C_2\xrightarrow{\phi_2}C_3\right):=C_1\xrightarrow{\phi_2\circ\phi_1}C_3
\end{gather*}
and the pull-push along the diagram gives a map
\[
\QQ\mathrm{Iso}(\mc F_2(V,W))\otimes \QQ\mathrm{Iso}(\mc F_2(U,V))\xrightarrow{P^!} \QQ\mathrm{Iso}(\mc F_3(U,V,W))\xrightarrow{Q_*}\QQ\mathrm{Iso}(\mc F_2(U,W))
\]
which, by Lemma~\ref{lem_upper_shriek_formula} has the explicit formula \eqref{h_comp} which is the composition in $\mc H$.

\bibliographystyle{alpha}
\bibliography{skeinhall}

\Addresses

\end{document}